\newcolumntype{L}[1]{>{\raggedright\let\newline\\\arraybackslash\hspace{0pt}}m{#1}}
\newcolumntype{C}[1]{>{\centering\let\newline\\\arraybackslash\hspace{0pt}}m{#1}}
\newcolumntype{R}[1]{>{\raggedleft\let\newline\\\arraybackslash\hspace{0pt}}m{#1}}
\newcommand{\mtx}[1]{\boldsymbol{#1}}
\newcommand{\mvec}[1]{\boldsymbol{#1}}
\newcommand\Item[1][]{%
	\ifx\relax#1\relax  \item \else \item[#1] \fi
	\abovedisplayskip=0pt\abovedisplayshortskip=0pt~\vspace*{-\baselineskip}}
\newcounter{algorithmicH}
\let\oldalgorithmic\algorithmic
\renewcommand{\algorithmic}{%
	\stepcounter{algorithmicH}
	\oldalgorithmic}
\renewcommand{\theHALG@line}{ALG@line.\thealgorithmicH.\arabic{ALG@line}}
\def\th@plain{%
	\thm@notefont{}
	\itshape 
}
\def\th@definition{%
	\thm@notefont{}
	\normalfont 
}
\theoremstyle{plain}
\newtheorem{theorem}{Theorem}[section]
\newtheorem{lemma}[theorem]{Lemma}
\newtheorem{corollary}[theorem]{Corollary}
\newtheorem{assump}[theorem]{Assumption}
\theoremstyle{definition}
\newtheorem{example}[theorem]{Example}
\newtheorem{definition}[theorem]{Definition}
\newtheorem*{rem*}{Remark}
\newtheorem*{warning*}{Warning}
\newtheorem{rem}[theorem]{Remark}
\DeclareMathOperator{\rank}{rank}
\DeclareMathOperator{\range}{range}
\DeclareMathOperator{\prob}{\mathbb{P}}
\DeclareMathOperator*{\argmin}{\arg\!\min}
\DeclareMathOperator{\vc}{vec}
\NewDocumentCommand{\ceil}{s O{} m}{%
	\IfBooleanTF{#1} 
	{\left\lceil#3\right\rceil} 
	{#2\lceil#3#2\rceil} 
}
\newsavebox\CBox
\newcommand{\subalign}[2][c]{%
	\if#1c\vcenter\else\vtop\fi{%
		\Let@ \restore@math@cr \default@tag
		\baselineskip\fontdimen10 \scriptfont\tw@
		\advance\baselineskip\fontdimen12 \scriptfont\tw@
		\lineskip\thr@@\fontdimen8 \scriptfont\thr@@
		\lineskiplimit\lineskip
		\ialign{\hfil$\m@th\scriptstyle##$&$\m@th\scriptstyle{}##$\hfil\crcr
			#2\crcr
		}%
	}%
}
\renewcommand*{\@fnsymbol}[1]{\ensuremath{\ifcase#1\or *\or \ddagger\or \mathsection\or \vee\or \wedge\or \dagger\or
		\mathsection\or \mathparagraph\or \|\or **\or \dagger\dagger
		\or \ddagger\ddagger \else\@ctrerr\fi}}
\numberwithin{equation}{section}
\begin{document}
\title{A dimensionality reduction technique for unconstrained global optimization of functions with low effective dimensionality\thanks{This work was supported by The Alan Turing Institute under The Engineering and Physical Sciences Research Council (EPSRC) grant EP/N510129/1}}

\author{
	Coralia Cartis \thanks{The Alan Turing Institute, The British Library, London, NW1 2DB, UK} \textsuperscript{\normalfont,}\thanks{Mathematical Institute, University of Oxford, Radcliffe Observatory Quarter, Woodstock Road,
		Oxford, OX2 6GG, UK; \texttt{cartis,otemissov@maths.ox.ac.uk}}
	\and
	Adilet Otemissov \footnotemark[2] \textsuperscript{\normalfont,}\footnotemark[3] 
}

\date{16th January 2020}
\maketitle
\footnotesep=0.4cm
{\small
	\begin{abstract}
		We investigate the unconstrained global optimization of functions with low effective dimensionality, that are constant along certain (unknown) linear subspaces. Extending the technique of random subspace embeddings in [Wang et al., Bayesian optimization in a billion dimensions via random embeddings. \textit{JAIR}, 55(1): 361--387, 2016], we study a generic Random Embeddings for Global Optimization (REGO) framework that is compatible with any global minimization algorithm. Instead of the original, potentially large-scale optimization problem, within REGO, a Gaussian random, low-dimensional problem with bound constraints is formulated and solved in a reduced space. We provide  novel probabilistic bounds for the success of REGO in solving the  original, low effective-dimensionality problem, which show its independence of the (potentially large) ambient dimension and its precise dependence on the dimensions of the effective and randomly embedding subspaces. These results significantly improve existing theoretical analyses by providing the exact distribution of a reduced minimizer and its Euclidean norm and by the general assumptions required on the problem. We validate our theoretical findings by extensive numerical testing of REGO with three types of global optimization solvers, illustrating the improved scalability of REGO compared to the full-dimensional application of the respective solvers.
	\end{abstract}
	
	\bigskip
	
	\begin{center}
		\textbf{Keywords:}
		global optimization, random matrix theory, dimensionality reduction techniques, functions with low effective dimensionality
	\end{center}
}

\maketitle               

\section{Introduction}
In this paper, we address the unconstrained global optimization problem 
\begin{equation}
\tag{P}
\begin{aligned} \label{eq: GO}
\min_{\mvec{x}\in \mathbb{R}^D} & \;\; f(\mvec{x}), \\
\end{aligned}
\end{equation}
where $f: \mathbb{R}^D \rightarrow \mathbb{R}$ is a real-valued continuous, possibly non-convex, deterministic, function defined on the whole $\mathbb{R}^D$. We assume that there exists $\mvec{x}^* \in \mathbb{R}^D$ such that $\min_{\mvec{x} \in \mathbb{R}^D} f(\mvec{x}) = f(\mvec{x}^*) = f^*$. This implies that $f$ is bounded below, namely, $f^* > - \infty$, and that the minimum in \eqref{eq: GO} is attained (not all minimizers are at infinity).

To alleviate the curse of dimensionality, we further restrict ourselves to a particular class of functions whose true (intrinsic) dimension is much less than the ambient problem dimension. These functions are constant along certain linear subspaces, which may not necessarily be aligned with the standard axes. In literature, these functions are known under different names: functions with `\textit{low effective dimensionality}' \cite{Wang2016}, functions with `\textit{active subspaces}' \cite{Constantine2015} and `\textit{multi-ridge}' functions \cite{Fornasier2012, Tyagi2014}.
They have been found in a number of applications mainly related to parameter studies. In hyper-parameter optimization for neural networks \cite{Bergstra2012} and heuristic algorithms for combinatorial optimization problems \cite{Hutter2014}, studies have shown that the respective objective functions are affected by only a few hyper-parameters while the many other input hyper-parameters are redundant. Similarly, in complex engineering and physical simulation problems \cite{Constantine2015}, such as in climate modelling \cite{Knight2007}, systems are modelled by several input parameters with only a small number of the parameters or a combination of them having a true effect on the system's behaviour.
\begin{figure}[!t]
	\centering
	\includegraphics[scale = 0.4]{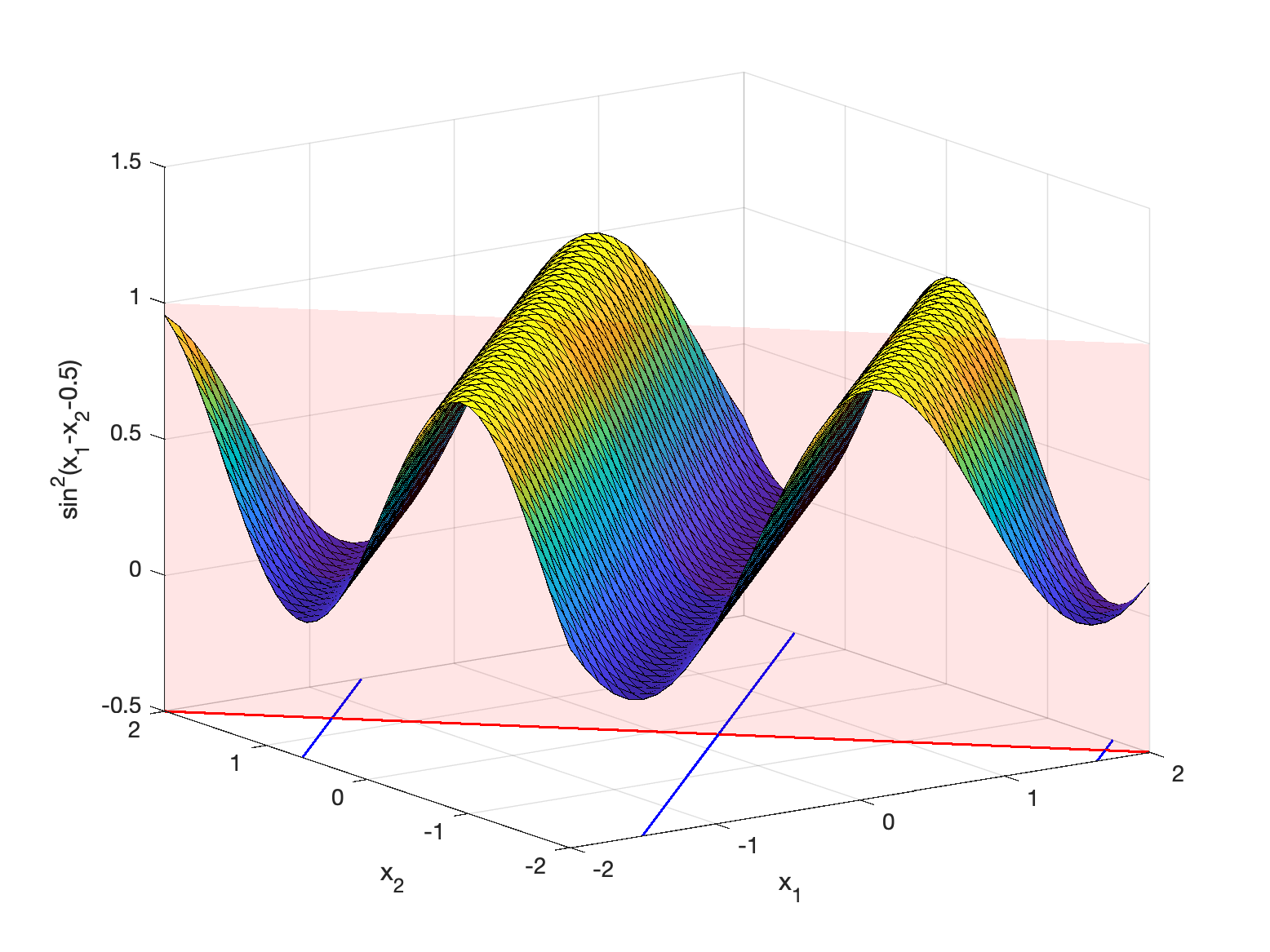}
	\begin{tikzpicture}
	\node at (0,-2.8) {$\mathbb{R}^2$};
	\draw[step=0.34cm,gray,very thin,] (-2.5,-2.5) grid (2.5,2.5);
	\draw[line width = 1, color = blue] (-1.875, -2.5) -- (2.5,1.875);
	\draw[line width = 1, color = blue] (2.052, -2.5) -- (2.5,-2.052);
	\draw[line width = 1, color = blue] (-2.5, 0.802) -- (-0.802,2.5);
	
	\draw[line width = 1, color = red] (-2.5, 2.5) -- (2.5,-2.5);
	\filldraw (0,0) circle (1.2pt);
	\filldraw (0.3125,-0.3125) circle (1.2pt);
	\node at (-0.1, -0.25) {$0$};
	
	\filldraw (91/40,-91/40) circle (1.2pt);
	\filldraw (-33/20,+33/20) circle (1.2pt);
	\node at (-33/20, 1.2) {$\mvec{x}_{-1}^*$};
	\node at (0.3, -0.7) {$\mvec{x}_0^*$};
	\node at (1.8, -91/40) {$\mvec{x}_1^*$};
	\end{tikzpicture}
	\caption{The function in \eqref{eq: example} and its domain are plotted on the left-  and right-side, respectively. The red line  is the effective subspace $\mvec{x} = (1 \; -1)y$ and it intersects the blue lines of global minimizers at  $\mvec{x}_k^*$  defined in \Cref{ex: sin_function} for $k = -1,0,1$; these points also correspond to optimal solutions in the reduced space. }
	\label{fig: example_sin_function}		
\end{figure}
To clarify this concept, we give a simple example of a function with lower effective dimensionality.
\begin{example}\label{ex: sin_function}
	Consider the following optimization problem:
	\begin{equation} \label{eq: example}
	\begin{aligned}
	\min_{\mvec{x} \in \mathbb{R}^2} & \;\; f(\mvec{x}) = \sin^2(x_1-x_2-0.5). \\
	\end{aligned}
	\end{equation} 
	By solving $   f(\mvec{x}) = 0$, we find that the set of global minimizers is given by $\{ (1\;\; 1)^T t - (0 \;\; 0.5+\pi k)^T: t \in \mathbb{R}, k \in \mathbb{Z} \}$. For each fixed value of $k$, the set corresponds to a distinct line of global minimizers along which the function is constant (see \Cref{fig: example_sin_function}). The effective subspace\footnote{The effective subspace can be determined by considering the orthogonal complement of the constant subspace (along which $f$ does not vary), in this example, spanned by the vector $(1 \; 1)^T$.} of $f$ is $(x_1 \; x_2) = (1 \; -1)^T y$ for $y \in \mathbb{R}$. We substitute this in \eqref{eq: example} to obtain the reduced/lower-dimensional optimization problem  $\min_{y \in \mathbb{R}} \sin^2(2y-0.5)$, which has the same global minimum as \eqref{eq: example}, with global minimizers $y_k^* = \pi k/2 + 0.25$, $k \in \mathbb{Z}$. We recover the corresponding solutions to \eqref{eq: example} by setting $\mvec{x}_k^* = (1\;-1)^T y_k^*$ for $k \in \mathbb{Z}$.
\end{example}

As \Cref{ex: sin_function} illustrates, it is possible to cast \eqref{eq: GO} into a lower-dimensional problem which has the same global minimum $f^*$. This is straightforward when the effective subspace is known, but far less so in applications where $f$ is potentially black-box. When the effective subspace is unknown, it was proposed (in the context of Bayesian Optimization) in Wang et al. \cite{Wang2016} to use random embeddings. The proposed technique solves the following lower-dimensional optimization problem instead of directly tackling \eqref{eq: GO}:
\begin{equation} \label{eq: REGO}
\tag{RP}
\begin{aligned}
\min_{\mvec{y}}  &\;\; f(\mtx{A}\mvec{y})  \\
\text{subject to} &\;\; \mvec{y} \in \mathcal{Y} = [-\delta, \delta]^d, 
\end{aligned}
\end{equation}
where $\mtx{A}$ is a $D\times d$ Gaussian random matrix (see \Cref{def: Gaussian_matrix}) and $\mathcal{Y} = [-\delta, \delta]^d$ for some carefully chosen $\delta > 0$ and $d \ll D$. Note that, unlike \eqref{eq: GO}, \eqref{eq: REGO} has (box) constraints, which are typically imposed to make the approach practical (i.e. to avoid unrealistic searches over infinite domains).
\begin{definition} \label{def: successful_REGO}
	We say that \eqref{eq: REGO} is \textit{successful} if there exists $\mvec{y}^* \in \mathcal{Y}$ such that $f(\mtx{A}\mvec{y}^*) = f^*$.
\end{definition}

\paragraph{Related work.} The scalability challenges of Bayesian Optimization (BO) algorithms for generic black-box functions have  prompted research into improving efficiency of this class of methods for functions with special structure. 
 Different structural assumptions on the objective have been analysed for BO, such as  additivity or  (partial) separability, which assumes that the objective function can be represented as the sum of smaller-dimensional functions with non-overlapping variables \cite{Wang2018, Kandasamy2015, Li2016} or with overlapping ones \cite{Rolland2018}. 
 
 Another popular structural assumption is the above-mentioned low-effective dimensionality of the objective. In its simplest form, this considers the effective subspace to be aligned with the coordinate axes, which is equivalent to the presence of redundant variables \cite{Chen2012, BenSalem2019}. 
 More generally, the optimization of functions that are constant along {\it arbitrary} linear subspaces -- which, as mentioned above, is also the focus of this paper -- has been addressed using BO methods in  \cite{Djolonga2013, Wang2016, Garnett2014, Eriksson2018}, and extended to other problem and algorithm classes such as derivative-free optimization \cite{QianHuYu2016}, multi-objective optimization \cite{Qian2017} and evolutionary methods  \cite{Sanyang2016}. 
 Some proposals learn the effective subspace of the function beforehand (using for example, a low rank matrix recovery approach) \cite{Tyagi2014, Fornasier2012} and then optimize in the reduced subspace \cite{Djolonga2013, Eriksson2018}.  Alternating learning and optimization steps has also been proposed \cite{Garnett2014}, as well as bypassing learning and directly optimizing in randomly-chosen low-dimensional subspaces (provided an estimate of the effective dimension is known) \cite{Wang2016, Binois2014, Binois2017}. 
 
 For the latter, \citeauthor{Wang2016} \cite{Wang2016} developed the so-called REMBO algorithm, which is a BO framework for problem \eqref{eq: GO} with box constraints $x\in \mathcal{X}$ that uses  Gaussian random embeddings (namely, $\mtx{A}$ is a Gaussian random matrix)  to generate the reduced problem \eqref{eq: REGO}. They find that the size of $\mathcal{Y}$ is the primary factor in determining the success (or failure) of the reduced problem, and quantify the probability of success of \eqref{eq: REGO} for the case when the embedded dimension $d$ is equal to the effective one and the effective subspace is aligned with the coordinate axes (see \cite[Theorem 3]{Wang2016}).  A challenge of \eqref{eq: REGO} for BO with box constraints is that, even when \eqref{eq: REGO} is successful,  the high-dimensional image $\mtx{A}\mvec{y} \in \mathbb{R}^D$ of a point $\mvec{y} \in \mathcal{Y}$ may be outside the feasible set $\mathcal{X}$. For this reason, REMBO is equipped with a map $p_{\mathcal{X}}:\mathbb{R}^D \rightarrow \mathbb{R}^D$ that projects the image of the reduced solutions that fall outside $\mathcal{X}$ to the closest point on the boundary of $\mathcal{X}$. To model a Gaussian Process for the reduced problem,  \cite{Wang2016} proposes two kernels: a high-dimensional $k_{\mathcal{X}}$ and a low-dimensional $k_{\mathcal{Y}}$. Kernel $k_{\mathcal{X}}$ suffers from high-dimensionality as it constructs a GP in a $D$-dimensional space. The benefit of $k_{\mathcal{Y}}$ is that it constructs a GP in a $d$-dimensional subspace,  but this kernel over-explores regions in $\mathcal{Y}$ whose high-dimensional images outside $\mathcal{X}$ are mapped to the same points in $\mathcal{X}$ 
 through a non-injective $p_{\mathcal{X}}$.
 To remedy these issues, \citeauthor{Binois2014} \cite{Binois2014} propose a new kernel $k_{\Psi}$ which has the benefit of being low-dimensional while avoiding the over-exploratory tendency of $k_{\mathcal{Y}}$. In \cite{Binois2017}, \citeauthor{Binois2017} also propose a new mapping $\gamma$ (instead of $p_{\mathcal{X}}$) and define $\mathcal{Y}$ and new kernels based on this new mapping. 
 
  \citeauthor{Sanyang2016} \cite{Sanyang2016} develop REMEDA, which uses random embeddings  within an Evolutionary algorithm EDA. Their theoretical results on quantifying the size of $\mathcal{Y}$/the success of \eqref{eq: REGO} improve on those in \cite{Wang2016} and are 
 applicable for certain choices of $d$, greater than the effective subspace dimension; they also experiment with estimating the effective dimension numerically.  
 
  \citeauthor{QianHuYu2016} \cite{QianHuYu2016} extend the framework and some of the results in \citeauthor{Wang2016} \cite{Wang2016} to functions with approximate low effective subspaces, proposing the use of multiple random embeddings  within any derivative-free solver. They contrast the use of a single versus multiple embeddings on three test problems of varying dimensions and using three different types of derivative-free solvers (evolutionary, Bayesian and model-based). 
 
 Recently, in the context of Bayesian optimization, \citeauthor{Nayebi2019} \cite{Nayebi2019} use a different random ensemble based on hashing matrices to represent the embedded subspaces and define $\mathcal{Y}$  as $[-1,1]^d$; this formulation guarantees that the high-dimensional points are always inside $\mathcal{X}$ and, thus, their method avoids the feasibility corrections of REMBO.
 
 \paragraph{Our contributions.}
 We investigate a general random embeddings framework for unconstrained global optimization of functions with low effective dimensionality, where we allow the effective subspace of the objective function and its dimension (denoted by $d_e$) to be arbitrary (not necessarily aligned with coordinate axes and not limited in dimension by problem constants)\footnote{Note that, as problem \eqref{eq: GO} has no (bound) constraints, we do not need to use the projection operator $p_{\mathcal{X}}$ in \cite{Wang2016, Binois2014, Binois2017}. Of course,  this comes at the cost of our approach being unable to guarantee feasibility for the original problem if \eqref{eq: GO} does have constraints.}. This framework also allows the use of any global solver to solve the reduced problem. 
 
 We significantly extend and improve the theoretical analyses in \cite{Wang2016, Sanyang2016}, providing an in-depth investigation of the reduced problem \eqref{eq: REGO} when Gaussian random embeddings are used. In particular, while \cite{Wang2016, Sanyang2016} estimate the Euclidean norm of a (random) reduced minimizer, we derive its exact distribution, 
 using tools from random matrix theory.
 We show that this reduced minimizer, when appropriately scaled, follows the inverse chi-squared distribution  with $d-d_e+1$ degrees of freedom, where $d$ is the dimension of the random embedding (\Cref{thm: x^*_T/y^*_2_follow_chi_square}). Moreover, we derive the probability density function of this reduced minimizer (\Cref{thm: pdf_of_y^*}) by first proving that it follows a spherical distribution. These results imply that, under  certain assumptions, solving \eqref{eq: REGO} has no dependence on the ambient dimension $D$. Subsequently, \Cref{thm: prob_REGO_is_successful} and \Cref{cor: REGO_successful_lower_bound}  estimate the probability that \eqref{eq: REGO} is successful.  The latter result 
 extends  both \cite[Theorem 3]{Wang2016} and \cite[Theorem 2]{Sanyang2016} to arbitrary effective subspaces and any $d \geq d_e$, and establishes a notable and more precise  trade-off between the success of \eqref{eq: REGO}, $\delta$ (the size of the reduced domain $\mathcal{Y}$) and the embedding  dimension $d$; thus allowing us to choose appropriate values for these parameters in the algorithm. 
 Furthermore, we describe how to extend the main results to affine random embeddings (which draw random subspaces at any chosen (reference) point in $\mathbb{R}^D$), which indicate that the probability of success of \eqref{eq: REGO} is higher if the point of reference is closer to the set of global minimizers.
 	
Similarly to the algorithmic frameworks proposed in \cite{Wang2016, QianHuYu2016}, we propose REGO (Random Embeddings for Global Optimization) that solves a single randomly-embedded reduced problem \eqref{eq: REGO} instead of \eqref{eq: GO} and is compatible with any generic  global optimization solver\footnote{The REGO framework is defined for the unconstrained \eqref{eq: GO} but can also be helpful for constrained problems (for example, $\mvec{x} \in \mathcal{X}$), where the constraints are imposed just to avoid searches over an infinite domain and where minimizers outside the feasible domain are acceptable.}. We use and validate our theoretical results by
providing extensive numerical testing of REGO with three types of solvers for \eqref{eq: REGO}: DIRECT (Lipschitz-optimization), BARON (branch and bound), and KNITRO (multi-start local optimization). We use $19$ standard global optimization test problems to generate functions with effective dimensionality structure and of growing ambient dimension $D$. When comparing REGO with the direct optimization of the ensuing problems without embeddings, we find  that REGO's performance is essentially independent of $D$ for all three solvers and that it is successful in recovering the original global minimum in most cases with only one embedding\footnote{These numerical results assume that (an upper bound on) the true effective dimension $d_e$ is known/available.}. We also test the robustness of REGO's performance to variations in algorithm parameters such as  $\delta$ and $d$.

\paragraph{Paper outline.}
In \Cref{sec: functions with low eff. dim.}, we formally define and describe functions with low effective dimensionality emphasizing their geometrical aspects. In \Cref{sec: characterization}, we characterize the reduced minimizers in the reduced space focusing on the minimal 2-norm minimizer. For this minimizer, we derive the distribution of its Euclidean norm and its probability density function. We use the former result in \Cref{sec: bounding_success} to derive a probabilistic bound for the success of \eqref{eq: REGO}. In \Cref{sec: Numerical_experiments}, we conduct numerical experiments to test REGO algorithm on functions with low effective dimensionality using three optimization solvers, namely, DIRECT, BARON and KNITRO, while in \Cref{sec: conclusions} we draw our conclusions and future directions.

\paragraph{Notation.}
We use bold capital letters to denote matrices ($\mtx{A}$) and bold lowercase letters ($\mvec{a}$) to denote vectors. In particular, we use $\mtx{I}_D$ to denote the $D \times D$ identity matrix and $\mvec{0}_D$, $\mvec{1}_D$ (or simply $\mvec{0}$, $\mvec{1}$) to denote the $D$-dimensional vectors of zeros and ones, respectively. For an $D \times d$ matrix $\mtx{A}$, we write $\range(\mtx{A})$ to denote the linear subspace spanned by the columns of $\mtx{A}$ in $\mathbb{R}^D$. 

We let $\langle \cdot , \cdot \rangle$, $\| \cdot \|$ and $\| \cdot \|_{\infty}$ to denote the usual Euclidean inner product, the Euclidean norm and the infinity norm, respectively. Where emphasis is needed, for the Euclidean norm we also use $\| \cdot \|_2$. 

Given two random variables (vectors) $x$ and $y$ ($\mvec{x}$ and $\mvec{y}$), we write $x \stackrel{law}{=} y$ ($\mvec{x} \stackrel{law}{=} \mvec{y}$) to denote the fact that $x$ and $y$ ($\mvec{x}$ and $\mvec{y}$) have the same distribution. We reserve the letter $\mtx{A}$ to refer to a $D\times d$ Gaussian random matrix (see \Cref{def: Gaussian_matrix}) and write $\chi^2_n$ to denote a chi-squared random variable with $n$ degrees of freedom (see \Cref{def: chi-squared_rv}).

\section{Functions with low effective dimensionality} \label{sec: functions with low eff. dim.}
In this section, we formally define functions with low effective dimensionality and describe the geometry of \eqref{eq: REGO}. 

\subsection{Definitions and assumptions}
Functions with low effective dimensionality can be defined in at least two ways \cite{Fornasier2012,Wang2016}. We will work with a definition given in terms of linear subspaces, provided in \cite{Wang2016}.
\begin{definition}[Functions with low effective dimensionality] \label{def: simple_function}
	A function $f : \mathbb{R}^D \rightarrow \mathbb{R}$ has effective dimensionality $d_e \leq D$ if there exists a linear subspace $\mathcal{T}$ of dimension $d_e$ such that for all vectors $\mvec{x}_{\top}$ in $\mathcal{T}$ and $\mvec{x}_{\perp}$ in $\mathcal{T}^{\perp}$ (orthogonal complement of $\mathcal{T}$) we have
	\begin{equation}\label{eq: def_fun_eff_dim}
		f(\mvec{x}_{\top} + \mvec{x}_{\perp}) = f(\mvec{x}_{\top}).
	\end{equation}
	and $d_e$ is the smallest integer satisfying \eqref{eq: def_fun_eff_dim}.
\end{definition}
The linear subspace $\mathcal{T}$ is called the \textit{effective} subspace of $f$ and its orthogonal complement $\mathcal{T}^{\perp}$, the \textit{constant} subspace of $f$. It is convenient to think of $\mathcal{T}^{\perp}$ as a subspace of no variation of largest dimension (along which the value of $f$ does not change) and $\mathcal{T}$ as its orthogonal complement. 

Every vector $\mvec{x}$ can be decomposed as $\mvec{x} = \mvec{x}_{\top} + \mvec{x}_{\perp}$, where $\mvec{x}_{\top}$ and $\mvec{x}_{\perp}$ are orthogonal projections of $\mvec{x}$ onto $\mathcal{T}$ and $\mathcal{T}^{\perp}$, respectively. In particular, if $\mvec{x}^*$ is a global minimizer and $f^*$ is the global minimum of $f$ in $\mathcal{X}$ then $\mvec{x}^* = \mvec{x}^*_{\top} + \mvec{x}^*_{\perp}$ and 
\begin{equation}\label{eq: f^*=f(x_top^*)}
f^* = f(\mvec{x}^*) = f(\mvec{x}^*_{\top} + \mvec{x}^*_{\perp}) = f(\mvec{x}^*_{\top}).
\end{equation}
Moreover, we have 
\begin{equation*}
f^* = f(\mvec{x}_{\top}^*) = f(\mvec{x}_{\top}^* + \mvec{x}_{\perp}) 
\end{equation*}
for every vector $\mvec{x}_{\perp}$ in $\mathcal{T}^{\perp}$. It is important to note that there can be multiple points $\mvec{x}_{\top}^*$ in $\mathbb{R}^D$ satisfying the above definition such as, for instance, $\mvec{x}^*_{-1}$, $\mvec{x}^*_{0}$ and $\mvec{x}^*_{1}$ in \Cref{ex: sin_function}. By contrast, the function $f = (x_1 - x_2-0.5)^2$ admits a unique $\mvec{x}_{\top}^*$ given by $(0.25 \; -0.25)^T$. 

We summarize the above discussion in the following assumption.
\begin{assump}\label{ass: REGO_fun_eff_dim}
	The function $f:\mathbb{R}^D \rightarrow \mathbb{R}$ is continuous and has effective dimensionality $d_e \leq d$ with effective subspace\footnote{Note that $\mathcal{T}$ in \Cref{ass: REGO_fun_eff_dim} may not be aligned with the standard axes. } $\mathcal{T}$ and constant subspace $\mathcal{T}^{\perp}$ spanned by the columns of orthonormal matrices $\mtx{U} \in \mathbb{R}^{D \times d_e}$ and $\mtx{V} \in \mathbb{R}^{D\times(D-d_e)}$, respectively.
\end{assump}
Recalling the definition of problem \eqref{eq: GO} on page \pageref{eq: GO}, let
$$ \mathcal{G} = \{\mvec{x} \in \mathbb{R}^D: f(\mvec{x}) = f^* \} $$
be the set of global minimizers in $\mathbb{R}^D$. Under \Cref{ass: REGO_fun_eff_dim}, the set $\mathcal{G}$ can be represented as a union of (possibly infinitely many) affine subspaces each containing one particular $\mvec{x}_{\top}^*$ (see proof of \Cref{thm: prob_REGO_is_successful}). 
Each of these affine subspaces is a $(D-d_e)$-dimensional set $\{ \mvec{x} \in \mathbb{R}^D : \mvec{x} \in \mvec{x}_{\top}^* + \mathcal{T}^{\perp} \}$ --- a translation of $\mathcal{T}^{\perp}$ by the vector $\mvec{x}_{\top}^*$ that the corresponding affine subspace must contain. In particular, if there is a unique $\mvec{x}_{\top}^*$ in $\mathbb{R}^D$ then $\mathcal{G} = \{ \mvec{x} \in \mathbb{R}^D : \mvec{x} \in \mvec{x}_{\top}^* + \mathcal{T}^{\perp} \}$. Note also that point(s) $\mvec{x}^*_{\top}$ lie in $\mathcal{G} \cap \mathcal{T}$, and are the closest minimizers to the origin in Euclidean norm amongst all the minimizers lying in their respective affine subspaces.

Our analysis applies to any minimizer $\mvec{x}^*$ with $\mvec{x}_{\top}^* \neq \mvec{0}$. If $\mvec{x}_{\top}^* = \mvec{0}$, then \eqref{eq: REGO} has a trivial solution. In that case, the origin is a global minimizer implying that every embedding is successful with a solution $\mvec{y}^* = \mvec{0}$. Hence, we focus our analysis for finding a(ny) minimizer $\mvec{x}^* \in \mathcal{G}$ with $\mvec{x}_{\top}^* \neq \mvec{0}$. 
\begin{assump} \label{def: REGO_choice_of_x^*}
	Given \Cref{ass: REGO_fun_eff_dim}, let $\mvec{x}^* \in \mathcal{G}$ such that $\mvec{x}_{\top}^* = \mtx{U}\mtx{U}^T\mvec{x}^*$  --- the unique Euclidean projection of $\mvec{x}^*$ onto $\mathcal{T}$ --- is non-zero.  Let $\mathcal{G}^* := \mvec{x}_{\top}^* + \mathcal{T}^{\perp}$ be the affine subspace of $\mathcal{G}$ that contains $\mvec{x}_{\top}^*$. 
\end{assump}

The set $\mathcal{G}$ contains infinitely many global minimizers --- a particularly useful feature of the functions with low effective dimensionality; this fact suggests that targeting $\mathcal{G}$ numerically may potentially be easier than if $\mathcal{G}$ contained only one point.

\subsection{Geometric description}
We now provide a geometric description of \eqref{eq: REGO}, which serves as a basis for our theoretical investigations.

In \Cref{fig: mapping_Y_to_X}, we illustrate schematically $\mathcal{T}$ (the effective subspace of $f$), $\mathcal{T}^{\perp}$ (the orthogonal component of $\mathcal{T}$), $\mathcal{G}^*$ (a connected component\footnote{Recall that $\mathcal{G}$ is a union of affine subspaces; $\mathcal{G}^*$ is one of them.} of $\mathcal{G}$), $\mvec{x}_{\top}^*$ (the orthogonal projection of the global minimizers on $\mathcal{G}^*$ onto $\mathcal{T}$). Since the orientation and position of these geometric objects are solely determined by the (deterministic) objective function, they are fixed, non-random. 

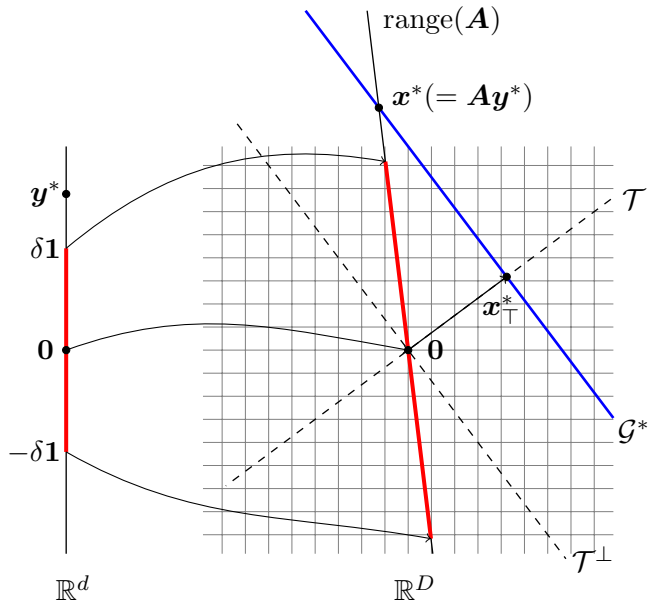
\begin{figure}[!t]
	\centering
	\begin{tikzpicture}[scale = 0.9]
	\draw[step=0.34cm,gray,very thin] (-3,-3) grid (3,3);
	\draw[line width = 1pt, color = blue] (-1.5,5) -- (3,-1);
	\draw[line width = 0.5pt, ->] (0,0) -- (36/25,27/25);
	\draw[line width = 0.5pt] (-0.6,5) -- (0.36,-3);
	\draw[line width = 1.5pt, color = red] (-1/3,25/9) -- (1/3,-25/9);
	\draw[line width = 0.5pt, dashed] (80/27, 20/9) -- (-72/27,-2);
	\draw[line width = 0.5pt, dashed] (-90/36, 90/27) -- (83/36,-83/27);
	
	\draw[line width = 0.5pt] (-4-1,-3) -- (-4-1,3);
	\draw[line width = 1.5pt, color = red] (-4-1,-1.5) -- (-4-1,1.5);
	\draw[->] (-4-1,1.5) to[out=40,in=170] (-1/3,25/9);
	\draw[->] (-4-1,-1.5) to[out=-30,in=170] (1/3,-25/9);
	\draw[->] (-5,0) to[out=20,in=170] (0,0);

	
	\draw[fill] (0,0) circle(1.5pt);
	\draw[fill] (36/25,27/25) circle(1.5pt);
	\draw[fill] (-3/7, 25/7) circle(1.5pt);
	\draw[fill] (-4-1, 2.3) circle(1.5pt);
	\draw[fill] (-5, 0) circle(1.5pt);
	
	\node at (3.3,20/9) {$\mathcal{T}$};
	\node at (2.7,-83/27) {$\mathcal{T}^{\perp}$};
	\node at (0.5,34/7) {$\range(\mtx{A})$};
	\node at (34/25,0.6) {$\mvec{x}_{\top}^*$};
	\node at (0.8,26/7) {$ \mvec{x}^* (=  \mtx{A}\mvec{y}^*)$};
	\node at (0.1,-3.5) {$\mathbb{R}^D$};
	\node at (-4.9,-3.5) {$\mathbb{R}^d$};
	\node[align = left] at (-4.3-1,1.5) {$\delta \mvec{1}$};
	\node[align = left] at (-4.47-1,-1.5) {$-\delta \mvec{1}$};
	\node at (-5.3,2.3) {$\mvec{y}^*$};
	\node at (-5.3,0) {$\mvec{0}$};
	\node at (3.3, -1.2) {$\mathcal{G}^*$};
	\node at (0.4,0) {$\mvec{0}$};

	\end{tikzpicture}
	\caption{The figure shows an abstract illustration of the embedding of a $d$-dimensional linear subspace into $\mathbb{R}^D$. The line $\range(\mtx{A})$ corresponds to the embedded subspace. The red line in $\mathbb{R}^d$ represents the hypercube $\mathcal{Y} = \{ \mvec{y} \in \mathbb{R}^d: -\delta \mvec{1} \leq \mvec{y} \leq \delta \mvec{1} \}$, which, after application of $\mtx{A}$, maps to the red line along $\range(\mtx{A})$ in $\mathbb{R}^D$. In this configuration, condition \eqref{cond: existence_of_y_in_R^d} is satisfied but \eqref{cond: existence_of_y_in_Y} is not: $\range(\mtx{A})$ intersects $\mathcal{G}$ at $\mvec{x}^* = \mtx{A}\mvec{y}^*$, but $\mvec{y}^*$ lies outside $\mathcal{Y}$. }
	\label{fig: mapping_Y_to_X}
\end{figure}

By applying the `random embedding' \eqref{eq: REGO}, we switch from optimizing over $\mathbb{R}^D$ to optimizing over $\mathcal{Y}$. The linear mapping $ \mvec{y} \rightarrow \mtx{A}\mvec{y}$ maps points of the hypercube $\mathcal{Y}$ to points along the subspace $\range(\mtx{A})$ in $\mathbb{R}^D$, which means that searching over $\mathcal{Y}$ is equivalent to searching over the corresponding feasible set along $\range(\mtx{A})$ in $\mathbb{R}^D$. An example of this mapping is illustrated in \Cref{fig: mapping_Y_to_X} with two red line segments: the segment (from $-\delta \mvec{1}$ to $\delta \mvec{1}$) representing $\mathcal{Y}$ is being mapped to the right segment, which lies in $\range(\mtx{A})$. It is important to note that the centre of $\mathcal{Y}$ maps to the origin in $\mathbb{R}^D$ and, hence, the corresponding search in the original space is also centred at the origin. 

The most valuable information that we want to retain while performing dimensionality reduction is the value of $f^*$. We would like $\min_{\mvec{y}\in \mathcal{Y}} f(\mtx{A}\mvec{y}) = f^*$, which holds only if there is at least one $\mvec{y}^*$ in $\mathcal{Y}$ such that $f(\mtx{A}\mvec{y}^*) = f^*$. This condition has a geometric interpretation and, following from the definition of $\mathcal{G}$, it can equivalently be stated as:
\begin{equation}\label{cond: existence_of_y_in_Y}
	\text{there exists a $\mvec{y}^* \in \mathcal{Y}$ such that $\mtx{A}\mvec{y}^* \in \mathcal{G}$.}
\end{equation}
For \eqref{cond: existence_of_y_in_Y} to hold, we must first ensure that 
\begin{equation}\label{cond: existence_of_y_in_R^d}
	\text{there exists a $\mvec{y}^*$ in $\mathbb{R}^d$ such that $\mtx{A}\mvec{y}^* \in \mathcal{G}$.}
\end{equation}
In this regard, Wang et al. \cite{Wang2016} proved the following theorem. 
\begin{theorem}\label{thm: Wang_theorem} (see \cite[Theorem 2]{Wang2016})
	Let \Cref{ass: REGO_fun_eff_dim} hold and let $\mtx{A}$ be a $D \times d$ Gaussian matrix with $d \geq d_e$. Then, with probability one, for any fixed $\mvec{x} \in \mathbb{R}^D$, there exists a $\mvec{y} \in \mathbb{R}^d$ such that $f(\mvec{x}) = f(\mtx{A}\mvec{y})$. In particular, for a global minimizer $\mvec{x}^*$, with probability one, there exists a $\mvec{y}^* \in \mathbb{R}^d$ such that $f(\mtx{A}\mvec{y}^*) = f(\mvec{x}^*) = f^*$. 
\end{theorem}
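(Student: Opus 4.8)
The plan is to reduce the claim to a statement about the almost-sure rank of a small Gaussian matrix. First I would use \Cref{ass: REGO_fun_eff_dim} to decompose any fixed $\mvec{x} \in \mathbb{R}^D$ as $\mvec{x} = \mvec{x}_{\top} + \mvec{x}_{\perp}$ with $\mvec{x}_{\top} = \mtx{U}\mtx{U}^T\mvec{x} \in \mathcal{T}$ and $\mvec{x}_{\perp} \in \mathcal{T}^{\perp}$, so that \eqref{eq: def_fun_eff_dim} gives $f(\mvec{x}) = f(\mvec{x}_{\top})$. Likewise, for any candidate $\mvec{y}$ we have $f(\mtx{A}\mvec{y}) = f(\mtx{U}\mtx{U}^T\mtx{A}\mvec{y})$, since $f$ depends only on the $\mathcal{T}$-component of its argument. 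Hence it suffices to find $\mvec{y} \in \mathbb{R}^d$ whose image has the same projection onto $\mathcal{T}$ as $\mvec{x}$, namely $\mtx{U}\mtx{U}^T\mtx{A}\mvec{y} = \mtx{U}\mtx{U}^T\mvec{x}$. Because $\mtx{U}$ has full column rank (so it is injective) and $\mtx{U}^T\mtx{U} = \mtx{I}_{d_e}$, multiplying on the left by $\mtx{U}^T$ shows this is equivalent to the linear system $(\mtx{U}^T\mtx{A})\mvec{y} = \mtx{U}^T\mvec{x}$.

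Writing $\mtx{B} = \mtx{U}^T\mtx{A} \in \mathbb{R}^{d_e \times d}$, the solvability of this system for \emph{every} right-hand side is equivalent to $\mtx{B}$ having full row rank $d_e$, which is where the randomness of $\mtx{A}$ enters. The next step is to observe that $\mtx{B}$ is itself a $d_e \times d$ Gaussian matrix in the sense of \Cref{def: Gaussian_matrix}: each column of $\mtx{A}$ is an independent standard Gaussian vector in $\mathbb{R}^D$, and left-multiplication by $\mtx{U}^T$ (whose rows are orthonormal) maps it to a Gaussian vector in $\mathbb{R}^{d_e}$ with covariance $\mtx{U}^T\mtx{U} = \mtx{I}_{d_e}$, with independence across columns preserved. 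It then remains to show that such a matrix with $d \geq d_e$ has $\rank(\mtx{B}) = d_e$ with probability one.

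For that final step I would use the standard measure-zero argument: $\rank(\mtx{B}) < d_e$ forces every $d_e \times d_e$ minor of $\mtx{B}$ to vanish, and each such minor is a polynomial in the entries of $\mtx{B}$ that is not identically zero (it is nonzero, for instance, at a submatrix equal to $\mtx{I}_{d_e}$); the zero set of a nontrivial polynomial has Lebesgue measure zero, and the Gaussian law is absolutely continuous with respect to Lebesgue measure, so this event has probability zero. Thus, with probability one, $\mtx{B}$ has full row rank and the system $\mtx{B}\mvec{y} = \mtx{U}^T\mvec{x}$ is solvable, yielding the required $\mvec{y}$ with $f(\mtx{A}\mvec{y}) = f(\mvec{x})$. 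The particular case for a global minimizer is immediate on taking $\mvec{x} = \mvec{x}^*$. The main obstacle here is conceptual rather than technical: recognising that matching only the $\mathcal{T}$-projection suffices, so that the problem collapses to the $d_e \times d$ system $\mtx{B}\mvec{y} = \mtx{U}^T\mvec{x}$, after which the almost-sure full-rank property is routine. I would also note that the full-rank event does not depend on $\mvec{x}$, so a single probability-one event handles all fixed $\mvec{x} \in \mathbb{R}^D$ at once.
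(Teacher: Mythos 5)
Your proof is correct. The paper itself does not prove this theorem --- it imports it from \cite[Theorem 2]{Wang2016} --- but your argument is essentially the route the paper takes when it later proves \Cref{thm: By=z} and its corollary: reduce matching of function values to matching of $\mathcal{T}$-projections, pass via injectivity of $\mtx{U}$ to the $d_e \times d$ linear system $(\mtx{U}^T\mtx{A})\mvec{y} = \mtx{U}^T\mvec{x}$, observe that $\mtx{B} = \mtx{U}^T\mtx{A}$ is again Gaussian (the paper invokes \Cref{thm: orthog_inv_of_Gaussian_matrices} for this), and conclude solvability from almost-sure full row rank of $\mtx{B}$. The one place you genuinely diverge is the full-rank step: the paper deduces $\rank(\mtx{B}) = d_e$ with probability one from the almost-sure positive definiteness of the Wishart matrix $\mtx{B}\mtx{B}^T$ (\Cref{thm: Wishart_is_nonsingular}, cited from \cite{Gupta1999}), whereas you use the elementary measure-theoretic fact that a nontrivial polynomial in the entries (a $d_e \times d_e$ minor) vanishes only on a Lebesgue-null set, which is null for the absolutely continuous Gaussian law. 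Both are valid; yours is self-contained and more elementary, while the paper's choice plugs into the Wishart machinery it needs anyway for the distributional results of \Cref{sec: characterization}. Your closing remark that the full-rank event does not depend on $\mvec{x}$ --- so a single probability-one event serves all fixed $\mvec{x}$ simultaneously --- is a small but genuine sharpening of the quantifier order in the statement.
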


While satisfaction of \eqref{cond: existence_of_y_in_R^d} only depends on $d \geq d_e$, that of \eqref{cond: existence_of_y_in_Y} is determined by the values of both $d$ and $\delta$. For larger values of $d$ and/or $\delta$ the probability that \eqref{cond: existence_of_y_in_Y} is satisfied is higher. One must, on the other hand, be cognisant of the fact that larger values of $d$ --- the dimension of \eqref{eq: REGO} --- and/or $\delta$ --- the half-length of the domain --- demand more computational resources. Therefore, a careful calibration of these two parameters is needed to ensure that \eqref{eq: REGO} is successful for most embeddings, at the same time being capable to converge to the solution within the computational budget. In this regard, our analysis will attempt to answer the following question: What are optimal values of $d$ and $\delta$ such that \eqref{cond: existence_of_y_in_Y} is satisfied with `high' probability?

\section{Characterizing minimizers in the reduced space} \label{sec: characterization}
The analysis of this section focuses on determining the distribution of the random minimizer $\mvec{y}^*$ of $f(\mtx{A}\mvec{y})$, which satisfies $f(\mtx{A}\mvec{y}^*) = f^*$. These results will inform us on the effects of the different parameters on the success of \eqref{eq: REGO} allowing us to estimate the values of $\delta$ and $d$ that are likely to increase the chances of successful recovery of $f^*$.

The following theorem provides a useful characterization of $\mvec{y}^*$. The theorem and its proof were inspired by the proofs of Theorems 2 and 3 in \cite{Wang2016}.  
\begin{theorem}\label{thm: By=z}
	Let \Cref{ass: REGO_fun_eff_dim} hold and let $\mvec{x}_{\top}^*$ and $\mathcal{G}^*$ be defined as in \Cref{def: REGO_choice_of_x^*}. Let $\mtx{A}$ be a $D \times d$ Gaussian matrix. Then, $\mvec{y}^{*} \in \mathbb{R}^d$ satisfies $\mtx{A}\mvec{y}^* \in \mathcal{G}^*$ if and only if
	\begin{equation} \label{eq: By=z}
	\mtx{B} \mvec{y}^* = \mvec{z}^*
	\end{equation}
	where the $d_e \times d$ random matrix $\mtx{B}$ satisfies $ \mtx{B} = \mtx{U}^T \mtx{A}$ and where $\mvec{z}^* \in \mathbb{R}^{d_e} $ is uniquely defined by $\mtx{U}\mvec{z}^* = \mvec{x}_{\top}^*$.
\end{theorem}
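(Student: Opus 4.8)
The plan is to reduce the geometric membership condition $\mtx{A}\mvec{y}^* \in \mathcal{G}^*$ to the linear system \eqref{eq: By=z} by exploiting the orthogonal decomposition $\mathbb{R}^D = \mathcal{T} \oplus \mathcal{T}^{\perp}$ together with the fact that the columns of $\mtx{U}$ form an orthonormal basis of $\mathcal{T}$. Note that the argument is entirely deterministic: it holds for every realisation of $\mtx{A}$, so the Gaussian distribution plays no role at this stage. First I would rewrite the target set, using \Cref{def: REGO_choice_of_x^*}, as $\mathcal{G}^* = \mvec{x}_{\top}^* + \mathcal{T}^{\perp}$, so that $\mtx{A}\mvec{y}^* \in \mathcal{G}^*$ is equivalent to $\mtx{A}\mvec{y}^* - \mvec{x}_{\top}^* \in \mathcal{T}^{\perp}$.

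Next I would characterise membership in $\mathcal{T}^{\perp}$ algebraically. Since the columns of $\mtx{U}$ span $\mathcal{T}$, we have $\mathcal{T}^{\perp} = \range(\mtx{U})^{\perp} = \nll(\mtx{U}^T)$; equivalently, a vector $\mvec{v}$ lies in $\mathcal{T}^{\perp}$ if and only if $\mtx{U}^T \mvec{v} = \mvec{0}$. Applying this with $\mvec{v} = \mtx{A}\mvec{y}^* - \mvec{x}_{\top}^*$ turns the condition into $\mtx{U}^T \mtx{A}\mvec{y}^* = \mtx{U}^T \mvec{x}_{\top}^*$. On the left, $\mtx{U}^T\mtx{A} = \mtx{B}$ by definition. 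On the right, because $\mvec{x}_{\top}^* \in \mathcal{T} = \range(\mtx{U})$ and $\mtx{U}$ has full column rank, there is a unique $\mvec{z}^* \in \mathbb{R}^{d_e}$ with $\mtx{U}\mvec{z}^* = \mvec{x}_{\top}^*$, namely $\mvec{z}^* = \mtx{U}^T\mvec{x}_{\top}^*$ (using $\mtx{U}^T\mtx{U} = \mtx{I}_{d_e}$). Hence $\mtx{U}^T\mvec{x}_{\top}^* = \mvec{z}^*$ and the condition becomes exactly $\mtx{B}\mvec{y}^* = \mvec{z}^*$, as claimed.

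Each of the steps above is an equivalence, so both directions of the ``if and only if'' follow simultaneously and no separate converse argument is needed. The only point requiring a little care --- and the closest thing to an obstacle in an otherwise routine argument --- is the passage between the vanishing of the orthogonal projection onto $\mathcal{T}$, i.e.\ $\mtx{U}\mtx{U}^T \mvec{v} = \mvec{0}$, and the stronger-looking statement $\mtx{U}^T\mvec{v} = \mvec{0}$: these are in fact equivalent precisely because $\mtx{U}$ is injective (left-invertible via $\mtx{U}^T$), so the leading factor $\mtx{U}$ may be cancelled. The same injectivity underlies the uniqueness of $\mvec{z}^*$. I would therefore state these two facts explicitly, as they are exactly where the orthonormality of $\mtx{U}$ --- rather than $\mtx{U}$ merely spanning $\mathcal{T}$ --- is used.
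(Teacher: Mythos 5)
Your proof is correct, and it is cleaner than the paper's own argument while resting on the same underlying geometry. The paper proceeds in two stages: it first proves the intermediate equivalence ``$\mtx{A}\mvec{y}^* \in \mathcal{G}^*$ iff $\mvec{x}_{\top}^* = \mtx{U}\mtx{U}^T\mtx{A}\mvec{y}^*$'' (handling the two implications separately, with the converse using the matrix $\mtx{V}$ and the identity $\mtx{U}\mtx{U}^T + \mtx{V}\mtx{V}^T = \mtx{I}_D$), and then converts that $D$-dimensional projection equation into $\mtx{B}\mvec{y}^* = \mvec{z}^*$ by multiplying through by the orthogonal matrix $\mtx{S}^T = (\mtx{U}\;\mtx{V})^T$ and reading off the block components. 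You bypass both the matrix $\mtx{V}$ and the matrix $\mtx{S}$ entirely by invoking the characterisation $\mathcal{T}^{\perp} = \range(\mtx{U})^{\perp} = \nll(\mtx{U}^T)$, which collapses the whole statement into a single chain of equivalences: $\mtx{A}\mvec{y}^* - \mvec{x}_{\top}^* \in \mathcal{T}^{\perp}$ iff $\mtx{U}^T\mtx{A}\mvec{y}^* = \mtx{U}^T\mvec{x}_{\top}^* = \mvec{z}^*$. What your route buys is brevity and the elimination of any separate converse argument; what the paper's route makes more explicit is the full orthogonal decomposition of $\mtx{A}\mvec{y}^*$ into its $\mathcal{T}$ and $\mathcal{T}^{\perp}$ components, which foreshadows manipulations reused elsewhere (e.g.\ in the proof of \Cref{thm: prob_REGO_is_successful}). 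Your closing remarks --- that the theorem is deterministic and Gaussianity is irrelevant here, and that injectivity of $\mtx{U}$ is precisely what permits cancelling the leading factor in $\mtx{U}\mtx{U}^T\mvec{v} = \mvec{0}$ and what gives uniqueness of $\mvec{z}^*$ --- are both accurate and worth stating explicitly.
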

\begin{proof}
	Let $\mvec{y}^* \in \mathbb{R}^d$ be such that $\mtx{A}\mvec{y}^* \in \mathcal{G}^*$. First, we establish that 
	\begin{equation}\label{assert: f(Ay)=f(x)_iff_x=UU^TAy}
	\text{$\mtx{A}\mvec{y}^* \in \mathcal{G}^*$ if and only if $\mvec{x}_{\top}^* = \mtx{U}\mtx{U}^T\mtx{A}\mvec{y}^*$.}
	\end{equation}
	
	Suppose that $\mtx{A}\mvec{y}^* \in \mathcal{G}^*$. Then, using the definition of $\mathcal{G}^*$ in \Cref{def: REGO_choice_of_x^*} we can write $\mtx{A}\mvec{y}^* = \mvec{x}_{\top}^* + \mvec{x}_{\perp}$ for some $\mvec{x}_{\perp} \in \mathcal{T}^{\perp}$.
	The orthogonal projection of $\mtx{A}\mvec{y}^*$ onto $\mathcal{T}$ is given by
	$$ \mtx{U}\mtx{U}^T\mtx{A}\mvec{y}^* = \mtx{U}\mtx{U}^T(\mvec{x}_{\top}^* + \mvec{x}_{\perp}) = \mvec{x}_{\top}^*,$$
	where we have used $\mtx{U}\mtx{U}^T \mvec{x}_{\top}^* = \mvec{x}_{\top}^*$ and $\mtx{U}\mtx{U}^T \mvec{x}_{\perp} = \mvec{0}$. 
	
	Conversely, assume that $\mvec{y}^*$ satisfies 
	\begin{equation}\label{eq: x_T^*=UU^TAy}
	\mvec{x}^*_{\top} = \mtx{U}\mtx{U}^T \mtx{A}\mvec{y}^*.
	\end{equation}
	Denote by $\mtx{S}$ the $D \times D$ orthogonal matrix $(\mtx{U} \; \mtx{V})$, where $\mtx{V}$ is defined in \Cref{ass: REGO_fun_eff_dim}. Using \eqref{eq: x_T^*=UU^TAy} and the identity $\mtx{U}\mtx{U}^T + \mtx{V}\mtx{V}^T = \mtx{S} \mtx{S}^T = \mtx{I}_D$, we obtain
	\begin{equation*}
	\mtx{A}\mvec{y}^* = (\mtx{U}\mtx{U}^T + \mtx{V}\mtx{V}^T)\mtx{A}\mvec{y}^* = \mvec{x}_{\top}^* + \mtx{V}\mtx{V}^T\mtx{A}\mvec{y}^*.
	\end{equation*}
	Note that $\mtx{V}\mtx{V}^T\mtx{A}\mvec{y}^*$ lies on $\mathcal{T}^{\perp}$ as it is the orthogonal projection of $\mvec{A}\mvec{y}^*$ onto $\mathcal{T}^{\perp}$, which implies that $\mtx{A}\mvec{y}^* \in \mathcal{G}^*$.
	This completes the proof of \eqref{assert: f(Ay)=f(x)_iff_x=UU^TAy}.
	
	Now we show that \eqref{eq: By=z} and \eqref{eq: x_T^*=UU^TAy} are equivalent. We multiply both sides of $\mvec{x}^*_{\top} = \mtx{U}\mtx{U}^T \mtx{A}\mvec{y}^*$ by $\mtx{S}^T$, and obtain
	\begin{equation} \label{eq: Qx^* = QUU^TAy^*}
	\begin{pmatrix}
	\mtx{U}^T \\
	\mtx{V}^T
	\end{pmatrix} \mvec{x}_{\top}^* = \begin{pmatrix}
	\mtx{U}^T \\
	\mtx{V}^T
	\end{pmatrix} \mtx{U}\mvec{U}^T\mtx{A}\mvec{y}^*.
	\end{equation}
	Since $\mvec{x}_{\top}^*$ is in the column span of $\mtx{U}$, it can be written as $\mvec{x}_{\top}^* = \mtx{U}\mvec{z}^*$
	for some (unique) vector $\mvec{z}^* \in \mathbb{R}^{d_e}$. By substituting the above into \eqref{eq: Qx^* = QUU^TAy^*} we obtain
	$$ \begin{pmatrix}
	\mtx{U}^T\mtx{U}\mvec{z}^* \\
	\mtx{V}^T \mtx{U} \mvec{z}^*
	\end{pmatrix} = \begin{pmatrix}
	\mtx{U}^T\mtx{U} \mtx{U}^T\mtx{A}\mvec{y}^* \\
	\mtx{V}^T \mtx{U} \mtx{U}^T \mtx{A} \mvec{y}^*
	\end{pmatrix}.$$
	This reduces to 
	$$\begin{pmatrix}
	\mvec{z}^* \\
	\mvec{0}
	\end{pmatrix} = \begin{pmatrix}
	\mtx{U}^T\mtx{A}\mvec{y}^* \\
	\mtx{0}
	\end{pmatrix},$$
	where we have used the identities $\mtx{U}^T\mtx{U} = \mtx{I}$ and $\mtx{V}^T\mtx{U} = \mvec{0}$, which follow from \Cref{ass: REGO_fun_eff_dim}.
	To obtain \eqref{eq: x_T^*=UU^TAy} from \eqref{eq: By=z}, multiply \eqref{eq: By=z} by $\mtx{U}$.
\end{proof}

\begin{rem} \label{rem: defined_B_and_z}
	Thereafter, we write $\mtx{B}$ to refer to the $d_e \times d$ Gaussian matrix\footnote{Since $\mtx{U}$ is orthogonal, it follows from \Cref{thm: orthog_inv_of_Gaussian_matrices} that $\mtx{B} = \mtx{U}^T\mtx{A}$ is a Gaussian matrix.} $\mtx{U}^T\mtx{A}$, where $\mtx{A}$ is a $D \times d$ Gaussian matrix and where $\mtx{U}$ is defined in \Cref{ass: REGO_fun_eff_dim}. Furthermore, we write $\mvec{z}^*$ to refer to the $d_e \times 1$ vector that satisfies $\mtx{U}\mvec{z}^* = \mvec{x}_{\top}^*$, where $\mvec{x}_{\top}^*$ is defined in \Cref{def: REGO_choice_of_x^*}. Observe that $\| \mvec{z}^* \| = \| \mvec{x}_{\top}^*\|$ since $\mtx{U}\mvec{z}^* = \mvec{x}_{\top}^*$ and $\mtx{U}$ is orthogonal.
\end{rem}

\begin{corollary}
	Let \Cref{ass: REGO_fun_eff_dim} hold. Let $S^* = \{ \mvec{y}^* : \mtx{A}\mvec{y}^* \in \mathcal{G}^* \}$, where $\mtx{A}$ is a $D \times d$ Gaussian matrix and where $\mathcal{G}^*$ is defined as in \Cref{def: REGO_choice_of_x^*}. Then, the following holds
	\begin{itemize}
		\item If $d = d_e$, then $S^*$ has exactly one element with probability 1.
		\item If $d > d_e$, then $S^*$ has infinitely many elements with probability 1. 
	\end{itemize}
\end{corollary}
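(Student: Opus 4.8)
The plan is to reduce the statement to a purely linear-algebraic fact about the solution set of a random linear system, and then invoke the almost-sure full-rank property of Gaussian matrices. By \Cref{thm: By=z}, a vector $\mvec{y}^* \in \mathbb{R}^d$ lies in $S^*$ if and only if $\mtx{B}\mvec{y}^* = \mvec{z}^*$, where $\mtx{B} = \mtx{U}^T\mtx{A}$ is the $d_e \times d$ Gaussian matrix from \Cref{rem: defined_B_and_z} and $\mvec{z}^* \in \mathbb{R}^{d_e}$ is a fixed (non-random) vector. Thus $S^*$ is precisely the solution set of this linear system, and its cardinality is governed by $\rank(\mtx{B})$ together with the consistency of the system.

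The key probabilistic ingredient is that a $d_e \times d$ Gaussian matrix has full rank $\min(d_e,d) = d_e$ with probability one. I would establish this by observing that $\rank(\mtx{B}) < d_e$ forces every $d_e \times d_e$ minor of $\mtx{B}$ to vanish; each such minor is a polynomial in the entries of $\mtx{B}$ that is not identically zero, so its zero set is a proper algebraic variety and hence has Lebesgue measure zero. Since the entries of $\mtx{B}$ are jointly absolutely continuous with respect to Lebesgue measure (as $\mtx{B}$ is Gaussian), the event $\{\rank(\mtx{B}) < d_e\}$ has probability zero.

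It then remains to separate the two cases. If $d = d_e$, then $\mtx{B}$ is square and full rank with probability one, hence invertible, so $\mtx{B}\mvec{y}^* = \mvec{z}^*$ has the unique solution $\mvec{y}^* = \mtx{B}^{-1}\mvec{z}^*$; thus $S^*$ has exactly one element almost surely. If $d > d_e$, then with probability one $\mtx{B}$ has full row rank $d_e$, so $\range(\mtx{B}) = \mathbb{R}^{d_e}$ and the system is consistent for every $\mvec{z}^*$; its solution set is a translate of $\nll(\mtx{B})$, an affine subspace of dimension $d - \rank(\mtx{B}) = d - d_e \geq 1$, and therefore contains infinitely many points. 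This yields the two claimed conclusions.

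The main obstacle is the almost-sure full-rank claim: once it is in place, the remainder is a routine rank--nullity bookkeeping. The only real care needed is to articulate the measure-zero argument cleanly — that a nontrivial polynomial in the matrix entries vanishes on a Lebesgue-null set, combined with absolute continuity of the Gaussian law — rather than appealing to it informally. I note that the hypothesis $\mvec{z}^* \neq \mvec{0}$ inherited from \Cref{def: REGO_choice_of_x^*} is not actually required for this counting: in the over-determined case $d > d_e$ the infinitude of $S^*$ stems from $\nll(\mtx{B})$ irrespective of $\mvec{z}^*$.
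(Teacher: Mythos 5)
Your proof is correct, and its skeleton matches the paper's: reduce $S^*$ via \Cref{thm: By=z} to the solution set of the linear system $\mtx{B}\mvec{y} = \mvec{z}^*$, establish that $\rank(\mtx{B}) = d_e$ with probability one, then do the rank--nullity case split. The one genuine difference is how the almost-sure full-rank fact is obtained. The paper cites \Cref{thm: Wishart_is_nonsingular}: the Wishart matrix $\mtx{B}\mtx{B}^T$ is positive definite with probability 1, hence $\rank(\mtx{B}) = \rank(\mtx{B}\mtx{B}^T) = d_e$ almost surely --- a short appeal to a standard random-matrix result that the paper reuses elsewhere (e.g.\ in \Cref{lemma: y_has_spherical_distr}). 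You instead prove full rank from first principles: the event $\{\rank(\mtx{B}) < d_e\}$ lies in the zero set of a not-identically-zero polynomial (a $d_e \times d_e$ minor), which has Lebesgue measure zero, and the Gaussian law is absolutely continuous. Your route is self-contained and actually more general --- it applies verbatim to any random matrix whose entries have a jointly absolutely continuous distribution, not just Gaussian --- at the cost of carrying the measure-theoretic argument explicitly; the paper's route is shorter given the cited theorem. Your closing observation that $\mvec{z}^* \neq \mvec{0}$ is not needed for the counting is also correct (the paper's proof never uses it either). One terminological slip: in the case $d > d_e$ the system $\mtx{B}\mvec{y} = \mvec{z}^*$ is \emph{underdetermined} (more unknowns than equations), not over-determined as your last sentence says; the mathematics surrounding it is unaffected.
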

\begin{proof}
	It follows from \Cref{thm: By=z} that the set $S^*$ and the set of solutions to $\mtx{B}\mvec{y} = \mvec{z}^*$ coincide. According to \Cref{thm: Wishart_is_nonsingular}, $\mtx{B}\mtx{B}^T$ is positive definite with probability 1, which implies that $\rank(\mtx{B}\mtx{B}^T) = d_e$ with probability 1. Since $\rank(\mtx{B}) = \rank(\mtx{B}\mtx{B}^T)$, $\rank(\mtx{B}) = d_e$ with probability 1. Hence, the linear system $\mtx{B}\mvec{y} = \mvec{z}^*$ almost surely has a solution. If $d = d_e$ the linear system has only one solution. If $d > d_e$ the system is underdetermined and, therefore, has infinitely many solutions.
\end{proof}

\subsection{Choosing a suitable minimizer}
While $S^*$ contains infinitely many solutions if $d > d_e$, it is sufficient that one of these solutions is contained in $\mathcal{Y}$ for \eqref{eq: REGO} to be successful. We proceed further by choosing one particular solution $\mvec{y}^*$ that is easy to analyse and based on the analysis will adjust parameters $\delta$ and $d$ appropriately to ensure that the chosen $\mvec{y}^*$ falls inside the feasible set $\mathcal{Y}$ with high probability. The solutions that are likely to fall inside the feasible domain must be close to the origin. In this regard, we propose two candidates:

\begin{tabularx}{\linewidth-1cm}{@{}XX@{}}
	\noindent
	\begin{equation} \label{eq: minimal_2_norm}
	\begin{aligned}
	\mvec{y}_2^{*}  = \argmin_{\mvec{y} \in \mathbb{R}^d} \;\;& \| \mvec{y} \|_2 \\
	\text{s.t.}\;\; & \mvec{y} \in S^*,
	\end{aligned}
	\end{equation}
	& \noindent
	\begin{equation} \label{eq: minimal_infty_norm}
	\begin{aligned}
	\mvec{y}_{\infty}^* = \argmin_{\mvec{y} \in \mathbb{R}^d} \;\;& \| \mvec{y} \|_{\infty} \\
	\text{s.t.} \;\;& \mvec{y} \in S^*.
	\end{aligned}
	\end{equation}
\end{tabularx}

\noindent
Due to the definition of $\mathcal{Y}$ as a box, the minimizer \eqref{eq: minimal_infty_norm} with the minimal infinity norm is particularly of interest. Since $\mvec{y}_{\infty}^*$ has the smallest infinity norm among all solutions in $S^*$, knowledge of $\mvec{y}_{\infty}^*$ would allow us to choose the smallest possible $\mathcal{Y}$ while ensuring that \eqref{eq: REGO} is successful. Despite this convenient fact, we found that it is more difficult to study $\mvec{y}_{\infty}^*$ and have decided to investigate $\mvec{y}_{2}^*$ instead. 
\begin{rem}
	For $d = d_e$, $\mvec{y}_2^* = \mvec{y}_{\infty}^*$ because $S^*$ contains only one element.
\end{rem}
\begin{lemma} \label{lemma: REGO_successful_if_y_2_is_in_Y}
	Let \Cref{ass: REGO_fun_eff_dim} hold and let $\mathcal{G}^*$ and $\mvec{y}^*_{2}$ be defined as in \Cref{def: REGO_choice_of_x^*} and \eqref{eq: minimal_2_norm}, respectively.
	Problem \eqref{eq: REGO} is successful in the sense of \Cref{def: successful_REGO} if $\mvec{y}^*_{2} \in \mathcal{Y}$.
\end{lemma}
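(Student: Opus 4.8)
The plan is to unwind the definitions in a short chain of set inclusions, since the statement is essentially a bookkeeping exercise connecting $\mvec{y}_2^*$, $S^*$, $\mathcal{G}^*$, $\mathcal{G}$ and \Cref{def: successful_REGO}. First I would note that, by its definition in \eqref{eq: minimal_2_norm}, the point $\mvec{y}_2^*$ is feasible for that minimisation problem, so $\mvec{y}_2^* \in S^*$. Recalling that $S^* = \{ \mvec{y} : \mtx{A}\mvec{y} \in \mathcal{G}^* \}$, this immediately yields $\mtx{A}\mvec{y}_2^* \in \mathcal{G}^*$.

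Next I would use the fact that $\mathcal{G}^* = \mvec{x}_{\top}^* + \mathcal{T}^{\perp}$ is one of the affine components of $\mathcal{G}$ (\Cref{def: REGO_choice_of_x^*}), hence $\mathcal{G}^* \subseteq \mathcal{G}$, so that $\mtx{A}\mvec{y}_2^* \in \mathcal{G}$. By the definition of $\mathcal{G}$ as the set of global minimizers, every element $\mvec{x} \in \mathcal{G}$ satisfies $f(\mvec{x}) = f^*$; applying this to $\mvec{x} = \mtx{A}\mvec{y}_2^*$ gives $f(\mtx{A}\mvec{y}_2^*) = f^*$.

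Finally, under the hypothesis $\mvec{y}_2^* \in \mathcal{Y}$, I would take $\mvec{y}^* := \mvec{y}_2^*$ as the witness required in \Cref{def: successful_REGO}: it lies in $\mathcal{Y}$ and satisfies $f(\mtx{A}\mvec{y}^*) = f^*$, which is exactly the condition for \eqref{eq: REGO} to be successful. This closes the argument.

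There is no genuine analytic difficulty here; the only point worth checking is that $\mvec{y}_2^*$ is well-defined, i.e.\ that $S^*$ is non-empty so that the argmin in \eqref{eq: minimal_2_norm} makes sense. This follows from \Cref{thm: Wang_theorem} (which guarantees, with probability one, a preimage under $\mtx{A}$ of a global minimizer) together with the equivalence established in \Cref{thm: By=z}; uniqueness of the minimal-norm element is then a consequence of minimising the strictly convex map $\mvec{y} \mapsto \| \mvec{y} \|_2$ over the affine set $S^*$. These are preliminaries rather than the substance of the lemma, which is purely a matter of chasing the definitions.
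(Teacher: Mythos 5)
Your proof is correct and follows essentially the same route as the paper's: take $\mvec{y}_2^* \in \mathcal{Y}$ as the feasible witness, note $\mtx{A}\mvec{y}_2^* \in \mathcal{G}^*$ by the definitions of $\mvec{y}_2^*$ and $S^*$, conclude $f(\mtx{A}\mvec{y}_2^*) = f^*$, and invoke \Cref{def: successful_REGO}. The extra remarks on well-definedness of $\mvec{y}_2^*$ are fine (the paper handles uniqueness separately via \Cref{cor: y_2_explicit_formula}) and do not change the substance.
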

\begin{proof}
	Assume that $\mvec{y}_{2}^* \in \mathcal{Y}$. Then, $\mvec{y}_{2}^*$ is a feasible solution of \eqref{eq: REGO}. By the definitions of $\mvec{y}_{2}^*$ and $S^*$, we also have $\mtx{A}\mvec{y}_{2}^* \in \mathcal{G}^*$; this implies that $f(\mtx{A}\mvec{y}_{2}^*) = f^*$. Hence, \eqref{eq: REGO} is successful by \Cref{def: successful_REGO}.
\end{proof}
\begin{corollary} \label{cor: y_2_explicit_formula}
	Let \Cref{ass: REGO_fun_eff_dim} hold. Problem $\eqref{eq: minimal_2_norm}$ has a unique solution given by 
	\begin{equation} \label{eq: Euclidean_norm_sol_explicit_formula}
		\mvec{y}_2^* = \mtx{B}^T(\mtx{B}\mtx{B}^T)^{-1}\mvec{z}^*.
	\end{equation}
\end{corollary}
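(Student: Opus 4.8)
The plan is to recognise \eqref{eq: minimal_2_norm} as the classical problem of finding the minimum Euclidean-norm solution of an underdetermined but consistent linear system, and to settle it by an orthogonality argument. First I would invoke \Cref{thm: By=z} to replace the constraint $\mvec{y} \in S^*$ by the equivalent linear system $\mtx{B}\mvec{y} = \mvec{z}^*$, turning \eqref{eq: minimal_2_norm} into the problem of minimising $\| \mvec{y} \|_2$ subject to $\mtx{B}\mvec{y} = \mvec{z}^*$. As already established in the proof of the preceding corollary (via \Cref{thm: Wishart_is_nonsingular}), $\mtx{B}\mtx{B}^T$ is positive definite with probability one, so $(\mtx{B}\mtx{B}^T)^{-1}$ exists and the proposed point $\mvec{y}_2^* = \mtx{B}^T(\mtx{B}\mtx{B}^T)^{-1}\mvec{z}^*$ in \eqref{eq: Euclidean_norm_sol_explicit_formula} is well-defined. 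Feasibility is then immediate from the direct substitution $\mtx{B}\mvec{y}_2^* = (\mtx{B}\mtx{B}^T)(\mtx{B}\mtx{B}^T)^{-1}\mvec{z}^* = \mvec{z}^*$, which confirms $\mvec{y}_2^* \in S^*$.

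For optimality and uniqueness, the key step is the orthogonal decomposition of $\mathbb{R}^d$ into $\range(\mtx{B}^T)$ and $\ker(\mtx{B})$. I would take an arbitrary feasible $\mvec{y} \in S^*$ and set $\mvec{w} = \mvec{y} - \mvec{y}_2^*$; since both points solve the linear system, $\mtx{B}\mvec{w} = \mvec{0}$, i.e. $\mvec{w} \in \ker(\mtx{B})$. On the other hand $\mvec{y}_2^* \in \range(\mtx{B}^T)$ by construction, and because $\range(\mtx{B}^T) = \ker(\mtx{B})^{\perp}$ we get $\langle \mvec{y}_2^*, \mvec{w} \rangle = 0$. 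The Pythagorean identity then yields $\| \mvec{y} \|_2^2 = \| \mvec{y}_2^* \|_2^2 + \| \mvec{w} \|_2^2 \geq \| \mvec{y}_2^* \|_2^2$, with equality precisely when $\mvec{w} = \mvec{0}$, that is $\mvec{y} = \mvec{y}_2^*$. This simultaneously shows that $\mvec{y}_2^*$ attains the minimum and that it is the unique minimizer.

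There is no genuine obstacle here: every ingredient is standard linear algebra, and the only place probability enters is in guaranteeing invertibility of $\mtx{B}\mtx{B}^T$, which has already been secured. The only point requiring mild care is the justification of $\range(\mtx{B}^T) = \ker(\mtx{B})^{\perp}$ (the fundamental theorem of linear algebra), after which the orthogonality computation is purely mechanical.
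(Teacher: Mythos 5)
Your proposal is correct and follows essentially the same route as the paper: both reduce \eqref{eq: minimal_2_norm} via \Cref{thm: By=z} to minimising $\|\mvec{y}\|_2$ subject to $\mtx{B}\mvec{y} = \mvec{z}^*$ and then identify $\mvec{y}_2^* = \mtx{B}^T(\mtx{B}\mtx{B}^T)^{-1}\mvec{z}^*$ as the least-norm solution. The only difference is that the paper simply cites this classical formula, whereas you supply its standard justification (almost-sure invertibility of $\mtx{B}\mtx{B}^T$, feasibility, and the Pythagorean orthogonality argument for optimality and uniqueness), which is a welcome but not conceptually distinct addition.
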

\begin{proof}
	It follows from \Cref{thm: By=z} that the solution(s) of \eqref{eq: minimal_2_norm} must be equal to the solution(s) of the following problem
	\begin{equation*} \label{eq: minimal_2_norm_By=z}
	\begin{aligned}
	\min \;\;& \| \mvec{y} \|_2^2 \\
	\text{s.t.}\;\; & \mtx{B}\mvec{y} = \mvec{z}^*,
	\end{aligned}
	\end{equation*}
	which has the solution \eqref{eq: Euclidean_norm_sol_explicit_formula}. 
\end{proof}

\subsection{Distribution of minimal Euclidean norm minimizer}
The present section derives the distribution of $\mvec{y}_2^*$ and its Euclidean norm. 
\subsubsection*{The distribution of the Euclidean norm of $\mvec{y}_2^*$}
\begin{theorem} \label{thm: x^*_T/y^*_2_follow_chi_square}
	Let \Cref{ass: REGO_fun_eff_dim} hold and let $\mvec{x}_{\top}^*$ and $\mvec{y}^*_2$ be defined as in \Cref{def: REGO_choice_of_x^*} and \eqref{eq: minimal_2_norm}, respectively. Then, $\mvec{y}^*_2$ satisfies
	$$ \frac{\| \mvec{x}_{\top}^* \|^2_2}{\| \mvec{y}^*_2 \|^2_2} \sim \chi^2_{d-d_e+1}.$$
\end{theorem}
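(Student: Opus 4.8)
The plan is to reduce the stated ratio to a quadratic form in an inverse Wishart matrix and then evaluate it by a rotation-plus-conditioning argument. First I would use the explicit formula from \Cref{cor: y_2_explicit_formula}, $\mvec{y}_2^* = \mtx{B}^T(\mtx{B}\mtx{B}^T)^{-1}\mvec{z}^*$, to compute the denominator directly:
$$ \|\mvec{y}_2^*\|_2^2 = (\mvec{z}^*)^T (\mtx{B}\mtx{B}^T)^{-1} \mtx{B}\mtx{B}^T (\mtx{B}\mtx{B}^T)^{-1}\mvec{z}^* = (\mvec{z}^*)^T (\mtx{B}\mtx{B}^T)^{-1}\mvec{z}^*. $$
Combining this with the norm identity $\|\mvec{x}_{\top}^*\|_2 = \|\mvec{z}^*\|_2$ recorded in \Cref{rem: defined_B_and_z}, the assertion reduces to showing that, with $\mtx{W} := \mtx{B}\mtx{B}^T$ (a $d_e \times d_e$ Wishart matrix built from the $d_e \times d$ Gaussian $\mtx{B}$, almost surely invertible by \Cref{thm: Wishart_is_nonsingular}),
$$ \frac{\|\mvec{z}^*\|_2^2}{(\mvec{z}^*)^T \mtx{W}^{-1}\mvec{z}^*} \sim \chi^2_{d-d_e+1}. $$

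Next I would eliminate the dependence on the direction of $\mvec{z}^*$ using rotational invariance. Picking an orthogonal $\mtx{Q}$ with $\mtx{Q}\mvec{e}_1 = \mvec{z}^*/\|\mvec{z}^*\|_2$ (where $\mvec{e}_1$ is the first coordinate vector) and using that $\mtx{Q}^T\mtx{B}$ has the same distribution as $\mtx{B}$ by the orthogonal invariance of Gaussian matrices (\Cref{thm: orthog_inv_of_Gaussian_matrices}), one gets
$$ \frac{\|\mvec{z}^*\|_2^2}{(\mvec{z}^*)^T \mtx{W}^{-1}\mvec{z}^*} \stackrel{law}{=} \frac{1}{\mvec{e}_1^T \mtx{W}^{-1}\mvec{e}_1} = \frac{1}{(\mtx{W}^{-1})_{11}}, $$
so it suffices to determine the distribution of $1/(\mtx{W}^{-1})_{11}$.

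The crux is to recognise $1/(\mtx{W}^{-1})_{11}$ as a Schur complement and to compute its rank. Partitioning the rows of $\mtx{B}$ as $\mtx{B} = (\mvec{b}_1^T;\ \mtx{B}_2)$, with $\mvec{b}_1 \in \mathbb{R}^d$ the first row and $\mtx{B}_2$ the remaining $(d_e-1)\times d$ block, the standard block-inverse identity gives
$$ \frac{1}{(\mtx{W}^{-1})_{11}} = \|\mvec{b}_1\|_2^2 - \mvec{b}_1^T\mtx{B}_2^T(\mtx{B}_2\mtx{B}_2^T)^{-1}\mtx{B}_2\mvec{b}_1 = \mvec{b}_1^T(\mtx{I}_d - \mtx{P})\mvec{b}_1, $$
where $\mtx{P} = \mtx{B}_2^T(\mtx{B}_2\mtx{B}_2^T)^{-1}\mtx{B}_2$ is the orthogonal projection onto $\range(\mtx{B}_2^T)$, a subspace of $\mathbb{R}^d$ of dimension $d_e-1$ almost surely (since $\mtx{B}_2$ has full row rank by \Cref{thm: Wishart_is_nonsingular} applied to $\mtx{B}_2\mtx{B}_2^T$). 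Hence $\mtx{I}_d - \mtx{P}$ is an orthogonal projection of rank $d-d_e+1$. Conditioning on $\mtx{B}_2$, the row $\mvec{b}_1$ is an independent standard Gaussian vector in $\mathbb{R}^d$, so $\mvec{b}_1^T(\mtx{I}_d-\mtx{P})\mvec{b}_1 \sim \chi^2_{d-d_e+1}$; because this conditional law is free of $\mtx{B}_2$, it is also the unconditional law, which finishes the argument.

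The main obstacle I anticipate is this last step: correctly writing $(\mtx{W}^{-1})_{11}$ as the reciprocal of the Schur complement, and verifying that $\mtx{I}_d - \mtx{P}$ has rank exactly $d-d_e+1$ almost surely (which is where the hypothesis $d \geq d_e$ enters, guaranteeing $\mtx{B}_2$ has full row rank). The rotational reduction and the conditional chi-squared identity for a quadratic form of a standard Gaussian against a projection are then routine.
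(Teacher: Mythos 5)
Your proposal is correct, and it differs from the paper's proof in one substantive way: where you \emph{prove} the key distributional fact, the paper \emph{cites} it. The paper's argument (via \Cref{cor: y_2_explicit_formula} and \Cref{lemma: inverse_min_norm_solution_follow_chi-square}) performs exactly your first reduction --- $\|\mvec{y}_2^*\|_2^2 = (\mvec{z}^*)^T(\mtx{B}\mtx{B}^T)^{-1}\mvec{z}^*$ followed by $\|\mvec{z}^*\| = \|\mvec{x}_{\top}^*\|$ --- and then invokes a quoted random-matrix result (\Cref{lemma: Rayleigh_quotient_chi_square}, taken from Gupta and Nagar) asserting that $\|\mvec{z}\|^2 / \bigl(\mvec{z}^T(\mtx{A}^T\mtx{A})^{-1}\mvec{z}\bigr) \sim \chi^2_{D-d+1}$ for fixed nonzero $\mvec{z}$. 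Your rotation-plus-Schur-complement-plus-conditioning argument is precisely a self-contained proof of that cited lemma: the rotational reduction to $1/(\mtx{W}^{-1})_{11}$ is justified by \Cref{thm: orthog_inv_of_Gaussian_matrices}, the block-inverse identity $1/(\mtx{W}^{-1})_{11} = \mvec{b}_1^T(\mtx{I}_d - \mtx{P})\mvec{b}_1$ is standard, the almost-sure rank count for $\mtx{I}_d - \mtx{P}$ follows from \Cref{thm: Wishart_is_nonsingular} applied to $\mtx{B}_2\mtx{B}_2^T$, and the conditional quadratic-form step is sound because the rows of $\mtx{B}$ are independent, so conditioning on $\mtx{B}_2$ leaves $\mvec{b}_1$ standard Gaussian and yields a conditional $\chi^2_{d-d_e+1}$ law that is free of $\mtx{B}_2$, hence holds unconditionally. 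What the paper's route buys is brevity; what yours buys is transparency --- it makes visible exactly where $d \geq d_e$ enters (full row rank of $\mtx{B}_2$) and why the degrees of freedom equal $d-d_e+1$ (the codimension of the row space of $\mtx{B}_2$ in $\mathbb{R}^d$). One trivial point to add for completeness: when $d_e = 1$ the block $\mtx{B}_2$ is empty, so your Schur-complement step should be read with $\mtx{P} = \mtx{0}$, and the claim reduces directly to $\|\mvec{b}_1\|_2^2 \sim \chi^2_d$, consistent with your formula.
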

\begin{proof}
	The result almost immediately follows from \Cref{cor: y_2_explicit_formula} and \Cref{lemma: inverse_min_norm_solution_follow_chi-square}. These yield
	$$ \frac{\|\mvec{z}^*\|^2}{\| \mvec{y}^*_2 \|^2} \sim \chi^2_{d-d_e+1},$$
	The result is implied by $\| \mvec{z}^* \| = \|\mvec{x}_{\top}^*\|$ (see \Cref{rem: defined_B_and_z}).
\end{proof}
The above result is equivalent to saying that $\|\mvec{y}^*_2\|^2/\| \mvec{x}^*_{\top} \|^2$ follows the inverse chi-squared distribution with $d-d_e+1$ degrees of freedom (see \Cref{def: inv_chi-squared}). The theorem reveals a linear dependence of $\|\mvec{y}_2^*\|$ on $\|\mvec{x}^*_{\top} \|$; larger values of $\|\mvec{x}^*_{\top} \|$ contribute to the increase in the likelihood of $\mvec{y}_2^*$ being further away from the origin. 
The theorem also suggests that $\|\mvec{y}_2^*\|$ is independent of $D$ as long as $\| \mvec{x}^*_{\top} \|$ is fixed.  

\begin{corollary} \label{cor: prob_of_y_2^*<delta}
	Let \Cref{ass: REGO_fun_eff_dim} hold. Let $\mvec{x}_{\top}^*$ and $\mvec{y}_2^*$ be defined as in \Cref{def: REGO_choice_of_x^*} and \eqref{eq: minimal_2_norm}, respectively. Then,
	$$ \prob[\| \mvec{y}_2^* \|_2 \leq \delta] = \prob\bigg[\chi^2_{d-d_e+1} \geq \frac{\| \mvec{x}^*_{\top} \|_2^2}{\delta^2}\bigg] $$
	for any $\delta > 0$. 
\end{corollary}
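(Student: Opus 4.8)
The plan is to derive this corollary directly from \Cref{thm: x^*_T/y^*_2_follow_chi_square} by a monotone rearrangement of the inequality that defines the event $\{\|\mvec{y}_2^*\|_2 \leq \delta\}$. The key observation is that, under \Cref{def: REGO_choice_of_x^*}, we have $\mvec{x}_{\top}^* \neq \mvec{0}$, so $\|\mvec{x}_{\top}^*\|_2^2 > 0$; moreover \Cref{thm: x^*_T/y^*_2_follow_chi_square} guarantees that $\|\mvec{x}_{\top}^*\|_2^2 / \|\mvec{y}_2^*\|_2^2$ is a genuine (almost surely positive and finite) chi-squared variable, so that $\|\mvec{y}_2^*\|_2 > 0$ almost surely and all the reciprocals and divisions used below are legitimate.

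First I would note that, since $\delta > 0$ and both $\|\mvec{y}_2^*\|_2$ and $\delta$ are nonnegative, the event $\{\|\mvec{y}_2^*\|_2 \leq \delta\}$ coincides with $\{\|\mvec{y}_2^*\|_2^2 \leq \delta^2\}$. Because $t \mapsto 1/t$ is strictly decreasing on $(0,\infty)$, on the almost-sure event $\|\mvec{y}_2^*\|_2 > 0$ this is in turn equivalent to $\{1/\|\mvec{y}_2^*\|_2^2 \geq 1/\delta^2\}$. Multiplying through by the fixed positive constant $\|\mvec{x}_{\top}^*\|_2^2$ preserves the inequality and yields the equivalence of $\{\|\mvec{y}_2^*\|_2 \leq \delta\}$ with $\{\|\mvec{x}_{\top}^*\|_2^2 / \|\mvec{y}_2^*\|_2^2 \geq \|\mvec{x}_{\top}^*\|_2^2 / \delta^2\}$ as events on the probability space.

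Finally, taking probabilities of these identical events and substituting the distributional identity $\|\mvec{x}_{\top}^*\|_2^2 / \|\mvec{y}_2^*\|_2^2 \sim \chi^2_{d-d_e+1}$ supplied by \Cref{thm: x^*_T/y^*_2_follow_chi_square} gives exactly the claimed formula. I do not anticipate a genuine obstacle here: the whole argument is a deterministic rearrangement of a single inequality, and the only points requiring care are confirming $\|\mvec{x}_{\top}^*\|_2 > 0$ (which holds by the standing assumption on $\mvec{x}_{\top}^*$ in \Cref{def: REGO_choice_of_x^*}) and that $\|\mvec{y}_2^*\|_2 > 0$ almost surely (which is forced by the ratio being chi-squared distributed), so that the strictly decreasing map $t \mapsto \|\mvec{x}_{\top}^*\|_2^2 / t$ is applied only on $(0,\infty)$ and hence preserves the set equality of the two events.
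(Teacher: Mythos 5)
Your proposal is correct and follows essentially the same route as the paper: the paper's proof likewise rewrites the event $\{\|\mvec{y}_2^*\|_2 \leq \delta\}$ as $\{\|\mvec{x}_{\top}^*\|_2^2/\|\mvec{y}_2^*\|_2^2 \geq \|\mvec{x}_{\top}^*\|_2^2/\delta^2\}$ (via the substitution $\epsilon = \|\mvec{x}_{\top}^*\|_2/\delta$) and then invokes \Cref{thm: x^*_T/y^*_2_follow_chi_square}. Your only addition is the explicit verification that $\|\mvec{x}_{\top}^*\|_2 > 0$ and $\|\mvec{y}_2^*\|_2 > 0$ almost surely, which the paper leaves implicit; this is a harmless refinement, not a different argument.
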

\begin{proof}
	For any $\epsilon > 0$, we have
	$$ \prob\bigg[ \|\mvec{y}^*_2\|_2 \leq \frac{\|\mvec{x}_{\top}^*\|_2}{\epsilon} \bigg] =  \prob\bigg[ \frac{\| \mvec{x}_{\top}^* \|^2_2}{\| \mvec{y}^*_2 \|^2_2} \geq \epsilon^2 \bigg] = \prob[ \chi^2_{d-d_e+1} \geq \epsilon^2 ],$$
	where the second equality follows from \Cref{thm: x^*_T/y^*_2_follow_chi_square}. By letting $\epsilon = \|\mvec{x}_{\top}^*\|_2/\delta$, we obtain the result. 
\end{proof}

\begin{corollary} \label{cor: exp_val_of_norm_y_2^*}
	Let \Cref{ass: REGO_fun_eff_dim} hold and let $\mvec{x}_{\top}^*$ and $\mvec{y}^*_2$ be defined as in \Cref{def: REGO_choice_of_x^*} and \eqref{eq: minimal_2_norm}, respectively. Provided that $d-d_e > 1$ we have
	\begin{equation}\label{eq: expected_value_of_norm_of_y_2^*}
	\mathbb{E}[\| \mvec{y}_2^* \|^2] = \frac{\| \mvec{x}_{\top}^* \|^2}{d-d_e-1}.
	\end{equation}
\end{corollary}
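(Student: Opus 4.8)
The plan is to reduce the whole computation to the first negative moment of a chi-squared variable. By \Cref{thm: x^*_T/y^*_2_follow_chi_square}, the random variable $W := \|\mvec{x}_{\top}^*\|^2/\|\mvec{y}_2^*\|^2$ is distributed as $\chi^2_{d-d_e+1}$. Rearranging gives $\|\mvec{y}_2^*\|^2 = \|\mvec{x}_{\top}^*\|^2/W$, and since $\mvec{x}_{\top}^*$ is fixed by the (deterministic) objective while all the randomness resides in $W$ (through the Gaussian matrix $\mtx{A}$, hence $\mtx{B}$), the scalar $\|\mvec{x}_{\top}^*\|^2$ factors out of the expectation. Thus the corollary reduces to the identity $\mathbb{E}[\|\mvec{y}_2^*\|^2] = \|\mvec{x}_{\top}^*\|^2\,\mathbb{E}[W^{-1}]$, and it remains only to evaluate $\mathbb{E}[W^{-1}]$ for $W \sim \chi^2_{d-d_e+1}$.

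The second step is to compute this first negative moment. Writing out the chi-squared density and integrating $x^{-1}$ against it, the substitution $u = x/2$ turns the integral into a Gamma function, and the identity $\Gamma(m-1) = \Gamma(m)/(m-1)$ with $m = (d-d_e+1)/2$ collapses it to the standard formula $\mathbb{E}[W^{-1}] = 1/(n-2)$ valid for $W \sim \chi^2_n$. Setting $n = d - d_e + 1$ yields $\mathbb{E}[W^{-1}] = 1/(d-d_e-1)$, and combining this with the factorization above delivers the claimed expression \eqref{eq: expected_value_of_norm_of_y_2^*}.

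I would then confirm the role of the hypothesis $d - d_e > 1$: the negative moment $\mathbb{E}[W^{-1}]$ is finite precisely when $n > 2$, i.e.\ when $d - d_e + 1 > 2$, which is exactly the stated condition; for $d - d_e \leq 1$ the integrand $x^{n/2-2}e^{-x/2}$ is non-integrable near the origin and the expectation is infinite, so the assumption is genuinely necessary rather than merely technical. There is no serious obstacle here — the only points deserving care are verifying that $\|\mvec{x}_{\top}^*\|^2$ is genuinely deterministic (so it may be pulled outside the expectation) and tracking the degrees-of-freedom bookkeeping so that the chi-squared denominator $n-2$ correctly becomes $d - d_e - 1$ in the final answer.
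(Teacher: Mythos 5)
Your proof is correct and follows essentially the same route as the paper: both invoke \Cref{thm: x^*_T/y^*_2_follow_chi_square} to identify $\|\mvec{y}_2^*\|^2/\|\mvec{x}_{\top}^*\|^2$ as an inverse chi-squared variable with $n = d-d_e+1$ degrees of freedom, pull out the deterministic factor $\|\mvec{x}_{\top}^*\|^2$, and use $\mathbb{E}[1/\chi^2_n] = 1/(n-2)$ for $n>2$. The only difference is cosmetic: the paper cites this last fact as \Cref{lemma: expectation_inverse_chi_squared} (quoted from the literature), whereas you derive it directly via the Gamma-function integral and also note why $d-d_e>1$ is genuinely necessary --- a harmless, slightly more self-contained presentation of the same argument.
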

\begin{proof}
	Let $W$ follow the inverse chi-squared distribution with $d-d_e +1$ degrees of freedom. Then, $W \stackrel{law}{=} \| \mvec{y}_2^* \|^2/\| \mvec{x}_{\top}^* \|^2$ by \Cref{thm: x^*_T/y^*_2_follow_chi_square}. By applying \Cref{lemma: expectation_inverse_chi_squared}, we obtain
	$$  \mathbb{E}[\| \mvec{y}_2^* \|^2] = \mathbb{E}[\| \mvec{x}_{\top}^* \|^2 W] = \frac{\| \mvec{x}_{\top}^* \|^2}{d-d_e-1}$$
	for $d - d_e > 1$. 
\end{proof}
The expected value in \eqref{eq: expected_value_of_norm_of_y_2^*} is inversely proportional to $d-d_e$. In other words, for a fixed $d_e$,  larger values of the dimension of the embedding subspace bring $\mvec{y}_2^*$ closer to the origin. This observation indicates that the increase in $d$ allows us to decrease $\delta$ whilst the probability of $\mvec{y}_2^* \in \mathcal{Y}$ is kept constant. 

\subsubsection*{The probability density function}
The following theorem derives the probability density function of $\mvec{y}_2^*$. 

\begin{theorem} \label{thm: pdf_of_y^*}
	Let \Cref{ass: REGO_fun_eff_dim} hold and let $\mvec{x}_{\top}^*$ and $\mvec{y}^*_2$ be defined as in \Cref{def: REGO_choice_of_x^*} and \eqref{eq: minimal_2_norm}, respectively. Then, the probability density function of $\mvec{y}^*_2$ is given by
	\begin{equation*} \label{eq: pdf_of_y}
	g^*(\mvec{y}) = \pi^{-d/2} \bigg(\frac{\Gamma(d/2)}{\Gamma(n/2)}\bigg) \bigg(\frac{\| \mvec{x}_{\top}^* \|}{\sqrt{2}}\bigg)^n (\mvec{y}^T\mvec{y})^{-(n+d)/2} e^{-\|\mvec{x}_{\top}^*\|^2/(2\mvec{y}^T\mvec{y})},
	\end{equation*}
	where $n = d-d_e+1$.
\end{theorem}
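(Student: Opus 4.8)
The plan is to exploit the explicit formula $\mvec{y}_2^* = \mtx{B}^T(\mtx{B}\mtx{B}^T)^{-1}\mvec{z}^*$ from \Cref{cor: y_2_explicit_formula} to first show that $\mvec{y}_2^*$ has a spherically symmetric distribution on $\mathbb{R}^d$, and then to reconstruct its full density from the already-known distribution of its norm (\Cref{thm: x^*_T/y^*_2_follow_chi_square}). The guiding observation is that the target $g^*(\mvec{y})$ depends on $\mvec{y}$ only through $\mvec{y}^T\mvec{y} = \|\mvec{y}\|^2$, so once rotational invariance is established the problem collapses to a one-dimensional radial computation that the chi-squared result already solves.

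The key step is spherical symmetry. Fix any orthogonal $\mtx{Q} \in \mathbb{R}^{d \times d}$ and set $\wt{\mtx{B}} = \mtx{B}\mtx{Q}$. Since $\wt{\mtx{B}}\wt{\mtx{B}}^T = \mtx{B}\mtx{Q}\mtx{Q}^T\mtx{B}^T = \mtx{B}\mtx{B}^T$, the formula of \Cref{cor: y_2_explicit_formula} gives
\[
\mtx{Q}^T\mvec{y}_2^* = \mtx{Q}^T\mtx{B}^T(\mtx{B}\mtx{B}^T)^{-1}\mvec{z}^* = \wt{\mtx{B}}^T(\wt{\mtx{B}}\wt{\mtx{B}}^T)^{-1}\mvec{z}^*.
\]
By the orthogonal invariance of Gaussian matrices (\Cref{thm: orthog_inv_of_Gaussian_matrices}), right-multiplication by $\mtx{Q}$ leaves the law of $\mtx{B}$ unchanged, so $\wt{\mtx{B}} \stackrel{law}{=} \mtx{B}$ and the right-hand side has the same distribution as $\mtx{B}^T(\mtx{B}\mtx{B}^T)^{-1}\mvec{z}^* = \mvec{y}_2^*$. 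Hence $\mtx{Q}^T\mvec{y}_2^* \stackrel{law}{=} \mvec{y}_2^*$ for every orthogonal $\mtx{Q}$, i.e.\ $\mvec{y}_2^*$ is spherically distributed. Consequently its density (which exists, since by \Cref{thm: x^*_T/y^*_2_follow_chi_square} the radius has an absolutely continuous law while the angular part is uniform on the unit sphere and independent of it) must be a radial function $g^*(\mvec{y}) = \phi(\|\mvec{y}\|)$.

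It remains to identify $\phi$. Writing $R = \|\mvec{y}_2^*\|$ and $c = \|\mvec{x}_{\top}^*\|^2$, disintegrating a radial density in polar coordinates gives $\rho_R(r) = \omega_{d-1}\, r^{d-1}\phi(r)$, where $\rho_R$ is the density of $R$ and $\omega_{d-1} = 2\pi^{d/2}/\Gamma(d/2)$ is the surface area of the unit sphere in $\mathbb{R}^d$. From \Cref{thm: x^*_T/y^*_2_follow_chi_square} we have $c/R^2 \sim \chi^2_n$ with $n = d-d_e+1$; a routine change of variables turns the $\chi^2_n$ density into the density of $R = \sqrt{c/\chi^2_n}$, namely $\rho_R(r) = \tfrac{c^{n/2}}{2^{n/2-1}\Gamma(n/2)}\,r^{-n-1} e^{-c/(2r^2)}$. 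Solving $\phi(r) = \rho_R(r)/(\omega_{d-1} r^{d-1})$ and substituting $c = \|\mvec{x}_{\top}^*\|^2$ (using $\|\mvec{z}^*\| = \|\mvec{x}_{\top}^*\|$ from \Cref{rem: defined_B_and_z}), $r^2 = \mvec{y}^T\mvec{y}$, and $2^{n/2} = (\sqrt 2)^n$ yields exactly the stated $g^*(\mvec{y})$ after collecting the Gamma- and $\pi$-factors.

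The conceptual crux — and the step I expect to be the main obstacle — is the spherical-symmetry argument together with the careful justification that $\mvec{y}_2^*$ genuinely admits a radial density; everything afterwards is a deterministic change of variables plus constant bookkeeping. One should also note the implicit appeal to the rotational invariance of the reference (Lebesgue) measure, so that a rotation-invariant law with an absolutely continuous radial marginal indeed has a rotation-invariant density on $\mathbb{R}^d \setminus \{\mvec{0}\}$.
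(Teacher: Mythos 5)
Your proof is correct and follows essentially the same route as the paper: the paper's proof of \Cref{thm: pdf_of_y^*} simply invokes \Cref{cor: y_2_explicit_formula} together with \Cref{lemma: pdf_of_y_general_case}, and that lemma is established by exactly your two steps --- spherical symmetry of $\mvec{y}_2^*$ via orthogonal invariance of Gaussian matrices (\Cref{lemma: y_has_spherical_distr}), followed by recovering the density from the radial law $\|\mvec{x}_{\top}^*\|^2/\|\mvec{y}_2^*\|^2 \sim \chi^2_{d-d_e+1}$ through the polar disintegration formula (for which the paper cites \Cref{thm: pdf_of_x_Gupta} rather than deriving it by hand as you do). The only cosmetic differences are that the paper first normalizes $\|\mvec{z}^*\|=1$ and rescales at the end via a Jacobian, and that it spells out the measurability of the map $\mtx{B} \mapsto \mtx{B}^T(\mtx{B}\mtx{B}^T)^{-1}\mvec{z}^*$ needed to transfer equality in law, a technical point you leave implicit.
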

\begin{proof}
	\Cref{cor: y_2_explicit_formula} and \Cref{lemma: pdf_of_y_general_case} imply that the p.d.f. of $\mvec{y}_2^*$ is given by
	$$ g^* (\mvec{y}) = \pi^{-d/2} \bigg(\frac{\Gamma(d/2)}{\Gamma(n/2)}\bigg) \bigg(\frac{\| \mvec{z}^* \|}{\sqrt{2}}\bigg)^n   (\mvec{y}^T\mvec{y})^{-(n+d)/2} e^{-\|\mvec{z}^*\|^2/(2\mvec{y}^T\mvec{y})}.$$
	By using the equation $\| \mvec{z}^* \| = \|\mvec{x}_{\top}^*\|$, we obtain the desired result.
\end{proof}

\begin{figure}
	\centering
	\includegraphics[scale = 0.5]{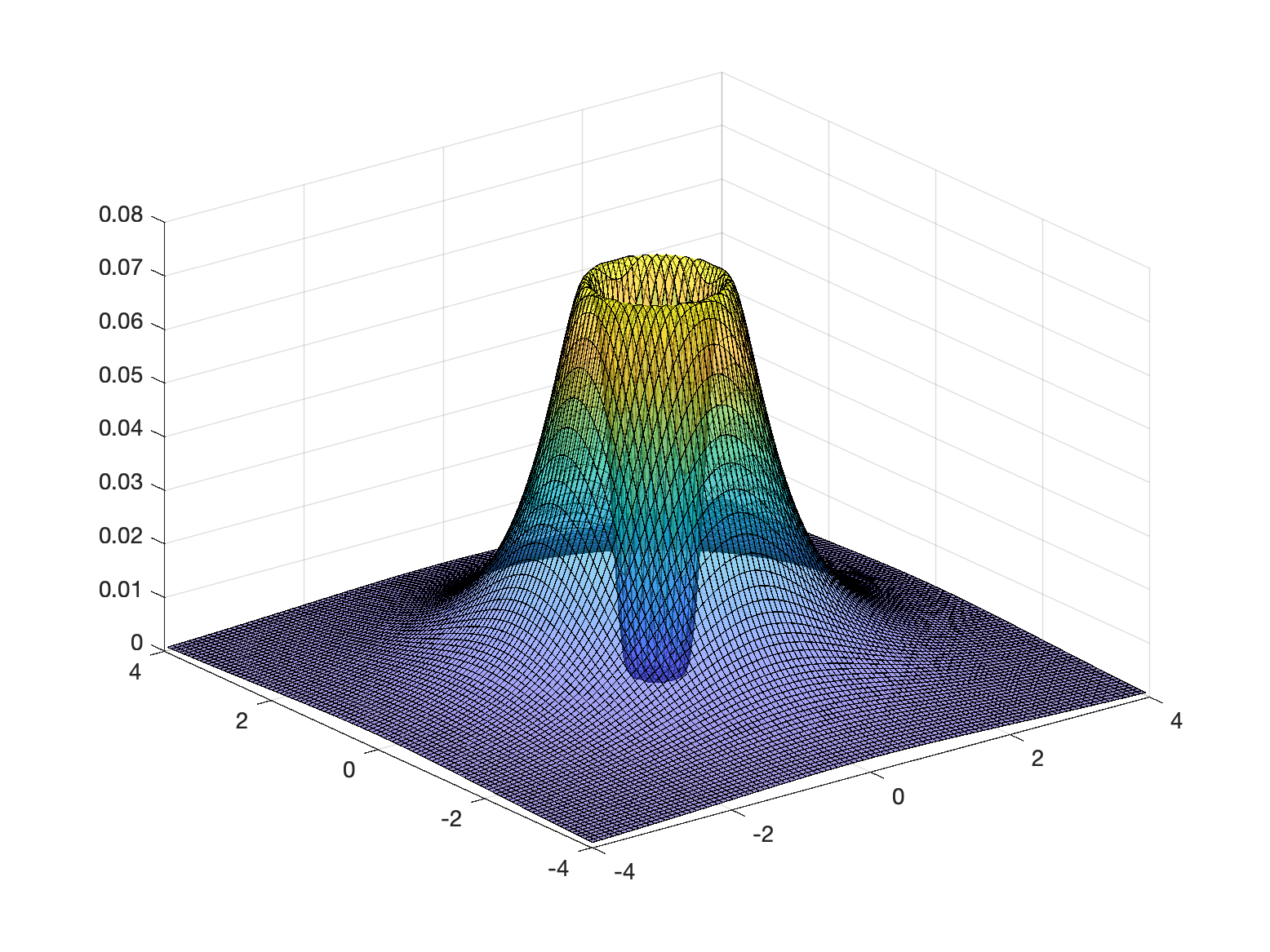}
	\caption{The illustration of the p.d.f. of $\mvec{y}_2^*$ for $d = 2$, $k = 2$ and $\mvec{x}_{\top}^* = [1 \; 1]^T$.}
	\label{fig: pdf_of_y_2^*}
\end{figure}
\Cref{fig: pdf_of_y_2^*} illustrates the p.d.f. of two-dimensional $\mvec{y}_2^*$. The shape of the p.d.f.~resembles a volcano with the mass concentrated at a certain distance from the origin suggesting that $\mvec{y}_2^*$ is unlikely to be neither too close to, nor too distant from the origin. We also note that the p.d.f.~is independent of $D$.

\section{Bounding the success of the reduced problem} \label{sec: bounding_success}
This section is the culmination of this paper's analysis. Based on the results established earlier we derive a bound for the probability of success of \eqref{eq: REGO}.

The following theorem presents a notable connection between the success of \eqref{eq: REGO} and the chi-squared distribution. 
\begin{theorem}\label{thm: prob_REGO_is_successful}
	Let \Cref{ass: REGO_fun_eff_dim} hold. Then, for any $\delta > 0$, we have
	\begin{equation} \label{eq: in_thm_prob_REGO_is_successful}
		\prob[\eqref{eq: REGO} \text{ is successful}\,] \geq \prob\bigg[\chi^2_{d-d_e+1} \geq \frac{ \min_{\mvec{x}^* \in \mathcal{G}} \|  \mvec{x}^* \|_2^2}{\delta^2}\bigg].
	\end{equation}
\end{theorem}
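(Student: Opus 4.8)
The plan is to reduce the success probability to the single affine component of $\mathcal{G}$ lying closest to the origin and then to invoke the chi-squared characterization of the minimal-norm reduced minimizer on that component. First I would establish the structural description of $\mathcal{G}$ that the surrounding text anticipates: given any $\mvec{x}^* \in \mathcal{G}$, decompose it as $\mvec{x}^* = \mvec{x}_{\top}^* + \mvec{x}_{\perp}^*$ with $\mvec{x}_{\top}^* \in \mathcal{T}$ and $\mvec{x}_{\perp}^* \in \mathcal{T}^{\perp}$; then \Cref{ass: REGO_fun_eff_dim} gives $f(\mvec{x}^*) = f(\mvec{x}_{\top}^*) = f^*$, so $\mvec{x}_{\top}^* \in \mathcal{G} \cap \mathcal{T}$ and in fact the whole translate $\mvec{x}_{\top}^* + \mathcal{T}^{\perp}$ lies in $\mathcal{G}$. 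Hence $\mathcal{G}$ is a union of such components. Since $\mvec{x}_{\top}^* \perp \mathcal{T}^{\perp}$, the Pythagorean identity $\|\mvec{x}_{\top}^* + \mvec{x}_{\perp}\|^2 = \|\mvec{x}_{\top}^*\|^2 + \|\mvec{x}_{\perp}\|^2$ shows $\mvec{x}_{\top}^*$ is the point of its own component closest to the origin, giving
\[ \min_{\mvec{x}^* \in \mathcal{G}} \|\mvec{x}^*\|_2^2 = \min_{\mvec{x}_{\top}^* \in \mathcal{G} \cap \mathcal{T}} \|\mvec{x}_{\top}^*\|_2^2 . \]
(Both minima are attained because $\mathcal{G} = f^{-1}(\{f^*\})$ is closed and the norm is coercive.)

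I would then treat the degenerate case separately. If the minimal norm is $0$, then $\mvec{0} \in \mathcal{G}$, every embedding succeeds with $\mvec{y}^* = \mvec{0}$, and the right-hand side of \eqref{eq: in_thm_prob_REGO_is_successful} equals $\prob[\chi^2_{d-d_e+1} \geq 0] = 1$, so the bound holds trivially. Otherwise, let $\mvec{x}_{\top}^*$ attain the minimum above; it is non-zero, so \Cref{def: REGO_choice_of_x^*} applies with $\mathcal{G}^* = \mvec{x}_{\top}^* + \mathcal{T}^{\perp}$, and I would take $\mvec{y}_2^*$ to be the corresponding minimal $2$-norm solution from \eqref{eq: minimal_2_norm}, for which \Cref{thm: x^*_T/y^*_2_follow_chi_square} and \Cref{cor: prob_of_y_2^*<delta} are available.

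The final step chains these facts. Because $\|\mvec{y}_2^*\|_{\infty} \leq \|\mvec{y}_2^*\|_2$, the event $\{\|\mvec{y}_2^*\|_2 \leq \delta\}$ implies $\mvec{y}_2^* \in [-\delta, \delta]^d = \mathcal{Y}$, whereupon \Cref{lemma: REGO_successful_if_y_2_is_in_Y} forces \eqref{eq: REGO} to be successful; therefore $\prob[\eqref{eq: REGO}\text{ is successful}] \geq \prob[\|\mvec{y}_2^*\|_2 \leq \delta]$. Applying \Cref{cor: prob_of_y_2^*<delta} and then the norm identity of the first paragraph rewrites this lower bound as $\prob[\chi^2_{d-d_e+1} \geq \min_{\mvec{x}^* \in \mathcal{G}} \|\mvec{x}^*\|_2^2 / \delta^2]$, which is the claim. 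The step needing the most care is the reduction in the first paragraph: I must justify that confining attention to a single, nearest component yields only a lower bound (success via any component suffices, and the nearest one maximizes $\prob[\|\mvec{y}_2^*\|_2 \leq \delta]$ since that probability decreases in $\|\mvec{x}_{\top}^*\|$), and that the passage from the Euclidean ball to the $\infty$-norm box $\mathcal{Y}$ is oriented correctly so as to produce an inequality rather than an equality.
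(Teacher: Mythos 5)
Your proof is correct and follows essentially the same route as the paper's: the chain $\prob[\eqref{eq: REGO} \text{ is successful}] \geq \prob[\mvec{y}_2^* \in \mathcal{Y}] \geq \prob[\|\mvec{y}_2^*\|_2 \leq \delta]$ via \Cref{lemma: REGO_successful_if_y_2_is_in_Y}, followed by \Cref{cor: prob_of_y_2^*<delta} and the identity $\min_{\mvec{x}^* \in \mathcal{G}} \|\mtx{U}\mtx{U}^T\mvec{x}^*\|_2 = \min_{\mvec{x}^* \in \mathcal{G}} \|\mvec{x}^*\|_2$. The only differences are organizational: you fix the nearest affine component of $\mathcal{G}$ at the outset (justifying attainment of the minimum via closedness and coercivity, a point the paper leaves implicit) and prove the norm identity by a direct Pythagorean argument, whereas the paper derives the bound for every $\mvec{x}^* \in \mathcal{G}$, maximizes it using monotonicity of the chi-squared tail, and establishes the same identity through the set-equality construction $\mathcal{G} = \mathcal{S}$.
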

\begin{proof}
	Note the following relationship between the probabilities:
	\begin{align}\label{ineq: RP_succ_series_of_bounds}
	\prob[\eqref{eq: REGO} \text{ is successful}] \geq \prob[\mvec{y}^*_{2} \in \mathcal{Y}] =  \prob[\|\mvec{y}^*_{2}\|_{\infty} \leq \delta] \geq \prob[\|\mvec{y}^*_{2}\|_{2} \leq \delta],
	\end{align}
	where the first inequality follows from \Cref{lemma: REGO_successful_if_y_2_is_in_Y} and where the second inequality is implied by $\| \mvec{y}^*_{2}\|_{\infty}  \leq \|\mvec{y}^*_{2}\|_{2}$. By applying 
	\Cref{cor: prob_of_y_2^*<delta} to the last probability in \eqref{ineq: RP_succ_series_of_bounds} and using the definition of $\mvec{x}_{\top}^*$ given in \Cref{def: REGO_choice_of_x^*}, we obtain 
	\begin{equation}\label{eq: chi^2> norm_of_UU^Tx^*/delta^2}
		\prob[\eqref{eq: REGO} \text{ is successful}] \geq \prob\bigg[\chi^2_{d-d_e+1} \geq \frac{ \| \mtx{U}\mtx{U}^T \mvec{x}^* \|_2^2}{\delta^2}\bigg]
	\end{equation}
	for any $\delta > 0$ and any $\mvec{x}^* \in \mathcal{G}$ such that $ \| \mtx{U}\mtx{U}^T \mvec{x}^* \|_2 \neq 0$. Note that \eqref{eq: chi^2> norm_of_UU^Tx^*/delta^2} also holds for $\| \mtx{U}\mtx{U}^T \mvec{x}^* \|_2 = 0$ since, in this case, \eqref{eq: REGO} is successful with probability 1 (see the discussion preceding \Cref{def: REGO_choice_of_x^*}). Hence, \eqref{eq: chi^2> norm_of_UU^Tx^*/delta^2} holds for any $\mvec{x}^* \in \mathcal{G}$, which then implies 
	$$ \prob[\eqref{eq: REGO} \text{ is successful}] \geq \max_{\mvec{x}^* \in \mathcal{G}} \prob\bigg[\chi^2_{d-d_e+1} \geq \frac{ \| \mtx{U}\mtx{U}^T \mvec{x}^* \|_2^2}{\delta^2}\bigg] = \prob\bigg[\chi^2_{d-d_e+1} \geq \frac{ \min_{\mvec{x}^* \in \mathcal{G}} \| \mtx{U}\mtx{U}^T \mvec{x}^* \|_2^2}{\delta^2}\bigg],$$
	where the equality follows from the fact that the tail distribution $\prob[X>x]$ of any random variable $X$ is a monotonically decreasing function in $x$. 
	
	In what follows, we show that $\min_{\mvec{x}^* \in \mathcal{G}} \| \mtx{U}\mtx{U}^T \mvec{x}^* \|_2^2 = \min_{\mvec{x}^* \in \mathcal{G}} \| \mvec{x}^* \|_2^2$. Define sets $\mathcal{Z} = \{ \mvec{z} \in \mathbb{R}^d : \mtx{U}\mvec{z} = \mtx{U}\mtx{U}^T\mvec{x}^*, \mvec{x}^* \in \mathcal{G} \}$ and $\mathcal{S} = \{ \mtx{U}\mvec{z} + \mvec{V}\mvec{c} : \mvec{z} \in \mathcal{Z}, \mvec{c} \in \mathbb{R}^{D-d_e} \}$, where $\mtx{V}$ is defined in \Cref{def: REGO_choice_of_x^*}. First, we establish that $\mathcal{G} = \mathcal{S}$ by showing that $\mathcal{G} \subseteq \mathcal{S}$ and that $\mathcal{S} \subseteq \mathcal{G}$. 
	
	Let $\mvec{x}^* \in \mathcal{G}$. We can write $\mvec{x}^* = \mtx{U}\mtx{U}^T \mvec{x}^* + \mtx{V}\mtx{V}^T\mvec{x}^*$ since $\mtx{U}\mtx{U}^T + \mtx{V}\mtx{V}^T = \mtx{I}$. Let $\mvec{z} = \mtx{U}^T\mvec{x}^*$ and $\mvec{c} = \mtx{V}^T\mvec{x}^*$ and note that $\mvec{z} \in \mathcal{Z}$ and $\mvec{c} \in \mathbb{R}^{D-d_e}$. Hence, $\mvec{x}^* \in \mathcal{S}$, which proves that $\mathcal{G} \subseteq \mathcal{S}$. 
	
	Let $\mvec{x}^* \in \mathcal{S}$. Then, $\mvec{x}^* = \mtx{U}\mvec{z} + \mtx{V}\mvec{c}$ for some $\mvec{z}\in\mathcal{Z}$ and $\mvec{c} \in \mathbb{R}^{D-d_e}$. We have
	$$ f(\mvec{x}^*) = f(\mtx{U}\mvec{z}+\mtx{V}\mvec{c}) = f(\mtx{U}\mvec{z}) = f(\mtx{U}\mtx{U}^T\mvec{x}^*) = f(\mvec{x}_{\top}^*) = f^*,$$
	where the second equality follows from the assumption that $f$ has low effective dimensionality and the fact that $\mtx{V}\mvec{c} \in \mathcal{T}^{\perp}$, the fourth equality follows from the definition of $\mvec{x}_{\top}^*$ (given in \Cref{def: REGO_choice_of_x^*}) and the last equality follows from \eqref{eq: f^*=f(x_top^*)}. Hence, by definition of $\mathcal{G}$, $\mvec{x}^* \in \mathcal{G}$. This proves that $\mathcal{S} \subseteq \mathcal{G}$.
	
	Finally, we have
	\begin{align*}
		\min_{\mvec{x}^* \in \mathcal{G}} \| \mvec{x}^* \|_2^2 = \min_{\mvec{x}^* \in \mathcal{S}} \| \mvec{x}^* \|_2^2 & = \min_{\mvec{z} \in \mathcal{Z}, \, \mvec{c} \in \mathbb{R}^{D-d_e}} \| \mtx{U}\mvec{z} + \mtx{V}\mvec{c} \|_2^2 & \text{(since $\mathcal{G} = \mathcal{S}$ and by definition of $\mathcal{S}$)}  \\
		 & = \min_{\mvec{z} \in \mathcal{Z}, \, \mvec{c} \in \mathbb{R}^{D-d_e}}  \| \mtx{U} \mvec{z}\|_2^2 + \| \mtx{V}\mvec{c} \|_2^2 & \text{(since $\mtx{U}^T\mtx{V} = \mtx{V}^T\mtx{U} = \mtx{0}$)} \\
		 & = \min_{\mvec{z} \in \mathcal{Z}, \, \mvec{c} \in \mathbb{R}^{D-d_e}}  \| \mtx{U}\mvec{z} \|^2_2 + \| \mvec{c} \|_2^2 & \text{(since $\mtx{V}$ is orthogonal)}\\
		 & = \min_{\mvec{z}\in\mathcal{Z}} \| \mtx{U}\mvec{z} \|_2^2 +  \min_{\mvec{c}\in \mathbb{R}^{D-d_e}} \| \mvec{c} \|_2^2 \\
		 & = \min_{\mvec{z}\in\mathcal{Z}} \| \mtx{U} \mvec{z} \|_2^2 + 0  \\
		 & = \min_{\mvec{x}^* \in \mathcal{G}} \| \mtx{U}\mtx{U}^T \mvec{x}^* \|_2^2 & \text{(by definition of $\mathcal{Z}$)}
	\end{align*}

\end{proof}
Using \Cref{thm: prob_REGO_is_successful}, one can now bound the success of \eqref{eq: REGO} by applying any tail bound on the chi-squared distribution. We use the bound derived in \Cref{lemma: chi-sq_tail_bound}. 
\begin{corollary} \label{cor: REGO_successful_lower_bound}
	Let \Cref{ass: REGO_fun_eff_dim} hold and let $\mu = \min_{\mvec{x}^* \in \mathcal{G}} \|  \mvec{x}^* \|_2$. Then, for any $\delta > 0$, we have
	\begin{equation}\label{ineq: REGO_successful_bound}
		\prob[\eqref{eq: REGO} \text{ is successful}\,] \geq 1 -  C(n) \bigg(1+ \frac{n}{2}e^{-\mu^2/(2\delta^2)}\bigg)\bigg(\frac{\mu}{\sqrt{2}\delta}\bigg)^{n},
	\end{equation}
	where $n = d-d_e+1$ and 
	$$ C(n) = \frac{4}{n(n+2)\Gamma\big( n/2 \big)}.$$
\end{corollary}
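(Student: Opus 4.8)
The plan is to obtain \eqref{ineq: REGO_successful_bound} by feeding a concrete lower-tail estimate for the chi-squared distribution into the probabilistic lower bound already established in \Cref{thm: prob_REGO_is_successful}. Writing $n = d - d_e + 1$ and recalling that $\mu^2 = \min_{\mvec{x}^* \in \mathcal{G}} \| \mvec{x}^* \|_2^2$, \Cref{thm: prob_REGO_is_successful} gives directly
\[
\prob[\eqref{eq: REGO}\text{ is successful}] \geq \prob\Big[\chi^2_n \geq \tfrac{\mu^2}{\delta^2}\Big] = 1 - \prob\Big[\chi^2_n \leq \tfrac{\mu^2}{\delta^2}\Big],
\]
where the last equality uses that $\chi^2_n$ has a continuous distribution. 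Thus the corollary reduces to bounding the cumulative distribution function $\prob[\chi^2_n \leq \mu^2/\delta^2]$ from above by $C(n)\big(1 + \tfrac{n}{2}e^{-\mu^2/(2\delta^2)}\big)\big(\mu/(\sqrt{2}\delta)\big)^{n}$, which is exactly the content of \Cref{lemma: chi-sq_tail_bound} evaluated at the point $\mu^2/\delta^2$. Everything else in the corollary is then a matter of substitution.

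The real substance therefore resides in \Cref{lemma: chi-sq_tail_bound}, and I would establish the required tail bound through the lower incomplete gamma function. Setting $s = n/2$ and $x = \mu^2/(2\delta^2)$ and using the identity $\prob[\chi^2_n \leq 2x] = \gamma(s, x)/\Gamma(s)$ with $\gamma(s, x) = \int_0^x u^{s-1} e^{-u}\,du$, I would observe that $\big(\mu/(\sqrt{2}\delta)\big)^{n} = x^s$ and that $C(n)\Gamma(s) = 1/(s(s+1))$, so that the claimed bound becomes equivalent to the clean inequality
\[
\gamma(s, x) \leq \frac{x^s}{s(s+1)} + \frac{x^s e^{-x}}{s+1}, \qquad x \geq 0.
\]

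To prove this inequality I would let $F(x)$ denote the difference of the right- and left-hand sides, note that $F(0) = 0$, and differentiate. A short computation collapses the derivative to
\[
F'(x) = \frac{x^{s-1}}{s+1}\big[\,1 - (x+1)e^{-x}\,\big],
\]
which is nonnegative on $[0, \infty)$ because $e^x \geq 1 + x$; hence $F$ is nondecreasing and $F(x) \geq F(0) = 0$, yielding the inequality. I expect the only delicate points to be bookkeeping rather than conceptual: correctly performing the factor-of-two change of variables between $\chi^2_n$ and the incomplete gamma function, and checking that the constant $C(n) = 4/(n(n+2)\Gamma(n/2))$ together with the correction factor $1 + \tfrac{n}{2}e^{-\mu^2/(2\delta^2)}$ emerges precisely from $1/(s(s+1))$ and $1 + s e^{-x}$. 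The genuine crux is the monotonicity of $F$, and it rests entirely on the elementary estimate $e^x \geq 1 + x$.
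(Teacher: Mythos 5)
Your proof is correct, and its top-level structure coincides with the paper's: both feed a chi-squared lower-tail estimate into \Cref{thm: prob_REGO_is_successful} at the point $\mu^2/\delta^2$, and both reduce the corollary to the c.d.f.\ bound of \Cref{lemma: chi-sq_tail_bound}. Where you genuinely diverge is in how that lemma is justified. The paper does not prove the incomplete-gamma estimate $\gamma(n/2,x/2) \leq \tfrac{4}{n(n+2)}\big(1+\tfrac{n}{2}e^{-x/2}\big)(x/2)^{n/2}$; it cites it from an external reference (Neuman 2013, Theorem 4.1). You instead derive the equivalent inequality
\begin{equation*}
\gamma(s,x) \;\leq\; \frac{x^s}{s(s+1)} + \frac{x^s e^{-x}}{s+1}, \qquad s = n/2,
\end{equation*}
from scratch, and your bookkeeping is exact: $C(n)\,\Gamma(n/2) = 1/(s(s+1))$, $\big(\mu/(\sqrt{2}\delta)\big)^n = x^s$ at $x = \mu^2/(2\delta^2)$, and $1+\tfrac{n}{2}e^{-\mu^2/(2\delta^2)} = 1 + s e^{-x}$. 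The monotonicity argument also checks out: with $F$ the difference of the two sides, $F(0)=0$ (using $s>0$, which holds since $n = d-d_e+1 \geq 1$), and a direct computation gives
\begin{equation*}
F'(x) = \frac{x^{s-1}}{s+1}\bigl[\,1-(x+1)e^{-x}\,\bigr] \;\geq\; 0
\end{equation*}
by $e^x \geq 1+x$, so $F$ is nondecreasing and the inequality follows. What your route buys is self-containedness: the paper's chain of dependencies bottoms out in a special-functions paper, whereas you replace that citation with three lines of calculus resting only on $e^x \geq 1+x$. The trade-off is negligible here — the cited reference proves a broader family of incomplete-gamma bounds, but for this corollary nothing beyond your elementary inequality is needed.
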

\begin{proof}
	\Cref{lemma: chi-sq_tail_bound} implies that
	\begin{equation}\label{eq: prob_chi^2 > eps^2 > 1-C(n)... }
		\prob[ \chi^2_{n} \geq \epsilon^2 ] \geq 1 - C(n)\bigg(1+\frac{n}{2}e^{-\epsilon^2/2}\bigg)(\epsilon^2/2)^{n/2}
	\end{equation}
	for any $\epsilon > 0$. By letting $\epsilon = \mu/\delta$ and applying \eqref{eq: prob_chi^2 > eps^2 > 1-C(n)... } to \eqref{eq: in_thm_prob_REGO_is_successful}, we obtain the wished bound. 
\end{proof}

Let $R^*$ denote the right hand side of \eqref{ineq: REGO_successful_bound}. First, we note that $R^*$ is a function of $\mu/\delta$ and $d-d_e$. The bound reveals a linear relationship between $\mu$ and $\delta$; scaling $\mu$ and $\delta$ by the same factor does not affect the value of $R^*$. Furthermore, observe that for smaller values of $\mu$ or larger values of $\delta$, $R^*$ is closer to 1. Numerical experiments show that for large values of $n$ and/or $\mu/\delta$, the bound \eqref{ineq: REGO_successful_bound} is less tight; this is also signified by the asymptotic behaviour of $R^*$, $R^* \rightarrow -\infty$ monotonically as $\mu/\delta \rightarrow \infty$ making the bound useless for large enough $\mu/\delta$. 

It is remarkable that $R^*$ has no dependence on $D$, the dimension of the original optimization problem. This implies that larger $D$ does not diminish the success of the reduced problem as long as $\mu$ and $d_e$ are unchanged. Dependence of $R^*$ on $d-d_e$ indicates that the success is determined by the value of $d$ relative to $d_e$ and not so much by the individual values of $d$ and $d_e$. Larger (smaller) values of $d$ with respect to $d_e$ require smaller (larger) $\delta$ if $R^*$ is kept constant; knowing this fact is crucial when initializing values of $d$ and $\delta$ in practice. It displays a convenient interplay between $d$ and $\delta$ allowing more flexibility in choosing one versus another. 

\paragraph{Previous bounds.}
One can derive similar bounds for the success of \eqref{eq: REGO} by bounding $ \prob[\|\mvec{y}^*_{2}\| \leq \delta]$ in \eqref{ineq: RP_succ_series_of_bounds} using the Cauchy-Schwarz inequality. Since $\mvec{y}_2^* = \mtx{B}^T(\mtx{B}\mtx{B}^T)^{-1}\mvec{z}^* $, we have
$$ \| \mvec{y}^*_2 \| \leq \|\mtx{B}^T(\mtx{B}\mtx{B}^T)^{-1} \| \cdot \|\mvec{z}^* \|.$$
By using the fact that $\|\mtx{B}^T(\mtx{B}\mtx{B}^T)^{-1} \| = 1/s_{\min}(\mtx{B}^T)$, where $s_{\min}(\mtx{B}^T)$ denotes the smallest singular value of $\mtx{B}^T$, we obtain
$$ \prob[\|\mvec{y}^*_{2}\| \leq \delta] \geq \prob\bigg[ \frac{\| \mvec{z}^* \|}{s_{\min}(\mtx{B}^T)}  \leq \delta \bigg].$$
We can now use any suitable tail bound for the smallest singular value of the Gaussian matrix to bound the latter probability. 

Wang et al.~\cite{Wang2016}, by applying the above technique and the result in \cite{Edelman1988} to bound the singular value, derived the following bound
\begin{equation*}\label{ineq: Wang_bound}
\prob[\eqref{eq: REGO} \text{ is successful}] \geq 1 - \frac{\mu \sqrt{d_e}}{\delta}.
\end{equation*} 
Their derivation is predicated on the assumptions that $d = d_e$ and that $\mathcal{T}$ is spanned by the standard basis vectors. In \cite{Sanyang2016}, Sanyang and Kab\'{a}n extended Wang et al.'s bound to any $\delta$ satisfying $\delta> \| \mvec{x}^*_{\top} \|/(\sqrt{d}-\sqrt{d_e})$. Using the bound in \cite{Davidson2001} for $s_{\min}(\mtx{B}^T)$ they showed that
\begin{equation*} 
\prob[\eqref{eq: REGO} \text{ is successful}] \geq 1 - e^{-(\sqrt{d}-\sqrt{d_e}-\mu/\delta)^2/2}
\end{equation*}
One can also use Rudelson and Vershynin's bound in \cite[Theorem 1.1]{Rudelson2009} to obtain 
$$\prob[\eqref{eq: REGO} \text{ is successful}] \geq 1 - \bigg(\frac{C\mu}{\delta (\sqrt{d} - \sqrt{d_e-1}) }\bigg)^{d-d_e+1} - e^{-cd},$$
where $C,c > 0$ are absolute constants. This bound shows dependence of the probability on the difference $d-d_e$, which is also manifest in our bound. The Rudelson and Vershynin's bound cannot be used for practical purposes due to the unknown $C$ and $c$; we require explicit bounds to define the size of $\mathcal{Y}$.

Unlike the bounds of Wang et al. \cite{Wang2016} and Sanyang and Kaban \cite{Sanyang2016}, \Cref{cor: prob_of_y_2^*<delta} is applicable to any $d \geq d_e$ and an arbitrary subspace $\mathcal{T}$. Moreover, using the exact distribution of $\| \mvec{y}_2^* \|$ given in \Cref{cor: prob_of_y_2^*<delta}, we circumvent the application of the intermediate Cauchy-Schwarz and bound the distribution of $\| \mvec{y}_2^* \|$ directly. 

\paragraph{Affine random embeddings.} \label{par: affine_random_embeddings}
It is not difficult to extend \eqref{eq: REGO} to affine random subspace embeddings. In the affine case, we replace $\mvec{x}$ by $\mtx{A}\mvec{y} + \mvec{p}$, where $\mvec{p} \in \mathbb{R}^D$ is a fixed point. The reduced optimization problem is then given by
\begin{equation*} \label{reduced problem affine}
\begin{aligned}
\min \;\; & f(\mtx{A}\mvec{y} + \mvec{p}) \\
\text{subject to} \;\; & \mvec{y} \in \mathcal{Y},
\end{aligned}
\end{equation*}
The results that apply to the linear embeddings also apply to the affine embeddings after minor adjustments.
\Cref{thm: Wang_theorem}, for example, can be easily extended to the affine case to show that the intersection between $\mvec{p} + \range(\mtx{A})$ and $\mathcal{G}$ takes place with probability~1 if $d \geq d_e$. The affine version of the results are provable with the same assumptions except for a minor alteration in \Cref{def: REGO_choice_of_x^*}: the condition $\mvec{x}_{\top}^* \neq \mvec{0}$ changes to $\mvec{x}_{\top}^* \neq \mvec{p}$. To obtain the affine versions of \Cref{thm: x^*_T/y^*_2_follow_chi_square} and \Cref{thm: pdf_of_y^*}, replace $\mvec{x}_{\top}^*$ with $\mvec{x}_{\top}^* - \mvec{p}_{\top}$, where $\mvec{p}_{\top} = \mtx{U}\mtx{U}^T\mvec{p}$ is the orthogonal projection of $\mvec{p}$ onto $\mathcal{T}$. For the affine versions of \Cref{thm: prob_REGO_is_successful} and \Cref{cor: REGO_successful_lower_bound}, replace $\min_{\mvec{x}^* \in \mathcal{G}} \| \mvec{x}^* \|$ with $\min_{\mvec{x}^* \in \mathcal{G}} \| \mvec{x}^* - \mvec{p} \|$.

%
%
%

\section{Numerical experiments} \label{sec: Numerical_experiments}

\subsection{Choices of \eqref{eq: REGO} parameters} \label{sec: exp_setup}
\begin{figure}[!t]
	\centering
	\includegraphics[scale = 0.65]{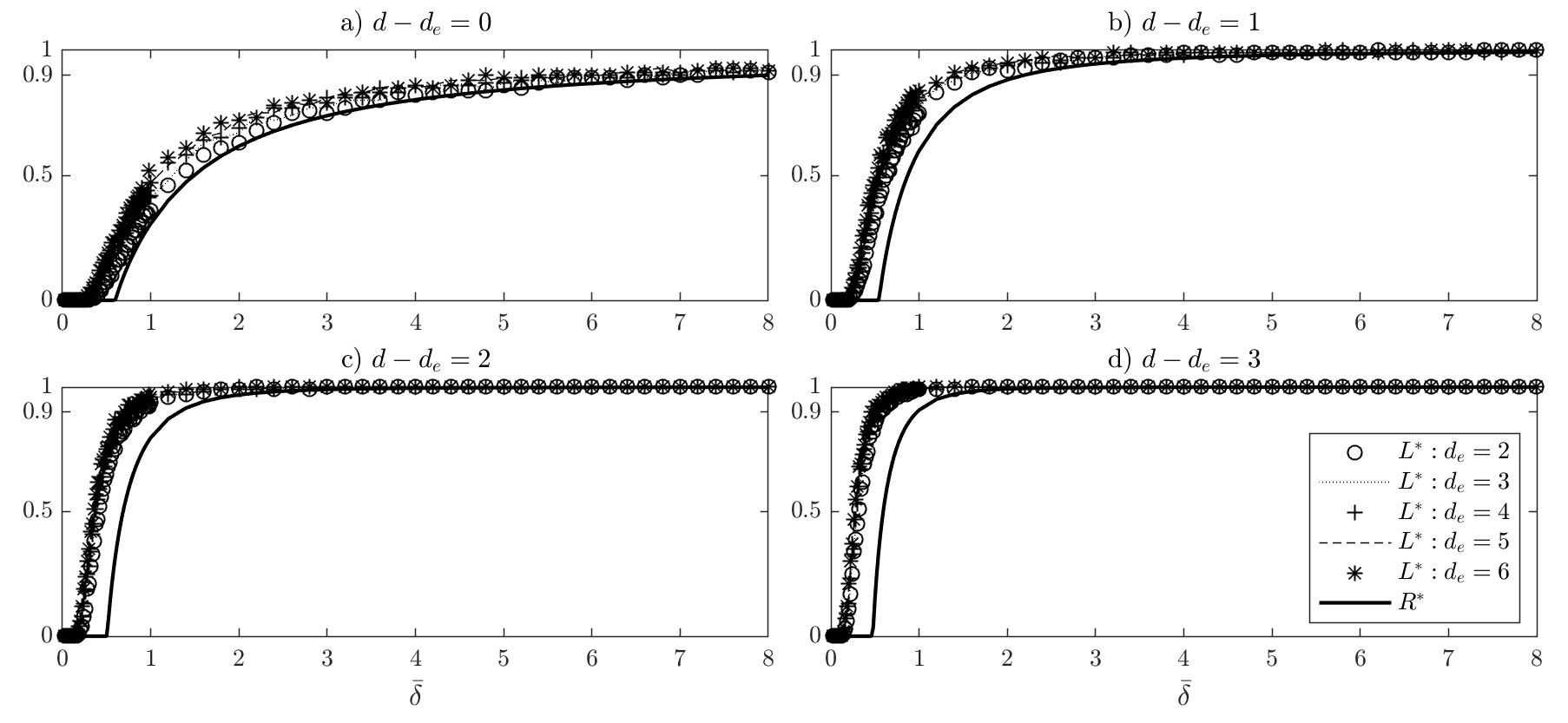}
	\caption{The four plots depict the function $R^*(\bar{\delta})$ and the estimates of $L^*(\bar{\delta})$; each plot corresponds to a particular value of $d-d_e \in \{ 0,1,2,3 \}$. Each plot contains estimates of $L^*(\bar{\delta})$ for $d_e = 2,3,4,5,6$.
	}
	\label{fig: par_est}
\end{figure}
The present section aims to test numerically the quality of the bound \eqref{ineq: REGO_successful_bound}. We will also use the results of this section to select suitable pairs of parameters $d$ and $\delta$ for \eqref{eq: REGO} in the numerical experiments later. 

Suppose that we are given a function $f$ satisfying \Cref{ass: REGO_fun_eff_dim} with the set of global minimizers $\mathcal{G}$ consisting of only one connected component. Let $\mvec{x}_{\top}^*$ for $f$ be defined as in \Cref{def: REGO_choice_of_x^*} and $\mvec{z}$ be defined by the equation $\mtx{U}\mvec{z} = \mvec{x}^*_{\top}$. We also define $\mu := \min_{\mvec{x}^* \in \mathcal{G}} \| \mvec{x}^* \|$ and note that $\mu= \| \mvec{x}_{\top}^* \|$. 

We test \eqref{ineq: REGO_successful_bound} for $f$ by contrasting the left-hand side of \eqref{ineq: REGO_successful_bound} (denoted by $L^*$) to its right-hand side (denoted by $R^*$). We compare $L^*$ and $R^*$ for four different values of $d - d_e$, namely, 0, 1, 2 and 3. For each value of $d-d_e$, we express $R^*$ as a function of $\bar{\delta} := \delta/\mu$ and using its closed form we plot $R^*$ for $\bar{\delta} \in [0.02, 10]$. We do not have a closed form expression for $L^*$, but we can approximate it numerically. In what follows, we describe how this could be done. We start by writing
\begin{equation} \label{eq: L^* = prob[exists y^*]}
\begin{aligned}
L^* := \prob[\eqref{eq: REGO} \text{ is successful}] = \prob[ \text{$\exists\mvec{y}\in [-\delta, \delta]^d : \mtx{A}\mvec{y} \in \mathcal{G}$} ] & = \prob[ \text{$\exists\mvec{y}\in [-\delta, \delta]^d : \bar{\mtx{B}}\mvec{y} = \mvec{z}$} ] \\
& = \prob[ \text{$\exists\mvec{y}\in [-\bar{\delta}, \bar{\delta}]^d : \bar{\mtx{B}}\mvec{y} = \bar{\mvec{z}}$} ],
\end{aligned}
\end{equation}
where $\bar{\mtx{B}}$ denotes a $d_e \times d$ Gaussian matrix and $\bar{\mvec{z}} = \mvec{z}/\mu$. Here, the second equality follows from \Cref{def: successful_REGO}, and the third equality follows from \Cref{thm: By=z} and the fact that $\mathcal{G}$ has only one connected component. Note that $\| \bar{\mvec{z}} \| = 1$ since $\|\mvec{z}\| = \| \mvec{x}_{\top}^* \| = \mu$ (see \Cref{rem: defined_B_and_z}). We assign $\bar{\mvec{z}}$ to a random vector with unit norm and keep $\bar{\mvec{z}}$ fixed throughout the experiment\footnote{Note that the results of the experiment are invariant of the choice of $\bar{\mvec{z}}$ as long as its norm is fixed. Let $\mvec{z}_1$ and $\mvec{z}_2$ be two fixed vectors with unit norm. Consider two systems: $\mtx{B}\mvec{y} = \mvec{z}_1$ and $\mtx{B}\mvec{y} = \mvec{z}_2$. Note that $\mvec{z}_2$ can be written as $\mtx{Q}\mvec{z}_1$ for some orthogonal $\mtx{Q} \in \mathbb{R}^{d_e \times d_e}$. Then, the second system becomes $\mtx{Q}^T \mtx{B} \mvec{y} = \mvec{z}_1$ and this generates vectors $\mvec{y}$ with the same distribution as the first system since $\mtx{Q}^T\mtx{B}$ is also Gaussian.}. For each $\bar{\delta} \in [0.02, 10]$, we generate a thousand Gaussian matrices $\bar{\mtx{B}}$ and estimate the latter probability in \eqref{eq: L^* = prob[exists y^*]} as the proportion of instances for which the statement under the probability is true. Unlike for $R^*$, $L^*$ depends on individual values of $d$ and $d_e$. We plot the estimates for $L^*$ for the following values of $d-d_e$: 0, 1, 2, 3 and, in each plot, we repeat the experiment for $d_e = 2,3,4,5,6$. The plots are presented in \Cref{fig: par_est}. 
\paragraph{Numerical findings.}
The plots in \Cref{fig: par_est} --- confirming the conclusions of \Cref{cor: REGO_successful_lower_bound} --- suggest that the variation in success of \eqref{eq: REGO} is mainly determined by the value of $d-d_e$; the larger is the difference, the higher is the probability of success of \eqref{eq: REGO} for a given $\bar{\delta}$. These curves, being independent of $\mu$, can be used to find suitable $\bar{\delta}$ for any problem for the corresponding values of $d-d_e$; the size of the $\mathcal{Y}$ box, $\delta$, can then be set to $M \bar{\delta}$ if an upper bound $M$ on $\mu$ is known.

\paragraph{Choosing $d$ and $\delta$ in practice.}
When it comes to the numerical application of \eqref{eq: REGO} in practice, initialization of parameters $d$ and $\delta$ might be problematic. From the theoretical discussions above we learned that the parameters $d$ and $\delta$ must be defined based on  $d_e$ and $\mu$, the values of which are typically unknown in practice, for example, for black-box functions. We circumvent this issue by estimating $d_e$ and $\mu$ rather than trying to calculate their exact values; note that all we need is an upper bound $d$ on $d_e$. The parameter $d_e$ or an upper bound may be known from prior studies or can be found with active subspace identification methods (see, e.g., \cite{Constantine2015}); these use gradients of $f$ to estimate $d_e$. 

Estimating $\mu$ can be a harder task. A rough estimate for $\mu$ can be obtained if the search in the original space is restricted to a certain domain; a trivial upper bound in this case is given by the maximum distance between the origin and the boundary of the domain. The search domain that is commonly imposed to practically solve unconstrained optimization problems is box constraints, such as $\mathcal{X} = [-1,1]^D$ for which $\mu \leq \sqrt{D}$. In \Cref{app: more_experiments}, we test REGO assuming that $\sqrt{D}$ is the best bound known for $\mu$. To compensate for unknown $\mu$, one could also try increasing $\delta$ or $d$ gradually to explore larger regions in $\mathbb{R}^D$. \label{par: choosing_d_and_delta_in_practice}

\subsection{Testing REGO with state-of-the-art global solvers}
\paragraph{Algorithms.}
\begin{algorithm}[!t]
	\centering
	\caption{Random Embeddings for Global Optimization (REGO) applied to \eqref{eq: GO}.}
	\label{alg: REGO}
	\begin{algorithmic}[1]
		\item Initialise $d$ and $\delta$ and define $\mathcal{Y} = [-\delta, \delta]^d$
		\item Generate a $D\times d$ Gaussian matrix $\mtx{A}$
		\item Apply a global optimization solver (e.g. BARON, DIRECT, KNITRO) to \eqref{eq: REGO} until a termination criterion is satisfied, and define $\mvec{y}_{min}$ to be the generated (approximate) solution of \eqref{eq: REGO}.
		\item Reconstruct $\mvec{x}_{min} = \mtx{A} \mvec{y}_{min}$
	\end{algorithmic}
\end{algorithm}	
The algorithm for the random embeddings method named REGO (Random Embeddings for Global Optimization) is outlined in \Cref{alg: REGO}. Below, we give the descriptions of the three state-of-the-art solvers we use to test REGO.  

DIRECT(\cite{Gablonsky2001, Jones1993, Finkel2003}) version 4.0 (DIviding RECTangles) is a deterministic\footnote{Here, we refer to the predictable behaviour of the solver given a fixed set of parameters.} global optimization solver first introduced in \cite{Jones1993} as an extension of Lipschitzian optimization. DIRECT does not require information about the gradient nor about the Lipschitz constant and, hence, can be used for black-box functions. DIRECT divides the search domain into rectangles and evaluates the function at the centre of each rectangle. Based on the previously sampled points, DIRECT carefully decides what rectangle to divide next balancing between local and global searches. 
Jones et al. \cite{Jones1993} showed that DIRECT is guaranteed to converge to global minimum, but convergence may sometimes be slow.

BARON(\cite{Sahinidis2014, Sahinidis2005}) version 17.10.10 (Branch-And-Reduce Optimization Navigator) is a branch- and-bound type global optimization solver for non-linear and mixed-integer non-linear programs. To provide lower and upper bounds for each branch, BARON utilizes algebraic structure of the objective function. It also includes a preprocessing step where it performs a multi-start local search to obtain a tight global upper bound. In comparison to other existing global solvers, BARON was demonstrated to be the most robust and fastest (see \cite{Neumaier2005}). However, BARON accepts only a few (general) classes of functions\footnote{For instance, BARON cannot be applied to problems which include trigonometric functions.} including polynomial, exponential, logarithmic, etc. and, unlike DIRECT, it is unable to optimize black-box functions. 

KNITRO(\cite{Byrd2006}) version 10.3.0 is a large-scale non-linear local optimization solver capable of handling problems with hundreds of thousands of variables. KNITRO allows to solve problems using one of the four algorithms: two interior point type methods (direct and conjugate gradient) and two active set type methods (active set and sequential quadratic programming). In contrast to BARON and DIRECT, which specialize on finding global minima, KNITRO focuses on finding local solutions. Nonetheless, KNITRO has multi-start capabilities, i.e., it solves a problem locally multiple times every time starting from a different point in the feasible domain. It is this feature that we make use of in the experiments.

\paragraph{Generating the test set.}
Our test set of functions with low effective dimensionality will be derived from 19 global optimization problems (of dimensions 2--6) with known global minima \cite{AMPGO, Ernesto2005, Bingham2013}, some of which are from the Dixon-Szego set \cite{Dixon-Szego1975}. The list of the problems is given in \Cref{table: Test set}, \Cref{app: Test set}. 

Below we describe the method adopted from Wang et al. \cite{Wang2016} to generate high-dimensional functions with low effective dimensionality. Let $\bar{g}(\bar{\mvec{x}})$ be any function from \Cref{table: Test set}; let $d_e$ be its dimension and let the given domain be scaled to $[-1, 1]^{d_e}$. We create a $D$-dimensional function $g(\mvec{x})$ by adding $D-d_e$ fake dimensions to $\bar{g}(\bar{\mvec{x}})$, $ g(\mvec{x}) = \bar{g}(\bar{\mvec{x}}) + 0\cdot x_{d_e+1} + 0 \cdot x_{d_e+2} + \cdots + 0\cdot x_{D}$. We further rotate the function by applying a random orthogonal matrix $\mtx{Q}$ to $\mvec{x}$ to obtain a non-trivial constant subspace. The final form of the function we test is given as 
\begin{equation}\label{eq: f=g(Qx)}
	f(\mvec{x}) = g(\mtx{Q}\mvec{x}).
\end{equation}
Note that the first $d_e$ rows of $\mtx{Q}$ now span the effective subspace $\mathcal{T}$ of $f(\mvec{x})$. Furthermore, 
\begin{equation}\label{ineq: upper_bound_on_mu}
	\mu:= \min_{\mvec{x} \in \mathcal{G}_1} f(\mvec{x}) = \min_{\bar{\mvec{x}} \in \mathcal{G}_2} \bar{g}(\bar{\mvec{x}}) \leq \sqrt{d_e},
\end{equation}
where $\mathcal{G}_1$ and $\mathcal{G}_2$ are the sets of global minimizers of $f$ and $\bar{g}$, respectively.  

For each problem in the test set, we generate three functions $f$ as defined in \eqref{eq: f=g(Qx)} one for each $D = 10$, $100$, $1000$. We will tackle \eqref{eq: GO} for each $f$ both directly (we call it `\textit{no embedding}') and applying REGO outlined in \Cref{alg: REGO}. 

\paragraph{Experimental setup (REGO).}
We compare `no embedding' and REGO using the three solvers above. Let $g_i$, $s_j$, $n_j$ and $D_k$ denote the $i$th function in the problem set ($g_1$ = Beale, etc., see \Cref{table: Test set}), $j$th solver ($s_1$ = DIRECT, $s_2$ = BARON, $s_3$ = KNITRO), the total number of problems in the problem set solvable by $j$th solver ($n_1$ = 19, $n_2 = 15$, $n_3 = 18$) and $k$th ambient dimension ($D_1 = 10$, $D_2 = 100$, $D_3 = 1000$), respectively. Let $f_{ik}$ denote the $D_k$-dimensional function with low effective dimensionality constructed from $g_i$ as described previously. 

Within `no embedding' framework, for each pair ($s_j$,$D_k$), we solve $f_{ik}$ for  $i = 1, 2,\dots, n_j$ with solver $s_j$ and record the proportion of the problems that attain convergence (see definition in \Cref{table: solvers_descriptions}). 

For each $f_{ik}$ ($1\leq i \leq n_j$, $1 \leq k \leq 3$), we apply REGO 100 times every time with a different Gaussian matrix. Thus, in total, for each pair ($s_j$,$D_k$) we solve $n_j \times 100$ problems. We record the proportion of problems that attain convergence (see \Cref{table: solvers_descriptions}) out of these $n_j \times 100$ problems.

We also record the number of function evaluations (for DIRECT and KNITRO) and CPU time (for all the three solvers) spent before termination within the two frameworks. For each ($s_j$,$D_k$), function evaluations and time are averaged out over $n_j \times 100$ problems within REGO and over $n_j$ problems within `no embedding'.

We conduct the above experiment for REGO with the following pairs of parameters ($d$,$\delta$): $( d_e, 8.0\times\sqrt{d_e} ), (d_e+1, 2.2\times\sqrt{d_e}), (d_e+2, 1.3 \times \sqrt{d_e})$ and $(d_e+3, 1.0 \times \sqrt{d_e})$. Here, each $\delta$ was set to $M\bar{\delta}$, where $M = \sqrt{d_e}$ is an upper bound on $\mu$ (see \eqref{ineq: upper_bound_on_mu}) and the value for $\bar{\delta}$ was chosen as the smallest $\bar{\delta}$ that gives at least $90\%$ chance of success based on the curve of $R^*$ in \Cref{fig: par_est}.

\paragraph{Experimental setup (solvers).}
Due to the difference in algorithmic procedures of the solvers, they allow different budget constraints and have different convergence and termination criteria; we present these in \Cref{table: solvers_descriptions}. 
\begin{table}[!t]
	\centering
	\caption{ The table outlines the experimental setup for the three solvers. In the table, $f$ is a function with low effective dimensionality $d_e$ and the global minimum $f^*$, and $\epsilon$ is set to $10^{-3}$.}
	\label{table: solvers_descriptions}
	\begin{tabular}{p{2.42cm}|p{3.9cm}|p{3.9cm}|p{3.9cm}}
		& \multicolumn{1}{c | }{DIRECT} & \multicolumn{1}{c |}{BARON} & \multicolumn{1}{c}{KNITRO}\\ \hline
		\mbox{Measure of} computational cost& function evaluations & CPU seconds & function evaluations, CPU seconds \\ \hline
		 \mbox{Budget per} problem & $10000\times d_e$ function evaluations & $200\times d_e$ CPU seconds & $20 \times d_e$ starting points \\ \hline
		 Convergence criteria (see \Cref{rem: convergence criteria})& $f^*_D \leq f^* + \epsilon$ &
		 Convergence: \mbox{$f_B^U \leq f^* + \epsilon$} Convergence$_{opt}$: $f_B^U \leq f_B^L + \epsilon$
		  & $f_K^* \leq f^* + \epsilon$
		  \\ \hline
		 Termination criteria & Either on budget or if $\mvec{x}_D^*$ satisfies the convergence criteria &  Either on budget or if $f_B^U$ and $f_B^L$ satisfy the convergence$_{opt}$ criteria & On budget \\ \hline
		 Additional \hspace{0.2cm} options & \verb|options.testflag|=1 \verb|options.maxits|=\verb|Inf| \verb|options.globalmin|=$f^*$ & \mbox{\texttt{npsol} = 9} \hspace{0.3cm}
		\mbox{\texttt{numloc} = 0} \mbox{\texttt{BrVarStra} = 1} \mbox{\texttt{BrPtStra} = 1}  & Default options. Derivatives are allowed. Use of multi-start through \verb|ms_enable|=1.  \\ \hline
	\end{tabular}
\end{table}

\begin{rem}\label{rem: convergence criteria}
	DIRECT, at its every iteration, stores $f_D^*$ --- the minimum value of $f$ so far found. BARON, at its every iteration, stores $f_B^U$ and $f_B^L$ --- smallest upper bound and largest lower bound so far found for $f$. As for KNITRO, $f_K^*=\min \{ f(\mtx{A}\mvec{y}_1^*), f(\mtx{A}\mvec{y}_2^*), \dots, f(\mtx{A}\mvec{y}_l^*)\}$, where $l$ is the number of starting points and where $\{\mvec{y}_i^*\}_{1 \leq i \leq l}$ are the local solutions produced by the multi-start procedure. 
\end{rem}
\begin{rem}
	The experiments are done not to compare solvers but to contrast `no embedding' with REGO. All the experiments were run in MATLAB on the 16 cores (2$\times$8 Intel with hyper-threading) Linux machines with 256GB RAM and 3300 MHz speed. 
\end{rem}

\subsection{Numerical results}
\paragraph{\eqref{eq: REGO} successful.}
We record the proportion of instances for which \eqref{eq: REGO} is successful. \Cref{table: average_minimum_in_box} presents these percentages for each particular choice of $d$ and $D$ averaged over 19 problems in the test set. 
We observe that the percentages are very high and appear to be independent of $D$ supporting the conclusions of \Cref{cor: REGO_successful_lower_bound}.
\begin{table}[!ht]
	\centering
	\caption{The table shows average percentages of problems for which \eqref{eq: REGO} is successful.}
	\label{table: average_minimum_in_box}
	\begin{tabular}{c|c|c|c}
		$d$/$D$ & 10 & 100 & 1000	\\ \hline
		$d_e + 0$ & 97.2 & 97.8 & 97.3 \\
		$d_e+1$ & 99.1 & 98.9 & 99.3\\
		$d_e+2$ & 99.5 & 99.6 & 99.8 \\
		$d_e+3$ & 100 & 99.9 & 99.8
	\end{tabular}
\end{table}
\paragraph{REGO vs. no embedding.}
The results of the experiment comparing REGO and `no embedding' are presented in \Cref{fig: REGO_DIRECT}, \Cref{fig: REGO_BARON} and \Cref{fig: REGO_KNITRO} for DIRECT, BARON and KNITRO, respectively. These figures compare average proportions of converged solutions and computational costs produced by REGO and `no embedding' frameworks for $D=10,100,1000$. 

DIRECT (\Cref{fig: REGO_DIRECT}). For all the four initialisations of REGO, we observe that the average proportions of problems that attained convergence (see definition in \Cref{table: solvers_descriptions}) are invariant with respect to the ambient dimension. This frequency of convergence is higher within `no embedding' for $D = 10, 100$, but exhibits a significant drop for $D = 1000$. The average function evaluation count is maintained within REGO, but doubles within `no embedding' for a tenfold increase in $D$. Growth in CPU time takes place within both frameworks, being highest for `no embedding'. 

BARON  (\Cref{fig: REGO_BARON}). In comparison with `no embedding', the frequency of convergence$_{opt}$ is higher within REGO in most cases. We note that BARON's both convergence and convergence$_{opt}$ exhibit invariance with respect to $D$ within REGO.  As for `no embedding', we observe a decrease in the frequencies of both convergence and convergence$_{opt}$. In addition, we observe an increase in CPU time spent within `no embedding', whilst the time is almost constant within REGO.

KNITRO (\Cref{fig: REGO_KNITRO}). We see that the proportion of solved problems is invariant with respect to the ambient dimension within REGO and, surprisingly, within `no embedding' as well. However, the average number of function evaluations and time spent differ significantly between the two frameworks. With REGO, the average number of function evaluations remain at the same level for all $D$. Average time grows within both frameworks, but at a higher rate for `no embedding'. The average time differs by a factor of $70$ for $D = 1000$ in favour of REGO. We think that the growth in time within REGO is due to more costly function and derivative evaluations for larger $D$. 

\begin{figure}[!t]
	\centering
	\includegraphics[scale = 0.6]{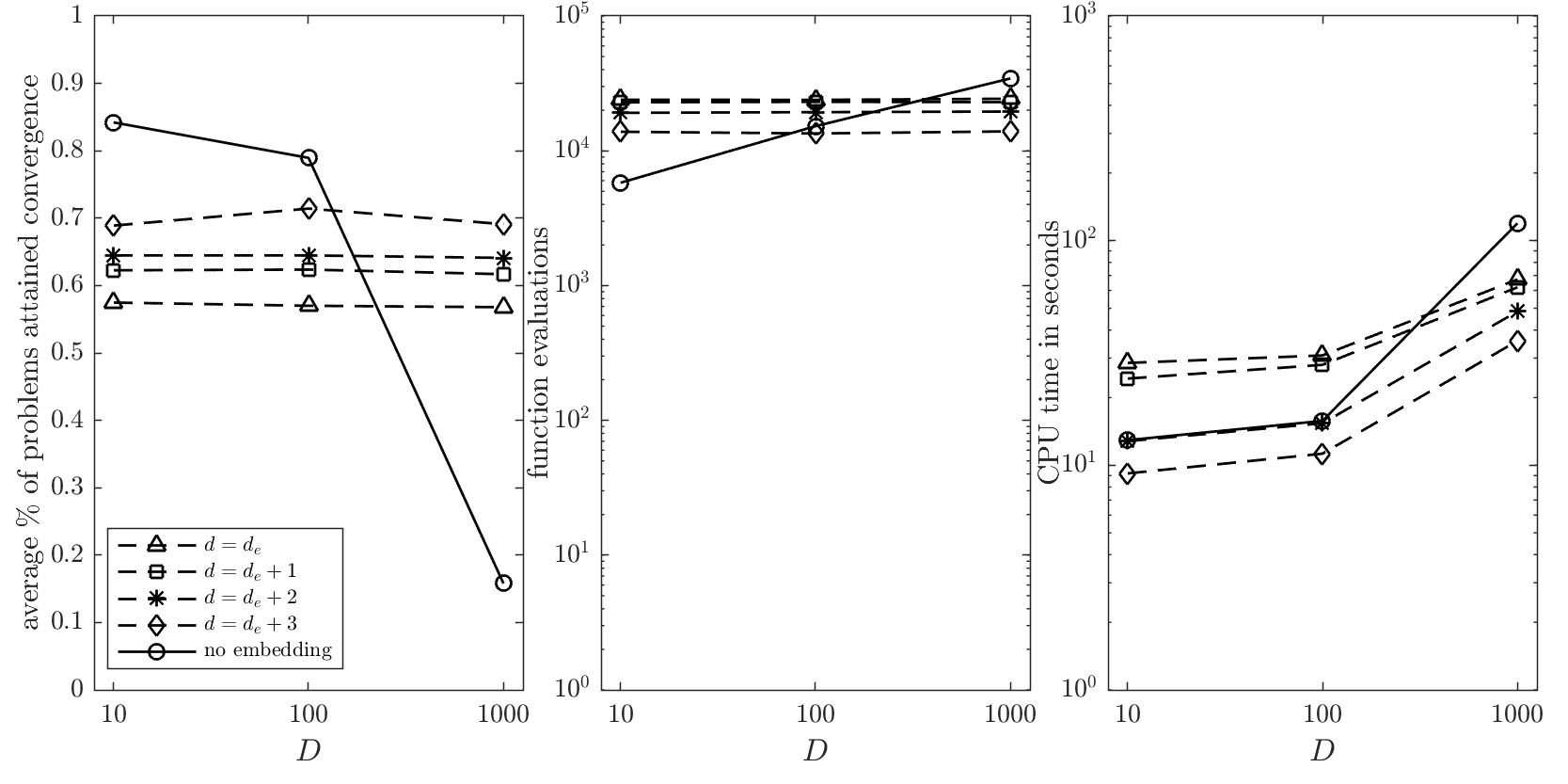}
	\caption{REGO versus `no embedding' with DIRECT: comparison of frequency of convergence, log of average function evaluations and log of average CPU time (in seconds).}
	\label{fig: REGO_DIRECT}
\end{figure}
\begin{figure}[!ht]
	\centering
	\includegraphics[scale = 0.6]{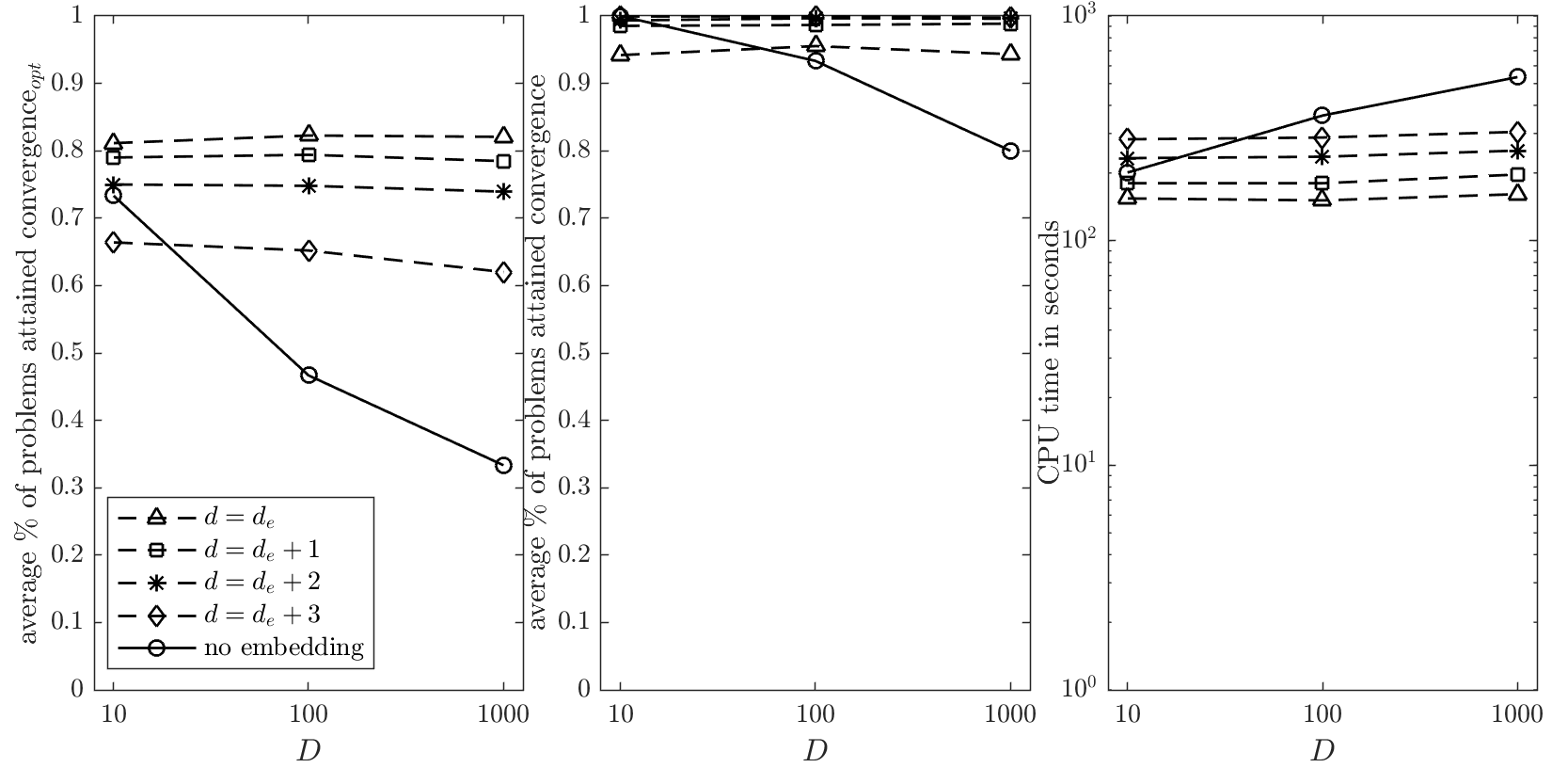}
	\caption{REGO versus `no embedding' with BARON: comparison of frequency of convergence$_{opt}$/convergence and average CPU time (in seconds). }
	\label{fig: REGO_BARON}
\end{figure}
\begin{figure}[!ht]
	\centering
	\includegraphics[scale = 0.6]{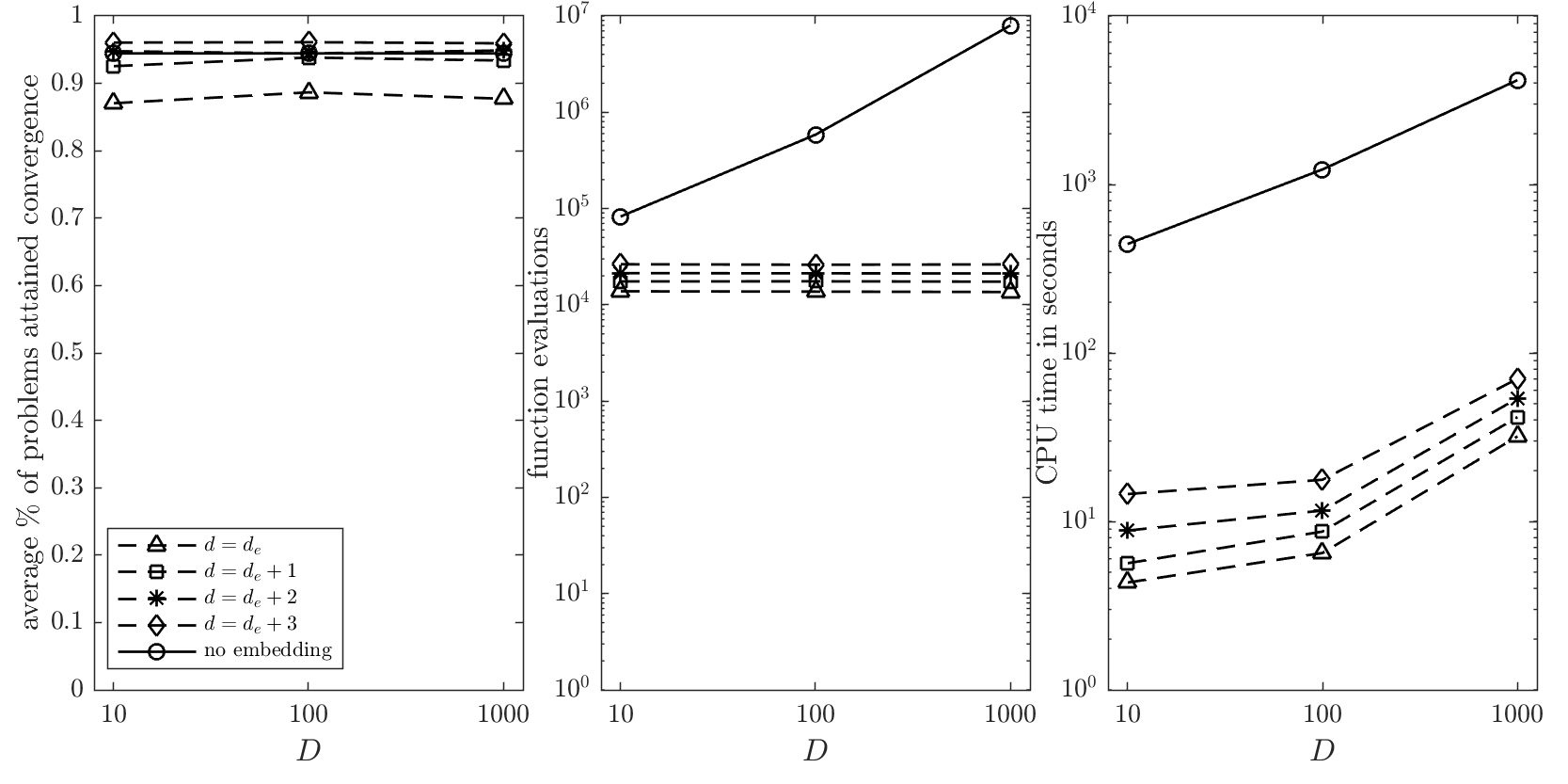}
	\caption{REGO versus `no embedding' with KNITRO: comparison of frequency of convergence, log of average function evaluations and log of average CPU time (in seconds).}
	\label{fig: REGO_KNITRO}
\end{figure}

\subsection*{Summary of numerical results}
\begin{enumerate}
	\item (Effects of parameter choices) Our experiments clearly show that the choice of $d$ and $\delta$ has a considerable effect on convergence and computational cost of REGO, and that good choices of $(d,\delta)$ are dependent on the underlying solver. For example, BARON achieved highest proportion of convergence$_{opt}$ within least amount of time for $(d,\delta) = (d_e, 8\sqrt{d_e})$, whereas DIRECT performed best for $(d_e +3, \sqrt{d_e})$. KNITRO produced highest proportion of convergence and worst time for $(d_e +3, \sqrt{d_e})$, and lowest proportion of convergence and best time for $(d_e, 8\sqrt{d_e})$.
	
	\item (Scalability) Within REGO, the proportion of problems solved and/or number of function evaluations are generally invariant with respect to the ambient dimension $D$. REGO displays good scalability for all three solvers. 
	
	\item (No embedding) Within `no embedding', as $D$ increases, the proportion of problems that attained convergence$_{opt}$/convergence decreased for BARON and DIRECT. Surprisingly, for the KNITRO's multi-start method, the proportion of solved problems is maintained, but the number of function evaluations and time increased dramatically.  
\end{enumerate}

\paragraph{Additional experiments.} To see how robust REGO is to the changes in the parameters, we conduct three more experiments presented and discussed in \Cref{app: more_experiments}. In the first experiment, assuming that $\mu$ is bounded by $\sqrt{D}$ (see page \pageref{par: choosing_d_and_delta_in_practice} for an explanation for this choice), we set $\delta$ to $\sqrt{D} \bar{\delta}$ for $\bar{\delta}$ chosen as in the main experiment. The second experiment tests REGO for four different values of $d$ while keeping $\delta$ fixed and the third experiment tests REGO for three different values of $\delta$ keeping $d$ fixed. In all three experiments, REGO performs well, particularly for BARON and KNITRO, solving most of the problems and exhibiting similar trends as in the main experiment. 


\section{Conclusions and future work} \label{sec: conclusions}

We study a general algorithmic framework for functions with low effective dimensionality that solves the reduced problem \eqref{eq: REGO} using a single Gaussian random embedding and a(ny) general global optimization solver.
Our precise theoretical findings backed by the numerical experiments show that the success of \eqref{eq: REGO} is essentially independent of $D$ and mainly depends on the gap between the embedding dimension $d$ and the dimension of effective subspace $d_e$, and the ratio between the size of $\mathcal{Y}$ (namely $\delta$) and $\mu$ (the Euclidean distance to the closest affine subspace of minimizers). REGO with three standard global solvers produced high frequencies of convergence, generally outperforming  the respective solver's performance when applied directly to the problems (without the dimensionality reduction)  in terms of proportion of problems solved and/or computational cost. 

Our in-depth investigations are conceptual in nature, and there is clearly more work that needs to be done to make this framework practically applicable to global optimization problems with special structure. In particular, as outlined on
page \pageref{par: choosing_d_and_delta_in_practice}, our REGO approach depends on knowing (an upper bound $d$ on)
the effective dimension $d_e$. Future work may include estimating $d_e$ prior to optimizing, noting that REGO does not need to learn the entire effective subspace only its dimension. One could also estimate $d$ or $d_e$ numerically, as proposed in \cite{Sanyang2016}, where $d$ is gradually increased until no significant changes in the best function value found are observed. Our theoretical choices for $\delta$ also depend on $\mu$, which again needs estimating. In this case, choosing a box domain for $f$ would provide a rough estimate for $\mu$ (as discussed on page  \pageref{par: choosing_d_and_delta_in_practice}), with the remark that REGO cannot (yet) guarantee feasibility with respect to given bounds. To achieve the latter, one needs to either add projection operators as in \cite{Wang2016, Binois2014, Binois2017}, or include the problem constraints in the formulation of \eqref{eq: REGO} and allow multiple random embeddings as in \cite{QianHuYu2016}. Alternative potential directions include investigating other random or deterministic matrix choices for the embeddings, as  considered for example in \cite{Nayebi2019}. Real-life problems are often only approximately low dimensional, and so their optimization requires further extensions and analysis of the random embedding framework. 


\bibliographystyle{plainnat}
{\footnotesize
	\bibliography{bibliography}}

\appendix
\section{Technical definitions and results}
\subsection{Gaussian random matrices}

\begin{definition}[Gaussian matrix] \label{def: Gaussian_matrix}
	A Gaussian (random) matrix is a matrix whose each entry is an independent standard normal random variable. 
\end{definition}
Gaussian matrices have been well-studied with many results available at hand. Here, we mention a few key properties of Gaussian matrices that we use in the analysis; for a collection of results pertaining to Gaussian matrices and other related distributions refer to \cite{Gupta1999, Vershynin2018}. 

Gaussian random matrices are known to be invariant with respect to orthogonal transformations:
\begin{theorem} \label{thm: orthog_inv_of_Gaussian_matrices} (see \cite[Theorem 2.3.10]{Gupta1999})
	Let $\mtx{A}$ be an $D \times d$ Gaussian random matrix. If $\mtx{U} \in \mathbb{R}^{D \times p}$, $D \geq p $, and $\mtx{V} \in \mathbb{R}^{d \times q}$, $d \geq q$, are orthogonal, then $\mtx{U}^T\mtx{A}\mtx{V}$ is a Gaussian random matrix. 
\end{theorem}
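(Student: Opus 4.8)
The plan is to reduce the two-sided transformation $\mtx{U}^T\mtx{A}\mtx{V}$ to two applications of a single one-sided result, exploiting that a fixed linear map applied to independent Gaussian vectors yields Gaussian vectors whose covariance is determined by the map. First I would fix the reading of ``orthogonal'' for the rectangular matrices: since $D \geq p$ and $d \geq q$, the hypothesis means $\mtx{U}$ and $\mtx{V}$ have orthonormal \emph{columns}, i.e. $\mtx{U}^T\mtx{U} = \mtx{I}_p$ and $\mtx{V}^T\mtx{V} = \mtx{I}_q$.

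The first step handles left multiplication: I would show that $\mtx{U}^T\mtx{A}$ is a $p \times d$ Gaussian matrix. Writing $\mtx{A} = (\mvec{a}_1 \; \cdots \; \mvec{a}_d)$ column-wise, the columns $\mvec{a}_j$ are independent $N(\mvec{0}_D, \mtx{I}_D)$ vectors by \Cref{def: Gaussian_matrix}. The $j$-th column of $\mtx{U}^T\mtx{A}$ is $\mtx{U}^T\mvec{a}_j$, which is Gaussian with mean $\mvec{0}_p$ and covariance $\mtx{U}^T\mtx{I}_D\mtx{U} = \mtx{U}^T\mtx{U} = \mtx{I}_p$; moreover the columns remain mutually independent, as each is the same deterministic linear image of an independent $\mvec{a}_j$. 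Hence all $pd$ entries of $\mtx{U}^T\mtx{A}$ are independent standard normal, so $\mtx{U}^T\mtx{A}$ is Gaussian. The second step handles right multiplication by transposition: setting $\mtx{B} = \mtx{U}^T\mtx{A}$, we have $(\mtx{B}\mtx{V})^T = \mtx{V}^T\mtx{B}^T$. Transposing merely relabels i.i.d. standard-normal entries, so $\mtx{B}^T$ is a $d \times p$ Gaussian matrix; applying the first-step argument with $\mtx{V}$ (orthonormal columns) in place of $\mtx{U}$ shows $\mtx{V}^T\mtx{B}^T$ is Gaussian, and therefore so is its transpose $\mtx{U}^T\mtx{A}\mtx{V}$.

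As an alternative I would note the whole claim follows in one shot by vectorization: using $\vc(\mtx{U}^T\mtx{A}\mtx{V}) = (\mtx{V}^T \otimes \mtx{U}^T)\vc(\mtx{A})$ with $\vc(\mtx{A}) \sim N(\mvec{0}, \mtx{I}_{Dd})$, the image is Gaussian with covariance $(\mtx{V}^T\otimes\mtx{U}^T)(\mtx{V}\otimes\mtx{U}) = (\mtx{V}^T\mtx{V})\otimes(\mtx{U}^T\mtx{U}) = \mtx{I}_{pq}$, so its entries are again i.i.d. standard normal. There is no genuine analytic difficulty here; the only points demanding care are (i) that Gaussianity of a \emph{matrix} means jointly independent standard-normal entries, so both marginal normality \emph{and} the identity covariance must be verified, not merely that each entry is normal, and (ii) that the two-sided product mixes rows and columns simultaneously, which is why independence must be re-established after each one-sided step (or read off directly from the Kronecker covariance) rather than simply inherited from $\mtx{A}$.
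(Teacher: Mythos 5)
Your proof is correct. Note, however, that the paper does not prove this statement at all: it is imported verbatim from Gupta and Nagar (Theorem 2.3.10 of \emph{Matrix Variate Distributions}), so there is no internal argument to compare against. Your two-step derivation (left multiplication column-by-column, then right multiplication by transposition) is a sound, self-contained replacement for that citation: the columns $\mtx{U}^T\mvec{a}_j$ are independent $N(\mvec{0}_p,\mtx{U}^T\mtx{U})=N(\mvec{0}_p,\mtx{I}_p)$ vectors, so $\mtx{U}^T\mtx{A}$ is Gaussian, and the transposition trick correctly reduces the right factor to the same lemma. The standard reference instead proves a more general statement about matrix-variate normal distributions $N_{p,n}(\mtx{M},\boldsymbol{\Sigma}\otimes\boldsymbol{\Psi})$ via characteristic functions, of which this theorem is the special case $\mtx{M}=\mtx{0}$, $\boldsymbol{\Sigma}\otimes\boldsymbol{\Psi}=\mtx{I}$; your Kronecker-product vectorization remark is essentially that computation specialized to the identity covariance, and either of your two routes suffices for the way the paper uses the result (in \Cref{rem: defined_B_and_z} and \Cref{lemma: y_has_spherical_distr}). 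You are also right to flag that joint independence of all entries, not merely marginal normality, is the content that must be checked; that is exactly what the identity covariance delivers.
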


A related notion that plays an important role in the study of Gaussian matrices is the Wishart distribution represented by matrix $\mtx{A}^T\mtx{A}$ (or $\mtx{A}\mtx{A}^T$), where $\mtx{A}$ is an overdetermined (underdetermined) Gaussian matrix. A Wishart matrix is positive definite with probability 1:
\begin{theorem} \label{thm: Wishart_is_nonsingular} (see \cite[Theorem 3.2.1]{Gupta1999})
	Let $\mtx{A}$ be an $D \times d$ Gaussian random matrix, $D \geq d$. Then, the Wishart matrix $\mtx{A}^T\mtx{A}$ is positive definite with probability 1. 
\end{theorem}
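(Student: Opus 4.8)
The plan is to reduce positive definiteness of $\mtx{A}^T\mtx{A}$ to a full-rank (equivalently, nonvanishing-determinant) statement, and then to argue that the exceptional event has probability zero because the Gaussian law is absolutely continuous with respect to Lebesgue measure. For every realisation of $\mtx{A}$, the matrix $\mtx{A}^T\mtx{A}$ is symmetric positive semidefinite, since $\mvec{v}^T\mtx{A}^T\mtx{A}\mvec{v} = \|\mtx{A}\mvec{v}\|_2^2 \geq 0$; it is therefore positive definite precisely when $\mtx{A}\mvec{v} \neq \mvec{0}$ for all $\mvec{v} \neq \mvec{0}$, that is, when $\mtx{A}$ has full column rank $d$. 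Because $D \geq d$, this is in turn equivalent to $\det(\mtx{A}^T\mtx{A}) \neq 0$. Hence it suffices to prove $\prob[\det(\mtx{A}^T\mtx{A}) = 0] = 0$.

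First I would observe that $p(\mtx{A}) := \det(\mtx{A}^T\mtx{A})$, viewed as a function of the $Dd$ entries of $\mtx{A}$, is a polynomial: the determinant is polynomial in the entries of $\mtx{A}^T\mtx{A}$, and these are themselves bilinear in the entries of $\mtx{A}$. This polynomial is not identically zero; taking $\mtx{A}$ to be the matrix whose top $d \times d$ block is $\mtx{I}_d$ and whose remaining rows vanish (which is possible precisely because $D \geq d$) gives $\mtx{A}^T\mtx{A} = \mtx{I}_d$ and hence $p(\mtx{A}) = 1 \neq 0$.

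Next I would invoke the standard fact that the zero set of a polynomial which is not identically zero has Lebesgue measure zero in $\mathbb{R}^{Dd}$. Since the entries of $\mtx{A}$ are independent standard normal random variables, their joint law possesses a density (the product of the one-dimensional Gaussian densities) and is therefore absolutely continuous with respect to Lebesgue measure; consequently any Lebesgue-null set is also assigned probability zero. Applying this to the zero set of $p$ yields $\prob[\det(\mtx{A}^T\mtx{A}) = 0] = 0$, and therefore $\mtx{A}^T\mtx{A}$ is positive definite with probability one.

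The only step requiring genuine justification is the claim that the zero set of a nonzero polynomial is Lebesgue-null, which is the main obstacle in the sense that everything else is bookkeeping. It can be proved by induction on the number of variables: a nonzero univariate polynomial has finitely many roots, and in the multivariate case one writes $p$ as a polynomial in the last variable with coefficients in the remaining ones, applies Fubini's theorem, and uses the inductive hypothesis on the lower-dimensional set where all those coefficients vanish simultaneously. An alternative route that sidesteps this measure-theoretic lemma is to argue column by column: conditional on the first $k < d$ columns of $\mtx{A}$ being linearly independent, they span a fixed $k$-dimensional subspace of $\mathbb{R}^D$, which has Lebesgue measure zero because $k < D$, so the independent Gaussian $(k+1)$-th column avoids it almost surely; iterating over $k = 0, 1, \dots, d-1$ establishes full column rank with probability one.
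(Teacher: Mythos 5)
Your proof is correct. Note, however, that the paper does not actually prove this statement: it is quoted verbatim from the literature (Theorem 3.2.1 of Gupta and Nagar's monograph on matrix variate distributions), so there is no internal argument to compare against. What you have supplied is the standard self-contained justification: positive semidefiniteness is automatic from $\mvec{v}^T\mtx{A}^T\mtx{A}\mvec{v} = \|\mtx{A}\mvec{v}\|_2^2$, positive definiteness reduces to full column rank, and the failure event is the zero set of the nonzero polynomial $\det(\mtx{A}^T\mtx{A})$ in the $Dd$ entries, which is Lebesgue-null and hence null under the absolutely continuous Gaussian law. All steps check out, including the witness $\mtx{A} = \binom{\mtx{I}_d}{\mtx{0}}$ showing the polynomial is not identically zero (this is exactly where $D \geq d$ enters), and the induction-plus-Fubini sketch for the null-set lemma is the standard one. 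Your alternative column-by-column argument is also valid and arguably more probabilistic in spirit: conditioning on the first $k$ columns, their span is a fixed subspace of dimension $k \leq d-1 < D$, which the independent Gaussian $(k+1)$-st column avoids almost surely. Either route would serve as a legitimate replacement for the external citation; the polynomial argument generalises immediately to any absolutely continuous entry distribution, while the conditioning argument avoids the measure-theoretic lemma about polynomial zero sets.
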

The immediate consequence of the result is that the Wishart matrix is nonsingular with probability 1.

\subsection{Chi-squared random variable} \label{def: chi-squared_rv}
\begin{definition}[Chi-squared random variable] 
	Given a collection $Z_1, Z_2, \dots, Z_n$ of $n$ independent standard normal variables, a random variable $X = Z_1^2+Z_2^2+\cdots Z_n^2$ is said to follow the chi-squared distribution with $n$ degrees of freedom. We denote this by $X \sim \chi^2_n$. 
\end{definition}
The following lemma provides a notable relationship between the inverse of the Wishart matrix and the chi-squared random variable.
\begin{lemma}\label{lemma: Rayleigh_quotient_chi_square} (see \cite[Corollary 3.3.13.1.]{Gupta1999})
	Let $\mtx{A}$ be an $D \times d$ Gaussian matrix, $D \geq d$, and let $\mvec{z} \in \mathbb{R}^{d}$ be a fixed non-zero vector. Then,
	$$ \frac{\| \mvec{z} \|^2}{\mvec{z}^T (\mtx{A}^T\mtx{A})^{-1} \mvec{z} } \sim \chi^2_{D-d+1}.$$
\end{lemma}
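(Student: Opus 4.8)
The plan is to reduce to a canonical direction using the orthogonal invariance of Gaussian matrices, and then to identify the reciprocal of the Rayleigh quotient with the squared length of a Gaussian residual. First I would exploit scale invariance: both $\| \mvec{z} \|^2$ and $\mvec{z}^T(\mtx{A}^T\mtx{A})^{-1}\mvec{z}$ scale quadratically in $\mvec{z}$, so the ratio is unchanged if $\mvec{z}$ is replaced by $\mvec{z}/\|\mvec{z}\|$ and I may assume $\| \mvec{z} \| = 1$. Next, choosing any orthogonal $\mtx{Q} \in \mathbb{R}^{d\times d}$ with $\mtx{Q}\mvec{e}_1 = \mvec{z}$, I would write $\mvec{z}^T(\mtx{A}^T\mtx{A})^{-1}\mvec{z} = \mvec{e}_1^T\big((\mtx{A}\mtx{Q})^T(\mtx{A}\mtx{Q})\big)^{-1}\mvec{e}_1$, using $\mtx{Q}^T(\mtx{A}^T\mtx{A})^{-1}\mtx{Q} = (\mtx{Q}^T\mtx{A}^T\mtx{A}\mtx{Q})^{-1}$. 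By \Cref{thm: orthog_inv_of_Gaussian_matrices}, $\mtx{A}\mtx{Q}$ is again a $D\times d$ Gaussian matrix, so the quantity in question has the same distribution as the $(1,1)$ entry of $(\mtx{A}^T\mtx{A})^{-1}$. It therefore suffices to show that the reciprocal of this entry follows $\chi^2_{D-d+1}$.

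Then I would give this entry a geometric meaning. Partitioning $\mtx{A} = [\mvec{a}_1 \; \mtx{A}_2]$ into its first column and the remaining $D\times(d-1)$ block, the Schur-complement formula for the inverse of $\mtx{A}^T\mtx{A}$ gives
$$ \frac{1}{\mvec{e}_1^T(\mtx{A}^T\mtx{A})^{-1}\mvec{e}_1} = \mvec{a}_1^T\mvec{a}_1 - \mvec{a}_1^T\mtx{A}_2(\mtx{A}_2^T\mtx{A}_2)^{-1}\mtx{A}_2^T\mvec{a}_1 = \| (\mtx{I}_D - \mtx{P})\mvec{a}_1\|^2, $$
where $\mtx{P} = \mtx{A}_2(\mtx{A}_2^T\mtx{A}_2)^{-1}\mtx{A}_2^T$ is the orthogonal projector onto $\range(\mtx{A}_2)$; here $\mtx{A}_2^T\mtx{A}_2$ is invertible almost surely by \Cref{thm: Wishart_is_nonsingular}. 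Thus the reciprocal Rayleigh quotient is exactly the squared length of the component of the first column orthogonal to the span of the other $d-1$ columns.

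Finally I would condition on $\mtx{A}_2$. Since the columns of $\mtx{A}$ are independent standard Gaussian vectors, $\mvec{a}_1$ is independent of $\mtx{A}_2$ and standard normal in $\mathbb{R}^D$; conditionally on $\mtx{A}_2$, the projector $\mtx{I}_D - \mtx{P}$ is fixed with rank $D - (d-1) = D-d+1$ almost surely. The squared norm of a standard Gaussian vector projected onto a fixed $k$-dimensional subspace is $\chi^2_k$, which is seen by choosing an orthonormal basis of that subspace so that the relevant coordinates of $\mvec{a}_1$ are $k$ independent standard normals. Hence the conditional distribution of $\|(\mtx{I}_D-\mtx{P})\mvec{a}_1\|^2$ is $\chi^2_{D-d+1}$ regardless of $\mtx{A}_2$, and taking expectation over $\mtx{A}_2$ yields the same unconditional distribution. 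The main obstacle is setting up the Schur-complement identity together with the orthogonal reduction cleanly; once the reciprocal quotient is recognised as a Gaussian residual norm, the chi-squared conclusion follows immediately from the conditioning argument.
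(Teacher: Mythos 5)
Your proof is correct, but note that the paper does not actually prove this lemma: it imports it verbatim from the literature (Corollary 3.3.13.1 of Gupta and Nagar \cite{Gupta1999}), so there is no internal argument to compare against. What you supply is a self-contained, elementary derivation of that cited result, and every step checks out: the scale-invariance and orthogonal reduction to the $(1,1)$ entry of $(\mtx{A}^T\mtx{A})^{-1}$ is legitimate via \Cref{thm: orthog_inv_of_Gaussian_matrices} (with $\mtx{U}=\mtx{I}_D$, $\mtx{V}=\mtx{Q}$); the Schur-complement identity correctly identifies the reciprocal of that entry with $\|(\mtx{I}_D-\mtx{P})\mvec{a}_1\|^2$, where the almost-sure invertibility of $\mtx{A}_2^T\mtx{A}_2$ and the almost-sure rank count $D-(d-1)=D-d+1$ for $\mtx{I}_D-\mtx{P}$ are covered by \Cref{thm: Wishart_is_nonsingular}; and the conditioning on $\mtx{A}_2$ is sound because $\mvec{a}_1$ is independent of $\mtx{A}_2$ by \Cref{def: Gaussian_matrix}, so the conditional law $\chi^2_{D-d+1}$ (squared norm of a standard Gaussian projected onto a fixed $(D-d+1)$-dimensional subspace) does not depend on the conditioning variable and is therefore also the unconditional law. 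This is essentially the classical textbook proof of the Gupta--Nagar corollary, built on the block-inverse formula and Gaussian conditioning; what it buys over the paper's citation is self-containedness and transparency about where each hypothesis ($D\ge d$, $\mvec{z}\neq\mvec{0}$, independence of entries) is used, at the cost of length.
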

In the following lemma we derive an upper bound for the cumulative density function (c.d.f.) of the chi-squared random variable.
\begin{lemma}\label{lemma: chi-sq_tail_bound}
	Let $X \sim \chi^2_n$. Then,
	$$ \prob[X \leq x] \leq \frac{4}{n(n+2)\Gamma(n/2)}\bigg(1+\frac{n}{2}e^{-x/2}\bigg)(x/2)^{n/2}. $$
\end{lemma}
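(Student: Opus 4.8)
The plan is to reduce the statement to an elementary monotonicity argument about the lower incomplete gamma function. First I would write the claim in terms of the $\chi^2_n$ density $f(t) = t^{n/2-1}e^{-t/2}/(2^{n/2}\Gamma(n/2))$ and perform the substitution $u = t/2$, obtaining
$$ \prob[X \leq x] = \frac{1}{\Gamma(n/2)}\int_0^{x/2} u^{n/2-1} e^{-u}\,du. $$
Setting $a = n/2$ and $s = x/2$, and using $4/(n(n+2)) = 1/(a(a+1))$ together with $n/2 = a$ and $(x/2)^{n/2} = s^a$, the asserted bound is equivalent (after clearing the common factor $1/\Gamma(n/2)$) to the inequality
$$ \int_0^s u^{a-1} e^{-u}\,du \;\leq\; \frac{s^a}{a(a+1)} + \frac{e^{-s}s^a}{a+1}, \qquad s \geq 0. $$

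Next I would prove this last inequality directly. Define $H(s)$ to be the right-hand side minus the left-hand side; since $a > 0$ (as $n \geq 1$) all three terms vanish at $s=0$, so $H(0) = 0$ and it suffices to show $H$ is nondecreasing, i.e. $H'(s) \geq 0$. Differentiating, and applying the fundamental theorem of calculus to the integral term, the polynomial-times-exponential contributions cancel cleanly, leaving
$$ H'(s) = \frac{s^{a-1}}{a+1}\bigl(1 - (1+s)e^{-s}\bigr). $$

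It then remains to observe the elementary inequality $(1+s)e^{-s} \leq 1$ for all $s \geq 0$: setting $\phi(s) = 1 - (1+s)e^{-s}$ we have $\phi(0) = 0$ and $\phi'(s) = s e^{-s} \geq 0$, so $\phi \geq 0$. Hence $H'(s) \geq 0$, which gives $H(s) \geq H(0) = 0$ and completes the argument. The only delicate point is the bookkeeping in differentiating $H$ --- arranging the two terms on the right-hand side of the target inequality so that the cancellation produces exactly the factor $1 - (1+s)e^{-s}$; everything after that is a one-line monotonicity check. I expect this to be the main (and essentially the only) obstacle, and it is primarily a matter of choosing the correct reformulation rather than any substantive difficulty.
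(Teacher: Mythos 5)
Your proof is correct, and I verified the key computation: differentiating $H(s) = \frac{s^a}{a(a+1)} + \frac{e^{-s}s^a}{a+1} - \int_0^s u^{a-1}e^{-u}\,du$ indeed gives $H'(s) = \frac{s^{a-1}}{a+1}\bigl(1-(1+s)e^{-s}\bigr) \geq 0$, and $H(0)=0$ since $a = n/2 > 0$. The route differs from the paper's in one substantive way: both proofs begin identically, rewriting $\prob[X \leq x]$ as $\gamma(n/2,x/2)/\Gamma(n/2)$ with $\gamma$ the lower incomplete gamma function, but the paper then simply cites an upper bound on $\gamma(n/2,x/2)$ from the literature (Theorem 4.1 of Neuman's 2013 paper on incomplete gamma inequalities), whereas you prove exactly that inequality from scratch via the monotonicity argument. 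What your version buys is a self-contained, elementary proof that verifies the constant $4/(n(n+2))$ and the structure of the bound rather than outsourcing them; what the paper's version buys is brevity and a pointer to a reference containing this and related estimates. Functionally the two are equivalent --- your argument is, in effect, a proof of the cited lemma specialized to half-integer or general positive parameter $a$, which is all that is needed here.
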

\begin{proof}
	Recall the c.d.f. of the chi-square random variable (see, e.g., \cite{NIST_chi}):
	$$ \prob[X \leq x] = \frac{\gamma(n/2,x/2)}{\Gamma(n/2)}$$
	for $x > 0$, where $\gamma(n/2,x/2)$ is the lower incomplete gamma function (see \cite{Neuman2013}) defined as 
	$$ \gamma(n/2,x/2) = \int_{0}^{x/2} u^{n/2-1}e^{-u}du. $$
	We obtain the desired result by applying the following upper bound on $\gamma(n/2,x/2)$ (see \cite[Theorem 4.1]{Neuman2013}),
	$$\gamma(n/2,x/2) \leq \frac{4}{n(n+2)}\bigg(1+\frac{n}{2}e^{-x/2}\bigg)(x/2)^{n/2}.$$
\end{proof}

\subsection{The inverse chi-squared random variable}
\begin{definition}[Inverse chi-squared random variable] \label{def: inv_chi-squared}
	Given $X \sim \chi_n^2$, a random variable $Y = 1/X$ is said to follow the inverse chi-squared distribution with $n$ degrees of freedom. We denote this by $Y \sim 1/\chi_n^2$ (see \cite[A5]{Lee2012}).  
\end{definition}
\begin{lemma}\label{lemma: expectation_inverse_chi_squared} (see \cite[A5]{Lee2012})
	Let $Y \sim 1/\chi_n^2$ and $W = sY$ for a positive real $s$. Then,
	$$ \mathbb{E}[W] = \frac{s}{n - 2} $$
	provided that $n > 2$. 
\end{lemma}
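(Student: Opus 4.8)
The plan is to reduce everything to a single moment computation for the chi-squared distribution. Since $Y \sim 1/\chi_n^2$ means $Y = 1/X$ for some $X \sim \chi_n^2$, we have $W = sY = s/X$, and so $\mathbb{E}[W] = s\,\mathbb{E}[1/X]$ by linearity. The entire lemma therefore hinges on showing that $\mathbb{E}[1/X] = 1/(n-2)$ whenever $n > 2$, after which multiplying by $s$ gives the stated result immediately.

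To compute $\mathbb{E}[1/X]$, I would write down the density of $X \sim \chi_n^2$, namely
$$ f_X(x) = \frac{1}{2^{n/2}\Gamma(n/2)}\, x^{n/2-1} e^{-x/2}, \qquad x > 0, $$
and set up the integral
$$ \mathbb{E}[1/X] = \frac{1}{2^{n/2}\Gamma(n/2)} \int_0^\infty x^{n/2-2} e^{-x/2}\, dx. $$
The substitution $u = x/2$ converts the remaining integral into the standard Gamma integral $\int_0^\infty u^{n/2-2} e^{-u}\, du = \Gamma(n/2-1)$ up to the factor $2^{n/2-1}$, yielding $\mathbb{E}[1/X] = \Gamma(n/2-1)/\big(2\,\Gamma(n/2)\big)$.

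Finally I would simplify using the functional equation $\Gamma(n/2) = (n/2 - 1)\,\Gamma(n/2-1)$, which gives $\Gamma(n/2-1)/\Gamma(n/2) = 2/(n-2)$ and hence $\mathbb{E}[1/X] = 1/(n-2)$. Substituting back produces $\mathbb{E}[W] = s/(n-2)$ as claimed.

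The one point that genuinely requires care --- and which is the source of the hypothesis $n > 2$ --- is the convergence of the integral near $x = 0$. The integrand behaves like $x^{n/2-2}$ there, so the integral converges precisely when $n/2 - 2 > -1$, i.e. $n > 2$; equivalently, $\Gamma(n/2-1)$ is finite exactly in this range. For $n \leq 2$ the expectation diverges, so the main (and only) obstacle is simply recognising and stating this integrability threshold correctly; the remaining manipulation is a routine Gamma-function calculation.
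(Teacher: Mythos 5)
Your proof is correct, and every step checks out: the reduction $\mathbb{E}[W] = s\,\mathbb{E}[1/X]$, the Gamma integral giving $\mathbb{E}[1/X] = \Gamma(n/2-1)/\bigl(2\,\Gamma(n/2)\bigr)$, the simplification via $\Gamma(n/2) = (n/2-1)\,\Gamma(n/2-1)$, and the identification of $n>2$ as exactly the integrability threshold at $x=0$. Note, however, that the paper does not prove this lemma at all --- it is stated with a citation to a standard reference (Lee, \emph{Bayesian Statistics}, Appendix A5) and used as a black box in \Cref{cor: exp_val_of_norm_y_2^*}. So there is no paper proof to compare against; what you have done is supply the self-contained computation that the paper outsources. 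Your route (computing $\mathbb{E}[1/X]$ directly from the $\chi^2_n$ density) is the natural one; an equivalent alternative would be to integrate against the inverse chi-squared density $h(n,y) = \frac{1}{2^{n/2}\Gamma(n/2)}\, y^{-n/2-1} e^{-1/(2y)}$, which the paper quotes elsewhere (in the proof of \Cref{lemma: pdf_of_sqrt_Inv_Chi}), but this is the same calculation after the substitution $y = 1/x$. Your explicit identification of why $n>2$ is needed --- divergence of $\int_0^\varepsilon x^{n/2-2}\,dx$ when $n \le 2$ --- is a point the cited reference states but the paper never discusses, and it is worth having on record since the corollary that invokes this lemma imposes precisely the corresponding condition $d - d_e > 1$.
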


\begin{lemma}\label{lemma: pdf_of_sqrt_Inv_Chi}
	Let $Y$ and $R$ be two random variables such that $Y \sim 1/\chi^2_n$ and $R = \sqrt{Y}$. Then, the probability density function (p.d.f.) $g(\hat{r})$ of $R$ is given by
	$$ g(\hat{r}) = \frac{2^{-n/2+1}}{\Gamma(n/2)} \hat{r}^{-n-1} e^{-1/(2\hat{r}^2)}. $$
\end{lemma}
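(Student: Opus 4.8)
The plan is to obtain $g$ by a single monotone change of variables starting directly from the density of a chi-squared random variable, bypassing the intermediate density of $Y$. First I would recall that $X \sim \chi^2_n$ has probability density function $f_X(x) = \tfrac{1}{2^{n/2}\Gamma(n/2)}\, x^{n/2-1} e^{-x/2}$ for $x > 0$ (and $0$ otherwise). By \Cref{def: inv_chi-squared} we have $Y = 1/X$, and since $R = \sqrt{Y}$ we may write $R = X^{-1/2}$ as an explicit function of $X$. Because $X$ is almost surely positive, the map $x \mapsto x^{-1/2}$ is a strictly decreasing smooth bijection from $(0,\infty)$ onto $(0,\infty)$, with inverse $x = \hat r^{-2}$; this is exactly the setting in which the standard univariate transformation formula for densities applies.

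Next I would compute the Jacobian of the inverse map: from $x = \hat r^{-2}$ we get $\tfrac{dx}{d\hat r} = -2\hat r^{-3}$, hence $\left| \tfrac{dx}{d\hat r} \right| = 2\hat r^{-3}$ on $\hat r > 0$. Substituting into the change-of-variables formula gives $g(\hat r) = f_X(\hat r^{-2})\,\bigl| \tfrac{dx}{d\hat r} \bigr| = \tfrac{1}{2^{n/2}\Gamma(n/2)}\,(\hat r^{-2})^{n/2-1} e^{-1/(2\hat r^2)}\cdot 2\hat r^{-3}$ for $\hat r > 0$. It then remains to collect the powers of $\hat r$ and the powers of $2$: the factor $(\hat r^{-2})^{n/2-1} = \hat r^{-n+2}$ multiplied by $\hat r^{-3}$ yields $\hat r^{-n-1}$, while the constant $2/2^{n/2} = 2^{-n/2+1}$, producing the claimed expression.

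There is no genuine analytical obstacle here; the only point requiring care is the bookkeeping of the exponents and the constant, and the observation that the support of $R$ is $(0,\infty)$ so that the transformation is a bijection and no case analysis of monotonicity is needed. As an equivalent alternative route, one could first derive the density of $Y \sim 1/\chi^2_n$ by the analogous one-step substitution $y = 1/x$ and then apply $R = \sqrt{Y}$ as a second change of variables; both compositions give the same answer, but the direct route $R = X^{-1/2}$ is shorter and avoids an intermediate simplification. I would therefore present the direct computation, noting explicitly that the almost-sure positivity of $X$ (guaranteed since $X \sim \chi^2_n$ is supported on $(0,\infty)$) legitimises the use of $x^{-1/2}$ throughout.
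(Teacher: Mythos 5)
Your proof is correct, and it takes a somewhat different route from the paper's. The paper differentiates the distribution function, writing $g(\hat{r}) = \tfrac{d}{d\hat{r}}\,\mathbb{P}[Y < \hat{r}^2] = 2\hat{r}\,h(n,\hat{r}^2)$, and then substitutes the density $h(n,y) = \tfrac{1}{2^{n/2}\Gamma(n/2)}\,y^{-n/2-1}e^{-1/(2y)}$ of the inverse chi-squared variable $Y$, which it takes as known from a cited reference (Lee, A5); the final exponent bookkeeping is the same as yours. You instead bypass the density of $Y$ entirely: starting from the standard $\chi^2_n$ density and writing $R = X^{-1/2}$, you collapse the inversion and the square root into a single monotone change of variables, with the Jacobian $\lvert dx/d\hat{r}\rvert = 2\hat{r}^{-3}$ carrying all the work. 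What your route buys is self-containedness --- the only input is the elementary chi-squared density, so no external fact about the inverse chi-squared distribution is needed --- and it correctly handles the decreasing direction of the map via the absolute value of the Jacobian. What the paper's route buys is brevity given its appendix infrastructure: since \Cref{def: inv_chi-squared} already defines $Y \sim 1/\chi^2_n$ and the reference supplies $h(n,\cdot)$, the CDF differentiation is a one-line step, and the same cited density $h$ is reused elsewhere in the paper's arguments. Your algebra checks out exactly: $(\hat{r}^{-2})^{n/2-1}\cdot\hat{r}^{-3} = \hat{r}^{-n-1}$ and $2/2^{n/2} = 2^{-n/2+1}$, matching the claimed expression.
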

\begin{proof}
	The p.d.f. $g(\hat{r})$ of $R$ satisfies
	\begin{equation} \label{eq: 2tq(t^2)}
	g(\hat{r}) = \frac{d}{d\hat{r}} \prob[ \sqrt{Y} < \hat{r}] = \frac{d}{d\hat{r}} \prob[Y < \hat{r}^2] = 2\hat{r}h(n, \hat{r}^2),
	\end{equation}
	where  $h(n, \cdot)$ denotes the p.d.f. of the inverse chi-squared random variable $Y$ given by (see, e.g., \cite[A5]{Lee2012})
	$$\text{$h(n,y) = \frac{1}{2^{n/2}\Gamma(n/2)}y^{-n/2-1}e^{-1/(2y)}$ for $y > 0$.}$$
\end{proof}

\subsection{Spherically distributed random vectors}

\begin{definition}\label{def: spherical_distribution}
	An $D\times 1$ random vector $\mtx{x}$ is said to have a spherical distribution if for every orthogonal $D \times D$ matrix $\mtx{U}$,
	$$ \mtx{U}\mvec{x} \stackrel{law}{=} \mvec{x}.$$
\end{definition}

Below are some useful facts about symmetrically distributed random vectors. 

\begin{lemma} \label{lemma: Fang_equally_distributed_random_vectors} \cite[p.~13]{Fang1990}
	Let $\mvec{x}$ and $\mvec{y}$ be random vectors such that $\mvec{x} \stackrel{law}{=} \mvec{y}$ and let $f_i(\cdot)$, $i=1,2,\dots,m$, be measurable functions. Then,
	$$ \begin{pmatrix}
	f_1(\mvec{x}) & f_2(\mvec{x}) & \dots & f_m(\mvec{x}) 
	\end{pmatrix}^T \stackrel{law}{=}  \begin{pmatrix}
	f_1(\mvec{y}) & f_2(\mvec{y}) & \dots & f_m(\mvec{y})
	\end{pmatrix}^T.$$
	
\end{lemma}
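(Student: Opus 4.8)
The plan is to reduce the statement to the defining property of equality in distribution, namely that two random vectors share a law precisely when they assign equal probability to every Borel set. First I would bundle the $m$ measurable functions into a single vector-valued map. Letting $n$ denote the common dimension of $\mvec{x}$ and $\mvec{y}$, define $F:\mathbb{R}^n \to \mathbb{R}^m$ by $F(\mvec{u}) = \begin{pmatrix} f_1(\mvec{u}) & f_2(\mvec{u}) & \dots & f_m(\mvec{u}) \end{pmatrix}^T$. The two random vectors appearing in the conclusion are then exactly $F(\mvec{x})$ and $F(\mvec{y})$, so the claim is equivalent to $F(\mvec{x}) \stackrel{law}{=} F(\mvec{y})$.

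The key preliminary step is to verify that $F$ is (Borel) measurable. This follows from the standard fact that a map into $\mathbb{R}^m$ is Borel measurable if and only if each of its coordinate functions is Borel measurable; since each $f_i$ is assumed measurable, $F$ inherits measurability. Consequently, for every Borel set $C \subseteq \mathbb{R}^m$, the preimage $F^{-1}(C)$ is a Borel subset of $\mathbb{R}^n$, and both events $\{ F(\mvec{x}) \in C \}$ and $\{ F(\mvec{y}) \in C \}$ are genuinely measurable.

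With measurability in hand, the result is immediate by a change of variables at the level of preimages. For an arbitrary Borel set $C \subseteq \mathbb{R}^m$,
$$\prob[F(\mvec{x}) \in C] = \prob[\mvec{x} \in F^{-1}(C)] = \prob[\mvec{y} \in F^{-1}(C)] = \prob[F(\mvec{y}) \in C],$$
where the middle equality is precisely the hypothesis $\mvec{x} \stackrel{law}{=} \mvec{y}$ applied to the Borel set $F^{-1}(C)$. Since $C$ was arbitrary, $F(\mvec{x})$ and $F(\mvec{y})$ induce the same law on $\mathbb{R}^m$, which is the desired conclusion.

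I expect no substantial obstacle here; the only point requiring genuine care is the measurability of the combined map $F$, which is exactly the reason the hypothesis is stated for \emph{measurable} $f_i$ rather than for a more restrictive class. Everything else is a direct unwinding of the definition of equality in distribution, and the argument makes no use of the particular form of the underlying law, so it applies verbatim to the spherically distributed vectors for which this lemma will later be invoked.
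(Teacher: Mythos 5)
Your proof is correct. Note that the paper itself does not prove this lemma at all --- it is quoted directly from Fang, Kotz and Ng \cite[p.~13]{Fang1990} as a background fact --- so there is no in-paper argument to compare against; your pushforward argument (bundle the $f_i$ into one map $F$, note $F$ is measurable iff its coordinates are, then transport any Borel set $C$ through $F^{-1}$ and invoke equality of the laws of $\mvec{x}$ and $\mvec{y}$ on the Borel set $F^{-1}(C)$) is precisely the standard textbook proof of this fact, and it is complete. The only point worth making explicit is that ``measurable'' must be read as \emph{Borel} measurable throughout (the convention in \cite{Fang1990}, and the reading you implicitly adopt): the hypothesis $\mvec{x} \stackrel{law}{=} \mvec{y}$ guarantees equal probabilities only on Borel sets, so the middle equality in your display needs $F^{-1}(C)$ to be Borel, which fails in general for merely Lebesgue-measurable $f_i$.
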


\begin{lemma}\label{lemma: x=ru} (\cite[Corollary, p. 30]{Fang1990})
	If $D\times 1$ random vector $\mvec{x}$ has a spherical distribution, then
	$$ \mvec{x} \stackrel{law}{=} r\mvec{u}, $$
	where $\mvec{u}$ is distributed uniformly on the unit sphere $S^D$ and $r$ is a univariate random variable independent of $\mvec{u}$.
	
\end{lemma}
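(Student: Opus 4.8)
The plan is to realise the decomposition explicitly by setting $r = \|\mvec{x}\|$ and $\mvec{u} = \mvec{x}/\|\mvec{x}\|$ on the event $\{r > 0\}$, while defining $\mvec{u}$ as an independent uniform draw on the event $\{r = 0\}$ (on which $r\mvec{u} = \mvec{0} = \mvec{x}$ regardless of the choice). With this construction the identity $\mvec{x} = r\mvec{u}$ holds pointwise, so the only substantive claims left to verify are that $\mvec{u}$ is uniformly distributed on $S^D$ and that $r$ is independent of $\mvec{u}$.

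First I would extract a single invariance statement for the pair $(r,\mvec{u})$. Fix an orthogonal matrix $\mtx{U}$. Since $\mvec{x}$ has a spherical distribution, $\mtx{U}\mvec{x} \stackrel{law}{=} \mvec{x}$ by \Cref{def: spherical_distribution}. Applying \Cref{lemma: Fang_equally_distributed_random_vectors} to the two measurable maps $\mvec{v} \mapsto \|\mvec{v}\|$ and $\mvec{v} \mapsto \mvec{v}/\|\mvec{v}\|$, and using the identities $\|\mtx{U}\mvec{x}\| = \|\mvec{x}\|$ and $\mtx{U}\mvec{x}/\|\mtx{U}\mvec{x}\| = \mtx{U}\mvec{u}$, I obtain $(r, \mtx{U}\mvec{u}) \stackrel{law}{=} (r, \mvec{u})$ for every orthogonal $\mtx{U}$. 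This single relation encodes both claims.

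Marginalising out $r$ yields $\mtx{U}\mvec{u} \stackrel{law}{=} \mvec{u}$ for all orthogonal $\mtx{U}$, so the law of $\mvec{u}$ is rotation-invariant on $S^D$; since the uniform distribution is the unique orthogonally invariant Borel probability measure on the sphere, $\mvec{u}$ is uniform. For the independence I would disintegrate the joint law of $(r,\mvec{u})$ as $\nu_r \otimes \kappa_r$, where $\nu_r$ is the law of $r$ and $\kappa_r(\cdot)$ is the regular conditional distribution of $\mvec{u}$ given $r$. The invariance $(r, \mtx{U}\mvec{u}) \stackrel{law}{=} (r, \mvec{u})$ leaves the $r$-marginal fixed and pushes $\kappa_r$ forward by $\mtx{U}$; by the essential uniqueness of the disintegration this forces $\mtx{U}_{*}\kappa_r = \kappa_r$ for $\nu_r$-almost every $r$. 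Uniqueness of the invariant measure then identifies $\kappa_r$ with the uniform law for almost every $r$, so $\kappa_r$ does not depend on $r$ — which is exactly the assertion that $\mvec{u}$ is independent of $r$ and uniform.

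The main obstacle is this independence step: passing from the joint invariance to ``$\kappa_r$ is rotation-invariant for almost every $r$'' requires the existence and essential uniqueness of regular conditional distributions, together with an interchange of the (uncountable) family of rotations with the almost-everywhere qualifier. I would handle the latter by restricting to a countable dense subgroup of the orthogonal group and invoking continuity of the relevant push-forwards, after which the uniqueness of the invariant (Haar) measure on $S^D$ closes the argument.
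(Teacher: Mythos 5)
Your proof is correct, but note that the paper itself does not prove this lemma at all: it is imported verbatim from the literature (\cite[Corollary, p.~30]{Fang1990}), so what you have reconstructed is essentially the standard textbook argument behind that citation rather than an alternative to anything in the paper. Your construction ($r = \|\mvec{x}\|$, $\mvec{u} = \mvec{x}/\|\mvec{x}\|$, patched by an independent uniform draw on the event $\{r = 0\}$), the joint invariance $(r,\mtx{U}\mvec{u}) \stackrel{law}{=} (r,\mvec{u})$ obtained from \Cref{def: spherical_distribution} together with \Cref{lemma: Fang_equally_distributed_random_vectors}, and the identification of the law of $\mvec{u}$ via uniqueness of the rotation-invariant Borel probability measure on the sphere are all sound. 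You also correctly flag the one genuine technical point --- interchanging the uncountable family of rotations with the almost-everywhere qualifier attached to the disintegration --- and the fix you propose (a countable dense subgroup of the orthogonal group plus weak continuity of push-forwards) is the standard one and works.

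One simplification worth recording: the regular-conditional-distribution machinery can be bypassed entirely. Since the joint invariance $(r,\mtx{U}\mvec{u}) \stackrel{law}{=} (r,\mvec{u})$ holds exactly for every fixed orthogonal $\mtx{U}$ (there is no almost-everywhere qualifier at this stage), you may condition on events rather than on values of $r$: for any Borel set $A \subseteq [0,\infty)$ with $\prob[r \in A] > 0$, the measure $B \mapsto \prob[\mvec{u} \in B \mid r \in A]$ is an orthogonally invariant Borel probability measure on the sphere, hence equals the uniform measure $\sigma$. Consequently $\prob[r \in A,\, \mvec{u} \in B] = \prob[r \in A]\,\sigma(B)$ for all Borel $A$ and $B$, which yields uniformity of $\mvec{u}$ and independence of $r$ and $\mvec{u}$ in one stroke, with no disintegration, no null-set bookkeeping, and no density-plus-continuity argument.
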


\begin{theorem} \label{thm: Fang1990_r=x_and_u=x^-1x} (\cite[Theorem 2.3]{Fang1990})
	Let $\mvec{x} \stackrel{law}{=} r\mvec{u}$ be a spherically distributed $D \times 1$ random vector with $\prob[\mvec{x} = \mvec{0}] = 0$. Then,
	$$ \text{$\| \mvec{x} \| \stackrel{law}{=} r$ and $\| \mvec{x} \|^{-1}\mvec{x} \stackrel{law}{=} \mvec{u}$}, $$
	Moreover, $\| \mvec{x} \|$ and $\|\mvec{x}\|^{-1} \mvec{x}$ are independent. 
\end{theorem}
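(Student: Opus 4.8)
The plan is to obtain the theorem as a direct consequence of the radial representation supplied by \Cref{lemma: x=ru} together with the transfer principle of \Cref{lemma: Fang_equally_distributed_random_vectors}. By \Cref{lemma: x=ru}, since $\mvec{x}$ is spherically distributed we may write $\mvec{x} \stackrel{law}{=} r\mvec{u}$, where $\mvec{u}$ is uniform on the unit sphere, $r \geq 0$ is a scalar random variable, and $r$ and $\mvec{u}$ are independent. The hypothesis $\prob[\mvec{x} = \mvec{0}] = 0$ transfers to $\prob[r = 0] = 0$, so the normalised vector $\|\mvec{x}\|^{-1}\mvec{x}$ is defined almost surely and every map used below is well-defined off a null set.

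First I would introduce the measurable map $\Phi(\mvec{v}) = \big(\|\mvec{v}\|,\, \|\mvec{v}\|^{-1}\mvec{v}\big)$ on $\mathbb{R}^D \setminus \{\mvec{0}\}$. Viewing its $D+1$ coordinates as scalar measurable functions $f_0(\mvec{v}) = \|\mvec{v}\|$ and $f_i(\mvec{v}) = v_i/\|\mvec{v}\|$ for $i = 1,\dots,D$, applying \Cref{lemma: Fang_equally_distributed_random_vectors} to $\mvec{x} \stackrel{law}{=} r\mvec{u}$ yields
$$ \big(\|\mvec{x}\|,\, \|\mvec{x}\|^{-1}\mvec{x}\big) \stackrel{law}{=} \big(\|r\mvec{u}\|,\, \|r\mvec{u}\|^{-1}r\mvec{u}\big). $$
The next step is to simplify the right-hand side: since $\|\mvec{u}\| = 1$ and $r \geq 0$ we have $\|r\mvec{u}\| = r$, and on the full-probability event $\{r > 0\}$ the second coordinate equals $r^{-1}r\mvec{u} = \mvec{u}$. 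Hence the right-hand side is exactly the pair $(r, \mvec{u})$, giving the joint identity $\big(\|\mvec{x}\|,\, \|\mvec{x}\|^{-1}\mvec{x}\big) \stackrel{law}{=} (r, \mvec{u})$.

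From this joint equality the three conclusions follow quickly. Reading off the marginals gives $\|\mvec{x}\| \stackrel{law}{=} r$ and $\|\mvec{x}\|^{-1}\mvec{x} \stackrel{law}{=} \mvec{u}$. For independence, I would invoke that $r$ and $\mvec{u}$ are independent by \Cref{lemma: x=ru}, so the law of $(r,\mvec{u})$ is a product measure; since the left-hand pair has the same joint law, its law factors as the product of the two marginals just identified, which is precisely the statement that $\|\mvec{x}\|$ and $\|\mvec{x}\|^{-1}\mvec{x}$ are independent.

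I expect the only genuine subtlety to be the bookkeeping on the null set $\{\mvec{x} = \mvec{0}\}$ (equivalently $\{r = 0\}$), where $\Phi$ is undefined; this is exactly what the hypothesis $\prob[\mvec{x}=\mvec{0}]=0$ neutralises, so it causes no real difficulty. A secondary point worth stating carefully is the sign convention $r \geq 0$ in the representation of \Cref{lemma: x=ru}: were $r$ not taken non-negative, one would instead read $\|\mvec{x}\| \stackrel{law}{=} |r|$, and recovering $\mvec{u}$ would require the symmetry of the uniform law on the sphere.
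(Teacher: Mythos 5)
Your proof is correct, but note that the paper itself does not prove this statement: it is imported verbatim as \cite[Theorem 2.3]{Fang1990}, so there is no internal proof to compare against, and your argument effectively supplies one from the paper's other quoted lemmas. Your route is the canonical polar-decomposition argument: the map $\Phi(\mvec{v}) = \big(\|\mvec{v}\|,\, \|\mvec{v}\|^{-1}\mvec{v}\big)$ inverts $(r,\mvec{u}) \mapsto r\mvec{u}$ off the null set $\{\mvec{v}=\mvec{0}\}$, so the transfer principle of \Cref{lemma: Fang_equally_distributed_random_vectors} pushes the hypothesis $\mvec{x} \stackrel{law}{=} r\mvec{u}$ to the joint identity $\big(\|\mvec{x}\|,\, \|\mvec{x}\|^{-1}\mvec{x}\big) \stackrel{law}{=} (r,\mvec{u})$, from which both marginal identities and the independence follow at once, since the law of $(r,\mvec{u})$ is a product measure by \Cref{lemma: x=ru}. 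Two points deserve exactly the care you gave them: first, \Cref{lemma: Fang_equally_distributed_random_vectors} is stated for functions defined everywhere, so $\Phi$ should be extended measurably across $\{\mvec{0}\}$ (say $\Phi(\mvec{0}) = (0,\mvec{e}_1)$), which is harmless precisely because $\prob[\mvec{x}=\mvec{0}]=\prob[r=0]=0$ ensures the extension is only used on a null set; second, the simplification $\|r\mvec{u}\|=r$ uses the convention $r\geq 0$, which the paper's statement of \Cref{lemma: x=ru} omits but which is implicit in the source (and is in any case forced if the conclusion $\|\mvec{x}\| \stackrel{law}{=} r$ is to hold, as $\|\mvec{x}\|\geq 0$); your fallback for signed $r$ --- reading $|r|$ and invoking the symmetry of the uniform law on the sphere --- is the right repair. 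In short, the proposal is sound and gives the paper a self-contained derivation where it previously relied on a citation.
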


\begin{theorem} \label{thm: pdf_of_x_Gupta} (see \cite[Theprem 2.1.]{Gupta1997})
	Let $\mvec{x} \stackrel{law}{=} r\mvec{u}$ be a spherically distributed $D \times 1$ random vector with $\prob[\mvec{x} = \mvec{0}] = 0$, where $r$ is independent of $\mvec{u}$ with p.d.f. $h(\cdot)$. Then, p.d.f. $g(\hat{\mvec{x}})$ of $\mvec{x}$ is given by 
	$$ g(\hat{\mvec{x}}) = \frac{\Gamma(n/2)}{2\pi^{D/2}} h(\| \hat{\mvec{x}} \|) \| \hat{\mvec{x}} \|^{1-D}. $$
\end{theorem}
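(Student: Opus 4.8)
The plan is to obtain the density of $\mvec{x}$ by passing to spherical (polar) coordinates and exploiting the independence of the radial and angular parts that spherical symmetry provides. The radial density $h$ and the uniformity of the direction on the unit sphere determine the law of $\mvec{x}$ completely; all that remains is to transport this product structure back to Cartesian coordinates, keeping track of the polar Jacobian and the normalising constant of the uniform law on the sphere.

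First I would invoke \Cref{thm: Fang1990_r=x_and_u=x^-1x}: since $\mvec{x} \stackrel{law}{=} r\mvec{u}$ is spherical with $\prob[\mvec{x}=\mvec{0}]=0$, the radial variable $\|\mvec{x}\|$ has the same law as $r$ (hence density $h$), the normalised direction $\|\mvec{x}\|^{-1}\mvec{x}$ is distributed as $\mvec{u}$, i.e. uniformly on the unit sphere $S^{D-1}$ (the unit sphere in $\mathbb{R}^D$), and the two are independent. It therefore suffices to identify the joint law of $(\|\mvec{x}\|,\,\|\mvec{x}\|^{-1}\mvec{x})$ as the product of $h(r)\,dr$ with the uniform probability measure on $S^{D-1}$, and then to change variables. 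I would test against an arbitrary bounded measurable $\phi$ and write
$$ \expv[\phi(\mvec{x})] = \int_0^\infty \bigg( \int_{S^{D-1}} \phi(r\mvec{u}) \, d\sigma(\mvec{u}) \bigg) h(r) \, dr, $$
where $\sigma$ is the uniform probability measure on $S^{D-1}$. Normalising the surface measure $dS$, whose total mass is $\omega_{D-1} = 2\pi^{D/2}/\Gamma(D/2)$, gives $d\sigma = \big(\Gamma(D/2)/(2\pi^{D/2})\big)\, dS$. Substituting this and then applying the polar integration formula $\int_{\mathbb{R}^D} \psi(\hat{\mvec{x}}) \, d\hat{\mvec{x}} = \int_0^\infty \int_{S^{D-1}} \psi(r\mvec{u}) \, r^{D-1} \, dS(\mvec{u}) \, dr$ with $\psi = \phi\, g$ lets me match integrands and read off $g(\hat{\mvec{x}}) = \frac{\Gamma(D/2)}{2\pi^{D/2}} h(\|\hat{\mvec{x}}\|) \|\hat{\mvec{x}}\|^{1-D}$ (identifying the statement's $n$ with the ambient dimension $D$), since the Jacobian factor $r^{D-1}$ must cancel against $\|\hat{\mvec{x}}\|^{1-D}$ to recover the radial density $h$.

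The main obstacle — really the only nonroutine point — is handling the Jacobian of the polar change of coordinates correctly: the factor $r^{D-1}$ from the surface-area element is precisely what converts between the radial density $h(r)$ and the Cartesian density, and obtaining it together with the normalising constant $\Gamma(D/2)/(2\pi^{D/2})$ of the uniform distribution on $S^{D-1}$ is exactly where the stated formula originates. I would also keep the hypothesis $\prob[\mvec{x}=\mvec{0}]=0$ in view throughout, so that the direction $\|\mvec{x}\|^{-1}\mvec{x}$ is almost surely well defined and \Cref{thm: Fang1990_r=x_and_u=x^-1x} legitimately applies.
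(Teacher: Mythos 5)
Your proof is correct. Note, however, that the paper does not prove this statement at all: it is quoted verbatim (typos included) from Gupta and Varga, so there is no internal proof to compare against, and your argument supplies exactly what the paper delegates to that citation. The polar-coordinates computation is the standard route and is carried out properly: the representation from \Cref{thm: Fang1990_r=x_and_u=x^-1x} justifies taking $r \geq 0$ with density $h(\cdot)$ and direction uniform and independent, the normalisation $\mathrm{d}\sigma = \bigl(\Gamma(D/2)/(2\pi^{D/2})\bigr)\,\mathrm{d}S$ is the correct one for the unit sphere in $\mathbb{R}^D$, and the Jacobian factor $r^{D-1}$ cancels against $\|\hat{\mvec{x}}\|^{1-D}$ exactly as you claim, so testing against bounded measurable $\phi$ identifies the density almost everywhere. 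You also correctly resolved the statement's typo by reading $\Gamma(n/2)$ as $\Gamma(D/2)$ (the paper's $S^D$ versus your $S^{D-1}$ for the unit sphere in $\mathbb{R}^D$ is the same harmless notational slip on the paper's side). The only cosmetic remark is that invoking \Cref{thm: Fang1990_r=x_and_u=x^-1x} is slightly more than you need, since the hypothesis already hands you $\mvec{x} \stackrel{law}{=} r\mvec{u}$ with the required independence; but using it to pin down $r \geq 0$ almost surely is a legitimate and careful way to set up the integral over $(0,\infty)$.
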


For more details regarding spherical distributions refer to \cite{Fang1990, Gupta1999, Bernardo2000}. 

\subsection{The least Euclidean norm solution to the  random linear system}

The present section establishes key properties of the least Euclidean norm solution to the underdetermined random linear system.

\begin{enumerate}
	\item[(C)] Let $\bar{\mtx{B}}$ be a $d_e \times d$ Gaussian matrix, where $d_e \leq d$, and let $\mvec{z} \in \mathbb{R}^{d_e}$ be a fixed nonzero vector. Denote by $\mvec{y}_2$ the least $2$-norm solution to $\bar{\mtx{B}}\mvec{y} = \mvec{z}$.
\end{enumerate}

\begin{lemma}\label{lemma: inverse_min_norm_solution_follow_chi-square}
	Given (C), $\mvec{y}_2$ satisfies 
	$$ \frac{\| \mvec{z} \|^2_2}{\| \mvec{y}_2 \|^2_2} \sim \chi^2_{d-d_e+1}.$$
\end{lemma}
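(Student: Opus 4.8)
The plan is to reduce the statement to the Rayleigh-quotient identity of \Cref{lemma: Rayleigh_quotient_chi_square} by first writing $\mvec{y}_2$ in closed form. Since $\bar{\mtx{B}}$ is $d_e \times d$ with $d_e \leq d$, its Gram matrix $\bar{\mtx{B}}\bar{\mtx{B}}^T$ is a Wishart matrix and hence, by \Cref{thm: Wishart_is_nonsingular}, is positive definite (in particular invertible) with probability $1$; thus $\bar{\mtx{B}}$ has full row rank almost surely and the consistent underdetermined system $\bar{\mtx{B}}\mvec{y} = \mvec{z}$ admits the unique least $2$-norm solution
\begin{equation*}
\mvec{y}_2 = \bar{\mtx{B}}^T(\bar{\mtx{B}}\bar{\mtx{B}}^T)^{-1}\mvec{z},
\end{equation*}
exactly as derived in \Cref{cor: y_2_explicit_formula} for the analogous system there.

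The key computation would then be to evaluate the squared norm of this solution. Substituting the formula and using symmetry of $(\bar{\mtx{B}}\bar{\mtx{B}}^T)^{-1}$ gives
\begin{equation*}
\| \mvec{y}_2 \|_2^2 = \mvec{y}_2^T \mvec{y}_2 = \mvec{z}^T (\bar{\mtx{B}}\bar{\mtx{B}}^T)^{-1}\bar{\mtx{B}}\,\bar{\mtx{B}}^T (\bar{\mtx{B}}\bar{\mtx{B}}^T)^{-1}\mvec{z} = \mvec{z}^T (\bar{\mtx{B}}\bar{\mtx{B}}^T)^{-1}\mvec{z},
\end{equation*}
so that the quantity of interest becomes $\|\mvec{z}\|_2^2 / \bigl(\mvec{z}^T (\bar{\mtx{B}}\bar{\mtx{B}}^T)^{-1}\mvec{z}\bigr)$, which is precisely a Rayleigh quotient of an inverse Gram matrix.

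Next I would set $\mtx{A} := \bar{\mtx{B}}^T$ and observe that $\mtx{A}$ is a $d \times d_e$ Gaussian matrix (the transpose merely permutes the i.i.d.\ standard normal entries, so the Gaussian property is preserved directly by \Cref{def: Gaussian_matrix}), and that $\bar{\mtx{B}}\bar{\mtx{B}}^T = \mtx{A}^T\mtx{A}$. With this relabelling, $\mtx{A}$ plays the role of the tall matrix in \Cref{lemma: Rayleigh_quotient_chi_square} under the correspondences $D \leftrightarrow d$ and $d \leftrightarrow d_e$; the required hypothesis $D \geq d$ becomes $d \geq d_e$, which holds, and $\mvec{z} \in \mathbb{R}^{d_e}$ is the fixed nonzero vector. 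Applying that lemma yields
\begin{equation*}
\frac{\|\mvec{z}\|_2^2}{\mvec{z}^T(\mtx{A}^T\mtx{A})^{-1}\mvec{z}} \sim \chi^2_{d-d_e+1},
\end{equation*}
and combining this with the norm computation above gives the claim.

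No genuine obstacle arises here: the argument is essentially a bookkeeping exercise. The one point requiring care is the index matching in \Cref{lemma: Rayleigh_quotient_chi_square} — because that lemma is stated for a tall ($D \geq d$) matrix while our $\bar{\mtx{B}}$ is wide, it is the transpose $\bar{\mtx{B}}^T$ that must be fed into the lemma, and one must verify that the resulting degrees of freedom $D-d+1$ translate to $d-d_e+1$ under the relabelling. The almost-sure invertibility of $\bar{\mtx{B}}\bar{\mtx{B}}^T$ should also be flagged explicitly, so that both the least-norm formula and the inverse appearing in the Rayleigh quotient are well defined.
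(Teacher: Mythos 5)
Your proof is correct and follows essentially the same route as the paper's: write $\mvec{y}_2 = \bar{\mtx{B}}^T(\bar{\mtx{B}}\bar{\mtx{B}}^T)^{-1}\mvec{z}$, compute $\|\mvec{y}_2\|_2^2 = \mvec{z}^T(\bar{\mtx{B}}\bar{\mtx{B}}^T)^{-1}\mvec{z}$, and apply \Cref{lemma: Rayleigh_quotient_chi_square} to the transpose $\bar{\mtx{B}}^T$. Your explicit treatment of the transposition bookkeeping and the almost-sure invertibility of $\bar{\mtx{B}}\bar{\mtx{B}}^T$ is a welcome addition of detail that the paper leaves implicit.
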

\begin{proof}
	The least Euclidean norm solution $\mvec{y}_2$ to $\bar{\mtx{B}}\mvec{y} = \mvec{z}$ is given by
	$$ \mvec{y}_2 = \bar{\mtx{B}}^T(\bar{\mtx{B}}\bar{\mtx{B}}^T)^{-1}\mvec{z}. $$
	For its Euclidean norm, we have
	\begin{equation*} \label{eq: norm_of_y_derivation}
	\begin{aligned}
	\| \mvec{y}_2 \|^2_2 & = (\bar{\mtx{B}}^T(\bar{\mtx{B}}\bar{\mtx{B}}^T)^{-1}\mvec{z})^T\bar{\mtx{B}}^T(\bar{\mtx{B}}\bar{\mtx{B}}^T)^{-1}\mvec{z} \\
	& = \mvec{z}^T (\bar{\mtx{B}}\bar{\mtx{B}}^T)^{-1} \mvec{z},
	\end{aligned}
	\end{equation*}
	Using \Cref{lemma: Rayleigh_quotient_chi_square} we obtain the desired result:
	$$ \frac{\|\mvec{z}\|^2_2}{\|\mvec{y}_2\|^2_2} = \frac{\| \mvec{z} \|^2}{\mvec{z}^T (\bar{\mtx{B}}\bar{\mtx{B}}^T)^{-1} \mvec{z}} \sim \chi^2_{d-d_e+1}. $$
\end{proof}

\begin{lemma} \label{lemma: y_has_spherical_distr}
	Given (C), $\mvec{y}_2$ follows a spherical distribution. 
\end{lemma}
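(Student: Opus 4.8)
The plan is to exploit the orthogonal invariance of Gaussian matrices (\Cref{thm: orthog_inv_of_Gaussian_matrices}) together with the explicit formula $\mvec{y}_2 = \bar{\mtx{B}}^T(\bar{\mtx{B}}\bar{\mtx{B}}^T)^{-1}\mvec{z}$ for the least Euclidean norm solution. To verify the defining property in \Cref{def: spherical_distribution}, I fix an arbitrary $d \times d$ orthogonal matrix $\mtx{Q}$ and aim to show that $\mtx{Q}\mvec{y}_2 \stackrel{law}{=} \mvec{y}_2$. The idea is that premultiplying $\mvec{y}_2$ by $\mtx{Q}$ corresponds to a right orthogonal transformation of the underlying Gaussian matrix, under which the ensemble is invariant.

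First I would introduce $\mtx{C} := \bar{\mtx{B}}\mtx{Q}^T$. Since $\mtx{Q}^T$ is orthogonal, \Cref{thm: orthog_inv_of_Gaussian_matrices} guarantees that $\mtx{C}$ is again a $d_e \times d$ Gaussian matrix, so $\mtx{C} \stackrel{law}{=} \bar{\mtx{B}}$. Next I would compute the least-norm solution of the transformed system $\mtx{C}\mvec{y} = \mvec{z}$. Using $\mtx{Q}^T\mtx{Q} = \mtx{I}$ we have $\mtx{C}\mtx{C}^T = \bar{\mtx{B}}\bar{\mtx{B}}^T$, and hence the least-norm solution is
$$ \mtx{C}^T(\mtx{C}\mtx{C}^T)^{-1}\mvec{z} = \mtx{Q}\bar{\mtx{B}}^T(\bar{\mtx{B}}\bar{\mtx{B}}^T)^{-1}\mvec{z} = \mtx{Q}\mvec{y}_2. $$
In other words, the (measurable) map that sends a full-rank matrix $\mtx{M}$ to the least-norm solution of $\mtx{M}\mvec{y} = \mvec{z}$ sends $\bar{\mtx{B}}$ to $\mvec{y}_2$ and $\mtx{C}$ to $\mtx{Q}\mvec{y}_2$.

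Finally, since $\mtx{C} \stackrel{law}{=} \bar{\mtx{B}}$ and the least-norm solution is a measurable function of the matrix, \Cref{lemma: Fang_equally_distributed_random_vectors} yields $\mtx{Q}\mvec{y}_2 \stackrel{law}{=} \mvec{y}_2$. As $\mtx{Q}$ was an arbitrary orthogonal matrix, this is precisely the defining property of a spherical distribution. I expect the only (brief) obstacle to be identifying the correct transformation $\bar{\mtx{B}} \mapsto \bar{\mtx{B}}\mtx{Q}^T$ and verifying the algebraic identity that the least-norm solution of $(\bar{\mtx{B}}\mtx{Q}^T)\mvec{y} = \mvec{z}$ equals $\mtx{Q}\mvec{y}_2$; once this is in place, the invariance of the Gaussian ensemble and the cited lemma close the argument immediately.
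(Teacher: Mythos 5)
Your proposal is correct and takes essentially the same route as the paper's proof: both exploit the orthogonal invariance of Gaussian matrices (\Cref{thm: orthog_inv_of_Gaussian_matrices}), the algebraic identity that the least-norm solution of $(\bar{\mtx{B}}\mtx{Q}^T)\mvec{y} = \mvec{z}$ equals $\mtx{Q}\mvec{y}_2$, and \Cref{lemma: Fang_equally_distributed_random_vectors} to transfer equality in law. The only difference is that the paper devotes a substantial portion of its proof to rigorously establishing the measurability of the least-norm-solution map (writing its components as polynomials of the entries of $\bar{\mtx{B}}$ divided by the almost-everywhere nonzero determinant of $\bar{\mtx{B}}\bar{\mtx{B}}^T$), a hypothesis of \Cref{lemma: Fang_equally_distributed_random_vectors} that you assert in passing but do not verify.
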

\begin{proof}
	Let $\mtx{S}$ be any $d\times d$ orthogonal matrix. Let $f : \mathbb{R}^{d_e d \times 1} \rightarrow \mathbb{R}^{d\times 1}$ be a vector-valued function defined as
	$$ f(\vc(\bar{\mtx{B}})) = \bar{\mtx{B}}^T(\bar{\mtx{B}}\bar{\mtx{B}}^T)^{-1}\mvec{z},$$
	where $\vc(\bar{\mtx{B}})$ denotes the $Dd\times 1$ vector $(\bar{\mvec{b}}_1^T \; \bar{\mvec{b}}_2^T \; \cdots \bar{\mvec{b}}_d^T)^T$ with $\bar{\mvec{b}}_i$ being the $i$th column vector of $\bar{\mtx{B}}$. Using the fact that the inverse of a matrix is equal to the ratio of its adjugate to its determinant we can express $f$ as
	$$ f(\vc(\bar{\mtx{B}})) = \begin{pmatrix}
	\frac{p_1(\bar{\mtx{B}})}{q(\bar{\mtx{B}})} & \frac{p_2(\bar{\mtx{B}})}{q(\bar{\mtx{B}})} & \dots & \frac{p_d(\bar{\mtx{B}})}{q(\bar{\mtx{B}})} 
	\end{pmatrix}^T, $$
	where $p_i(\bar{\mtx{B}})$ for $1 \leq i \leq d$ are some polynomials of the entries of $\bar{\mtx{B}}$ and $q(\bar{\mtx{B}})$ is the determinant of $\bar{\mtx{B}}\bar{\mtx{B}}^T$. 
	
	We first would like to prove that $f$ is a measurable function. Recall that a function is measurable if and only if each of its components is measurable. It is enough to show that $p_1/q$ is measurable; the same argument will apply to the rest of its components. First, we note that
	\begin{enumerate}[label={(\roman*)}]
		\item $p_1$ and $q$ are measurable;
		\item $q$ is non-zero almost everywhere.
	\end{enumerate}
	To prove (i), observe that the polynomials $p_1$ and $q$ are sums of scalar multiples of products of standard normal random variables, which by definition are measurable. Sums, scalar multiples and products of measurable functions are measurable; hence, $p_1$ and $q$ must be measurable. To prove (ii), we refer to \Cref{thm: Wishart_is_nonsingular}, which says that the matrix $\bar{\mtx{B}}\bar{\mtx{B}}^T$ is positive definite with probability 1 implying that all of its eigenvalues are strictly positive with probability 1. Then, (ii) follows from the fact that the determinant of the symmetric square matrix is equal to the product of its eigenvalues. Now, we can apply \cite[Theorem 4.10]{Wheeden2015} to deduce that $p_1/q$ is measurable; this completes the proof that $f$ is measurable. 
	
	For $\mvec{y}_2 = \bar{\mtx{B}}^T(\bar{\mtx{B}}\bar{\mtx{B}}^T)^{-1}\mvec{z}$, we have
	$$\text{$\mvec{y}_2 =  f(\vc(\bar{\mtx{B}}))$ and $\mtx{S}\mvec{y}_2 = f(\vc(\bar{\mtx{B}}\mtx{S}^T))$}.$$
	According to \Cref{thm: orthog_inv_of_Gaussian_matrices}, $\vc(\bar{\mtx{B}}) \stackrel{law}{=} \vc(\bar{\mtx{B}}\mtx{S}^T)$. Then, by applying \Cref{lemma: Fang_equally_distributed_random_vectors}, we obtain 
	$$ \mvec{y}_2 \stackrel{law}{=} \mtx{S}\mvec{y}_2. $$
	Hence, $\mvec{y}_2$ follows a spherical distribution by \Cref{def: spherical_distribution}.
\end{proof}

\begin{lemma}\label{lemma: pdf_of_y_general_case}
	Given (C), the probability density function of $\mvec{y}_2$ is given by
	$$ g (\hat{\mvec{y}}) = \pi^{-d/2} \bigg[\frac{\Gamma(d/2)}{\Gamma(n/2)}\bigg] \bigg(\frac{\| \mvec{z} \|}{\sqrt{2}}\bigg)^n   (\hat{\mvec{y}}^T\hat{\mvec{y}})^{-(n+d)/2} e^{-\|\mvec{z}\|^2/(2\hat{\mvec{y}}^T\hat{\mvec{y}})},$$
	where $n = d-d_e+1$.
\end{lemma}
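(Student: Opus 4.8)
The plan is to combine the spherical symmetry of $\mvec{y}_2$ (established in \Cref{lemma: y_has_spherical_distr}) with the known distribution of its Euclidean norm (from \Cref{lemma: inverse_min_norm_solution_follow_chi-square}), and then read off the full density through the general formula for spherically distributed vectors in \Cref{thm: pdf_of_x_Gupta}. The only genuinely computational ingredient is converting the scale-free norm statement into the density of the radius $r := \|\mvec{y}_2\|$ itself.

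First I would record that $\mvec{y}_2$ is spherical and that $\prob[\mvec{y}_2 = \mvec{0}] = 0$: indeed, by \Cref{lemma: inverse_min_norm_solution_follow_chi-square} the ratio $\|\mvec{z}\|^2/\|\mvec{y}_2\|^2$ is an almost surely finite $\chi^2_n$ variable, so $\|\mvec{y}_2\| > 0$ almost surely. Hence by \Cref{lemma: x=ru} and \Cref{thm: Fang1990_r=x_and_u=x^-1x} we may write $\mvec{y}_2 \stackrel{law}{=} r\mvec{u}$ with $r = \|\mvec{y}_2\|$ independent of a uniformly distributed direction $\mvec{u}$, which is precisely the setting in which \Cref{thm: pdf_of_x_Gupta} applies.

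Next I would determine the radial density $h$ of $r$. Setting $R_0 := \|\mvec{y}_2\|/\|\mvec{z}\|$, \Cref{lemma: inverse_min_norm_solution_follow_chi-square} gives $R_0^2 \sim 1/\chi^2_n$, so $R_0 = \sqrt{Y}$ with $Y \sim 1/\chi^2_n$ and \Cref{lemma: pdf_of_sqrt_Inv_Chi} supplies its density. A one-line change of variables $r = \|\mvec{z}\|\,R_0$ then yields
\[
h(\hat r) = \frac{2^{-n/2+1}}{\Gamma(n/2)}\,\|\mvec{z}\|^{n}\,\hat r^{-n-1}\,e^{-\|\mvec{z}\|^2/(2\hat r^2)}.
\]

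Finally I would substitute this $h$ into \Cref{thm: pdf_of_x_Gupta}, applied in dimension $d$ (so that the prefactor is $\Gamma(d/2)/(2\pi^{d/2})$ and the radial weight is $\|\hat{\mvec{y}}\|^{1-d}$). Collecting constants via $\tfrac12 \cdot 2^{-n/2+1} = 2^{-n/2}$ to form $(\|\mvec{z}\|/\sqrt{2})^n$, and merging the powers $\hat r^{-n-1}\cdot\|\hat{\mvec{y}}\|^{1-d} = (\hat{\mvec{y}}^T\hat{\mvec{y}})^{-(n+d)/2}$, gives exactly the claimed $g(\hat{\mvec{y}})$. The main thing to get right is the bookkeeping in this last substitution together with the scaling of $r$ in the previous step; there is no conceptual obstacle, since spherical symmetry reduces the $d$-dimensional density entirely to its one-dimensional radial profile.
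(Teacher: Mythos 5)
Your proposal is correct and follows essentially the same route as the paper's proof: spherical symmetry of $\mvec{y}_2$, identification of the radial law via \Cref{lemma: inverse_min_norm_solution_follow_chi-square} and \Cref{lemma: pdf_of_sqrt_Inv_Chi}, and substitution into \Cref{thm: pdf_of_x_Gupta}. The only (immaterial) difference is bookkeeping: you absorb the $\|\mvec{z}\|$-scaling into the one-dimensional radial density before applying \Cref{thm: pdf_of_x_Gupta}, whereas the paper first treats the case $\|\mvec{z}\|=1$ and recovers the general case at the end by a $d$-dimensional change of variables with Jacobian $1/\|\mvec{z}\|^d$.
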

\begin{proof}
	The fact that $\mvec{y}_2$ has a spherical distribution is a key ingredient in the proof. To simplify the derivations, let us assume for now that $\| \mvec{z} \| = 1$. 
	
	By \Cref{lemma: x=ru}, 
	$$ \mvec{y}_2 \stackrel{law}{=} r\mvec{u},  $$
	where $r$ is a univariate random variable and where $\mvec{u}$ is a random vector distributed uniformly on $S^d$; moreover, $r$ and $\mvec{u}$ are independent. 
	
	Our first goal is to show that $r$ and $\| \mvec{y}_2 \|$ have the same distribution. This fact follows immediately from \Cref{thm: Fang1990_r=x_and_u=x^-1x} if we show that $\prob[\mvec{y}_2 = \mvec{0}] = 0$. Let $W \sim 1/\chi^2_{d-d_e+1}$. According to \Cref{lemma: inverse_min_norm_solution_follow_chi-square}, we have 
	\begin{equation}\label{eq: y_distributed_as_W}
	\| \mvec{y}_2 \|^2 \stackrel{law}{=} W,
	\end{equation}
	which we use in the second equation below
	$$ \prob[\mvec{y}_2 = \mvec{0}] = \prob[\| \mvec{y}_2 \|^2 = 0] = \prob[W = 0]. $$
	Since $W$ is a continuous random variable, $ \prob[W = 0] = 0$. This proves that
	\begin{equation}\label{eq: r_distributed_as_y}
	r \stackrel{law}{=} \|\mvec{y}_2 \|.
	\end{equation}
	
	Combining \eqref{eq: y_distributed_as_W} and \eqref{eq: r_distributed_as_y}, we conclude that $r$ has the same distribution as $W^{1/2}$. The probability density function of $W^{1/2}$ --- and, consequently, of $r$ --- derived in \Cref{lemma: pdf_of_sqrt_Inv_Chi} is given by
	\begin{equation} \label{eq: pdf_of_r}
	h(\hat{r}) = \frac{2^{-n/2+1}}{\Gamma(n/2)} \hat{r}^{-n-1} e^{-1/(2\hat{r}^2)}.
	\end{equation}
	\Cref{thm: pdf_of_x_Gupta} allows us to express the p.d.f. of $\mvec{y}_2$ in terms of the p.d.f. of $r$:
	$$ g(\hat{\mvec{y}}) = \frac{\Gamma(d/2)}{2\pi^{d/2}} (\hat{\mvec{y}}^T\hat{\mvec{y}})^{(1-d)/2} h\big(\sqrt{\hat{\mvec{y}}^T \hat{\mvec{y}}}\big). $$
	By using \eqref{eq: pdf_of_r} for $h(\cdot)$ in the above, we obtain
	\begin{equation}\label{eq: g_pdf_of_y_z=1}
	g (\hat{\mvec{y}}) = 2^{-n/2}\pi^{-d/2} \frac{\Gamma(d/2)}{\Gamma(n/2)} (\hat{\mvec{y}}^T\hat{\mvec{y}})^{-(n+d)/2} e^{-1/(2\hat{\mvec{y}}^T\hat{\mvec{y}})}.
	\end{equation}
	
	To derive the p.d.f. for arbitrary non-zero $\mvec{z}$, we consider the linear transformation $\bar{\mvec{y}} = \| \mvec{z} \| \hat{\mvec{y}}$. The Jacobian of the transformation is equal to $1/\|\mvec{z}\|^d$. Thus, the p.d.f. $\bar{g}(\bar{\mvec{y}})$ of $\bar{\mvec{y}}$ satisfies
	$$ \bar{g}(\bar{\mvec{y}}) = \frac{g(\bar{\mvec{y}}/\| \mvec{z} \|)}{\| \mvec{z} \|^d}, $$
	which together with \eqref{eq: g_pdf_of_y_z=1} yields the desired result.
\end{proof}

\section{Problem set}  \label{app: Test set}
\Cref{table: Test set} contains the explicit formula, domain and global minimum of the functions used to generate the high-dimensional test set. The problem set contains 19 problems taken from \cite{AMPGO, Ernesto2005, Bingham2013}. Problems that cannot be solved by BARON are marked with `$^*$'. Problems that will not be solved by KNITRO are marked with `$^\circ$'.
\begin{table}[!ht]
	\centering
	\caption{The problem set listed in alphabetical order.}
	\label{table: Test set}
	\begin{tabular} {|L{4.1cm} | C{3cm} | C{3.3cm} | }
		\hline
		Function & Domain  & Global minima  \\ \hline
		1) Beale \cite{Ernesto2005}    & $\mvec{x} \in [-4.5,4.5]^2$ &  $g(\mvec{x}^*) = 0$ \\ \hline
		2) Branin$^*$ \cite{Ernesto2005}    & \pbox{20cm}{
			$x_1 \in [-5,10]$ \\ $x_2 \in [0, 15]$} 
		&
		$g(\mvec{x}^*) = 0.397887$ \\ \hline
		
		3) Brent \cite{AMPGO}  & $\mvec{x} \in [-10,10]^2$ & $g(\mvec{x}^*) = 0$  \\ \hline
		
		4) $^\circ$Bukin N.6   \cite{Bingham2013}   & \pbox{20cm}{ $ x_1 \in [-15,-5]$ \\ $x_2 \in [-3,3]$} & $g(\mvec{x}^*) = 0$ \\ \hline
		
		5) $^*$Easom  \cite{Ernesto2005} & $\mvec{x}\in [-100,100]^2$   & $g(\mvec{x}^*) = -1$ \\ \hline
		
		6) Goldstein-Price \cite{Ernesto2005} & $\mvec{x} \in [-2,2]^2 $  & $g(\mvec{x}^*) = 3$ \\ \hline
		
		7) Hartmann 3 \cite{Ernesto2005} & $\mvec{x} \in [0,1]^3$ & $g(\mvec{x}^*) = -3.86278$
		\\ \hline
		
		8) Hartmann 6 \cite{Ernesto2005}  & $\mvec{x} \in [0,1]^6$  & $g(\mvec{x}^*) = -3.32237$
		\\ \hline
		
		9) $^*$Levy \cite{Bingham2013}  & $\mvec{x} \in [-10,10]^4$ & $g(\mvec{x}^*) = 0$  \\ \hline
		
		10) Perm 4, 0.5 \cite{Bingham2013} & $\mvec{x} \in [-4,4]^4$ & $g(\mvec{x}^*) = 0$  \\ \hline
		
		11) Rosenbrock \cite{Bingham2013}   & $\mvec{x} \in [-5,10]^3$ & $g(\mvec{x}^*) = 0$  \\ \hline
		
		12) Shekel $5$ \cite{Bingham2013}  & $\mvec{x} \in [0,10]^4$ & $ g(\mvec{x}^*) = -10.1532$
		\\ \hline
		
		13) Shekel $7$ \cite{Bingham2013}  & $\mvec{x} \in [0,10]^4$ & $g(\mvec{x}^*) = -10.4029$
		\\ \hline
		
		14) Shekel $10$ \cite{Bingham2013} & $\mvec{x} \in [0,10]^4$ & $g(\mvec{x}^*) = -10.5364$
		\\ \hline
		
		15) $^*$Shubert \cite{Bingham2013}  & $\mvec{x} \in [-10,10]^2$ & $g(\mvec{x}^*) = -186.7309$ \\ \hline
		
		16) Six-hump camel \cite{Bingham2013} & \pbox{20cm}{$x_1 \in [-3,3]$ \\ $x_2 \in [-2,2]$} & $g(\mvec{x}^*) = -1.0316$ \\ \hline
		
		17) Styblinski-Tang \cite{Bingham2013}   & $\mvec{x} \in [-5,5]^4$ & $g(\mvec{x}^*) = -156.664$  \\ \hline
		
		18) Trid  \cite{Bingham2013} & $\mvec{x} \in [-25,25]^5$ & $g(\mvec{x}^*) = -30$ \\ \hline
		
		19) Zettl \cite{Ernesto2005}   & $\mvec{x} \in [-5,5]^2$ & $g(\mvec{x}^*) = -0.00379$  \\ \hline
	\end{tabular}
\end{table}


\newpage

\newpage
\section{Additional experiments} \label{app: more_experiments}
We conducted three more experiments to test REGO's robustness to changes in the parameters. In all three experiments, the same budget and termination criteria as in the main experiment are used.
\begin{enumerate}[label={(\Alph*)}]
	\item In this experiment, we assume that no good estimate for $\mu$ is known and that $\mu$ can be as large as $\sqrt{D}$ (for example, when $\mathcal{X} = [-1, 1]^D$ constraint is imposed). We test REGO with the following parameters: $( d_e, 8.0\times \sqrt{D} ), (d_e+1, 2.2 \times \sqrt{D}), (d_e+2, 1.3 \times \sqrt{D})$ and $(d_e+3, 1.0 \times \sqrt{D})$. Results are presented in \Cref{fig: REGO_size_D} in Appendix \ref{app: more_experiments}. 
	\item We fix $\delta$ to be 7.5$\sqrt{d_e}$ and vary $d$. The following parameters are used: $( d_e, 7.5\times \sqrt{d_e} ), (d_e+1, 7.5 \times \sqrt{d_e}), (d_e+2, 7.5 \times \sqrt{d_e})$ and $(d_e+3, 7.5 \times \sqrt{d_e})$. Results are presented in \Cref{fig: REGO_fixed_delta} in Appendix \ref{app: more_experiments}.
	\item We fix $d = d_e+1$ and vary $\delta$ $(= 5\sqrt{d_e},7.5\sqrt{d_e},10\sqrt{d_e})$. The following parameters are used: $( d_e + 1, 5\times \sqrt{d_e} ), (d_e+1, 7.5 \times \sqrt{d_e})$ and  $(d_e+1, 10 \times \sqrt{d_e})$. In the figures we also include curves for $\delta_{opt} = 2.2\sqrt{d_e}$ taken from the main experiment. Results are presented in \Cref{fig: REGO_d=1} in Appendix \ref{app: more_experiments}.
\end{enumerate}
\subsubsection*{Conclusions}
\begin{enumerate}[label={(\Alph*)}]
	\item We test robustness of REGO assuming that $\mu$ is equal to $\sqrt{D}$ (which makes $\delta$ to be relatively large and dependent on $D$). Despite this dependence, the frequency of convergence for BARON and KNITRO is high showing mild dependence on $D$.
	
	\item The purpose of this experiment is to see how different values of $d$ affect the performance of REGO while $\delta$ is kept constant. For larger $d$, we expect \eqref{eq: REGO} to be successful with higher chance. Nonetheless, the results show that sometimes, for larger $d$, REGO's performance may be compromised; this is for example true for BARON's convergence$_{opt}$. Since $\delta$ is set to a relatively large value, \eqref{eq: REGO} is successful with high probability even for smallest $d$. This suggests that as long as $d$ and $\delta$ produce relatively high chance of success of \eqref{eq: REGO}, one should stop increasing their values lest convergence to the global minimum require larger computational resources. 
	
	\item In this experiment, we apply REGO with different values of $\delta$ while keeping $d$ constant. The results display no significant differences between the performances with different parameters. Even the results with the optimal $\delta$ (used in the main experiment) do not differ considerably from the one with the largest $\delta$ except for BARON where the former wins in terms of convergence$_{opt}$ and CPU time. The results of this experiment together with the results in (B) indicate that it is better to increase $\delta$ and keep $d$ constant if one wants to increase success of \eqref{eq: REGO} with minimal increase in computational cost. 
\end{enumerate}
\begin{figure}[H]
	\centering
	a) DIRECT \par\medskip
	\includegraphics[scale=0.6]{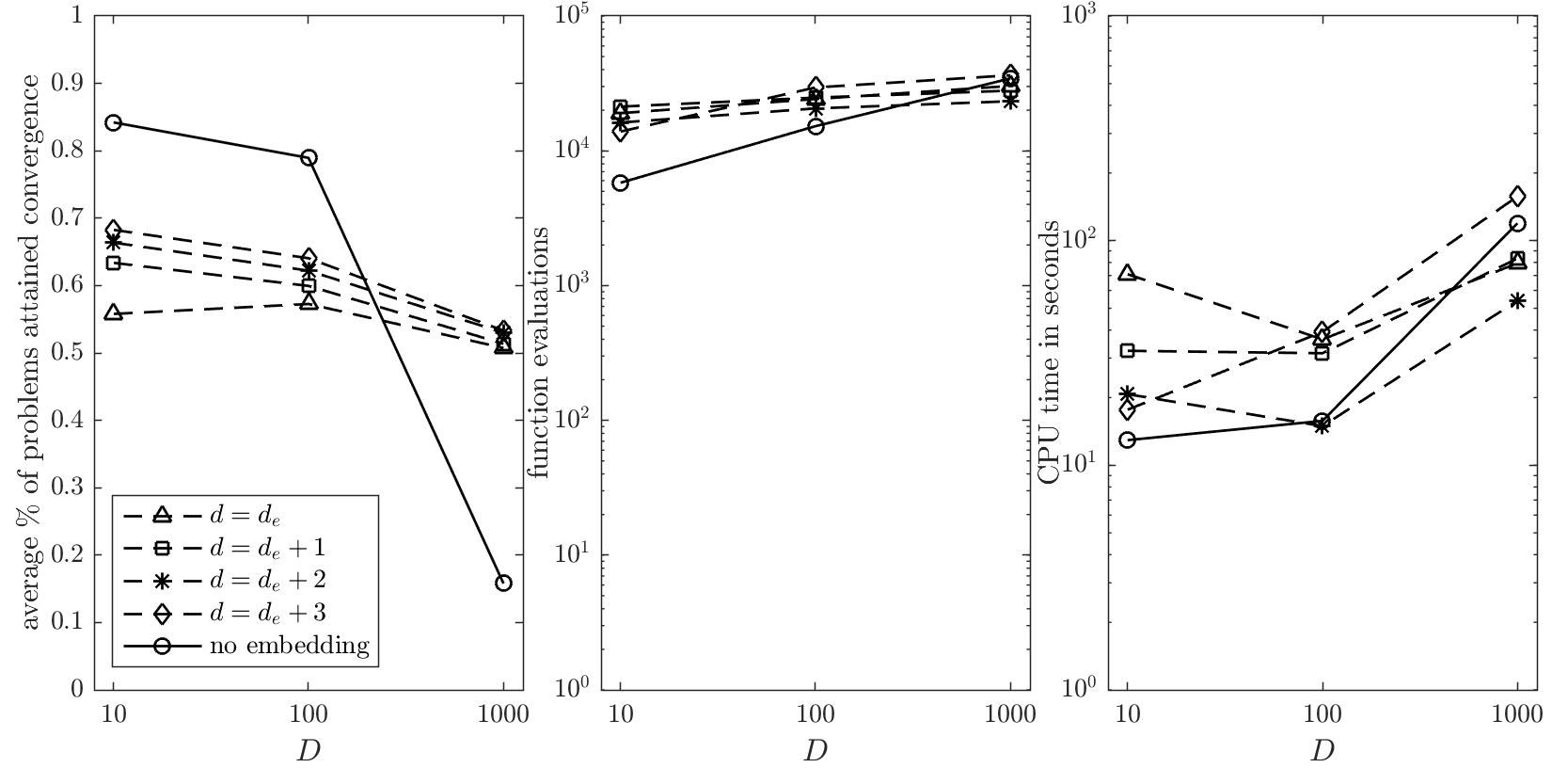} \\
	b) BARON \par\medskip
	\includegraphics[scale=0.6]{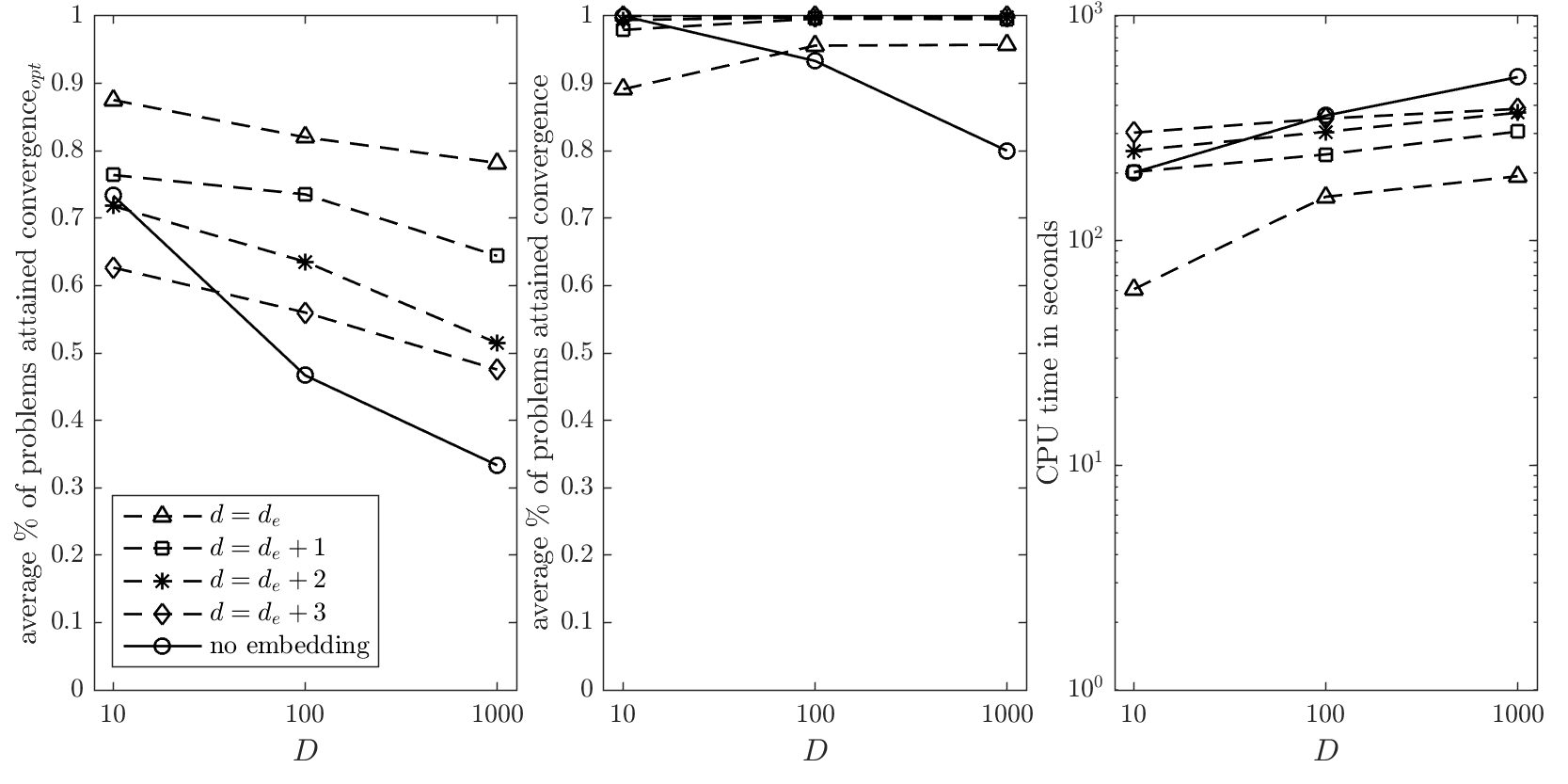} \\
	c) KNITRO \par\medskip
	\includegraphics[scale=0.6]{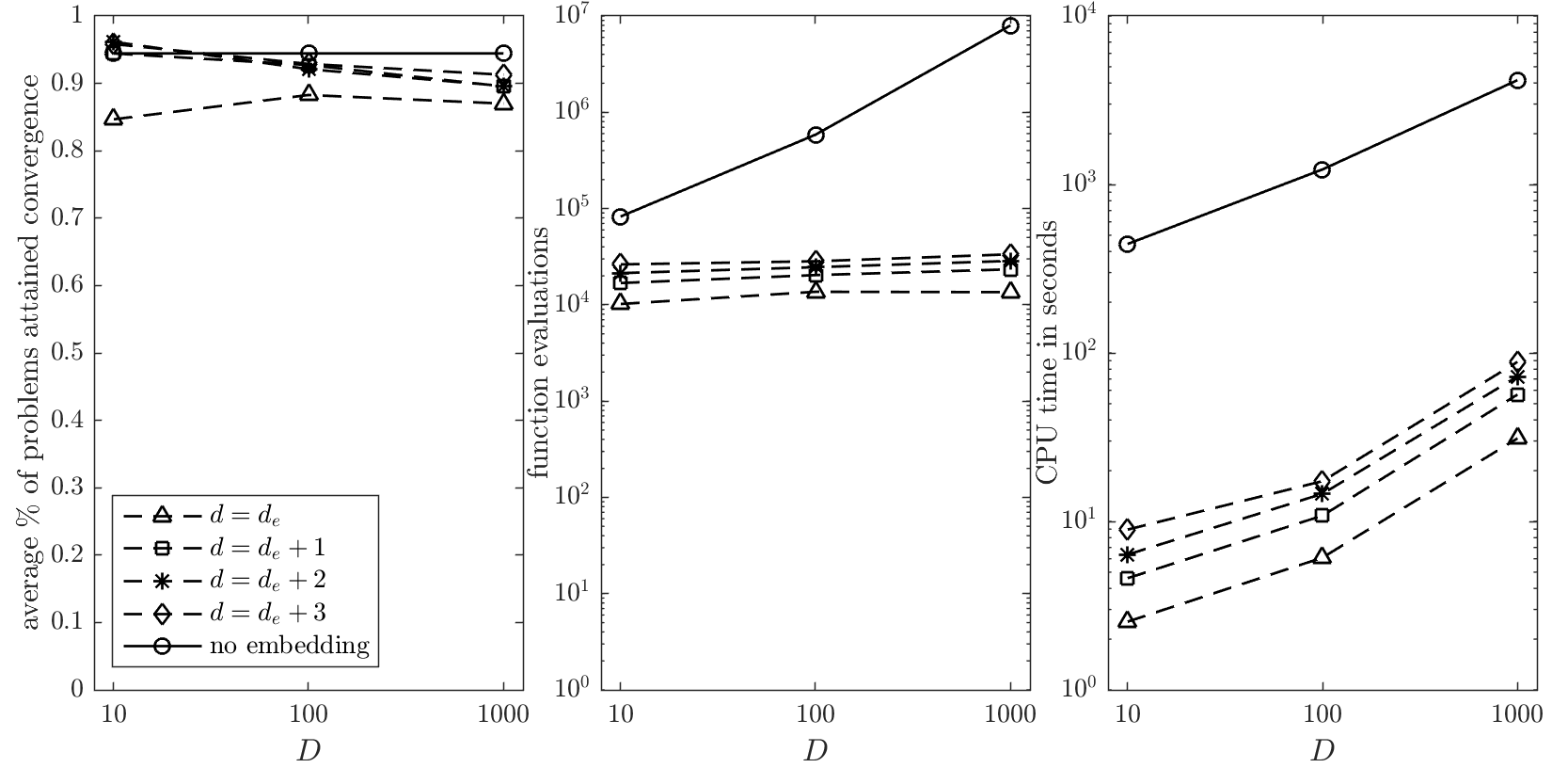}
	\caption{Experiment A: REGO with DIRECT, BARON and KNITRO with $(d,\delta) =  ( d_e, 8.0\times \sqrt{D} ), (d_e+1, 2.2 \times \sqrt{D}), (d_e+2, 1.3 \times \sqrt{D})$ and $(d_e+3, 1.0 \times \sqrt{D})$.}
	\label{fig: REGO_size_D}
\end{figure}
\begin{figure}[H]
	\centering
	a) DIRECT \par\medskip
	\includegraphics[scale=0.6]{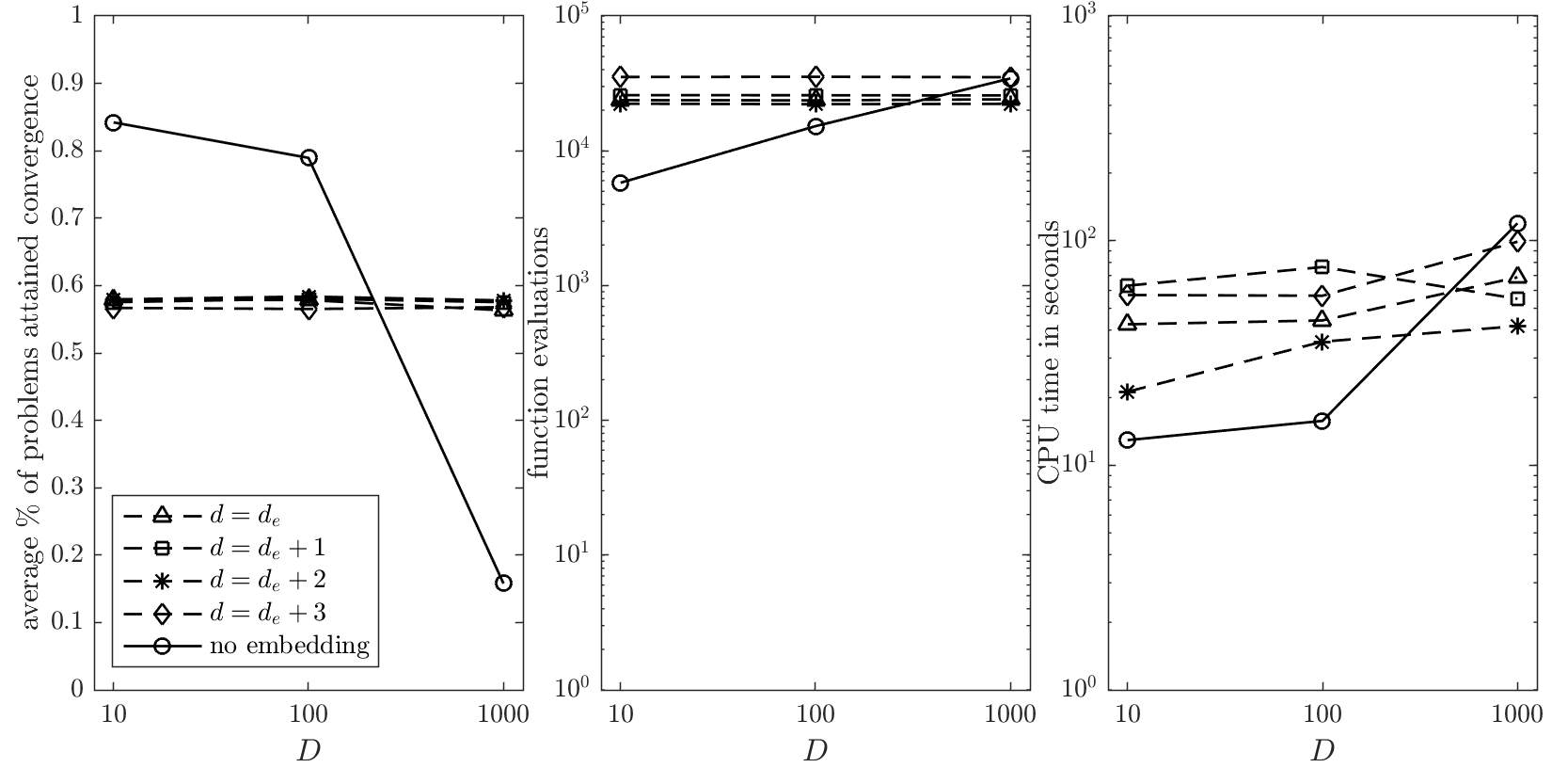} \\
	b) BARON \par\medskip
	\includegraphics[scale=0.6]{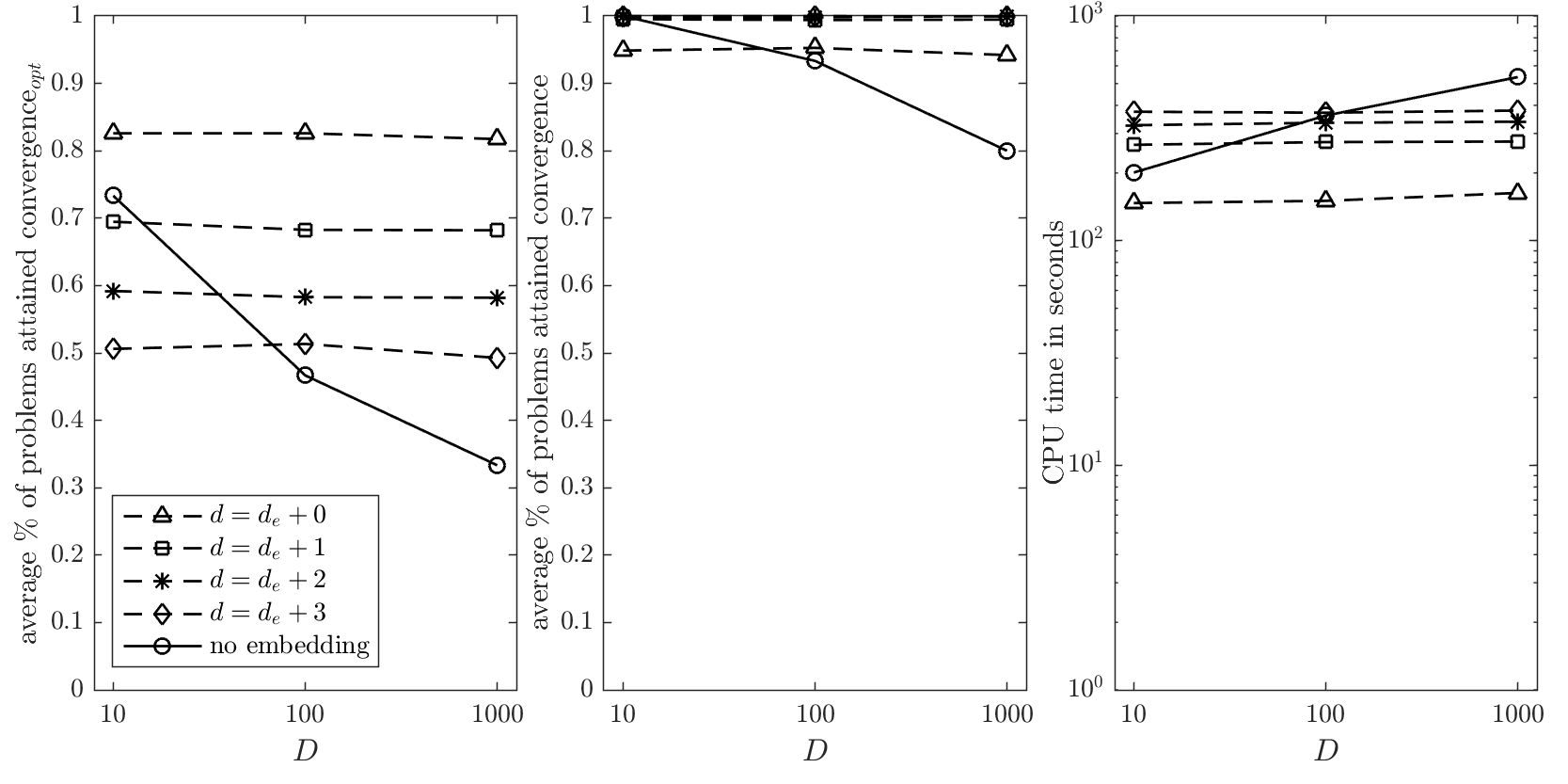} \\
	c) KNITRO \par\medskip
	\includegraphics[scale=0.6]{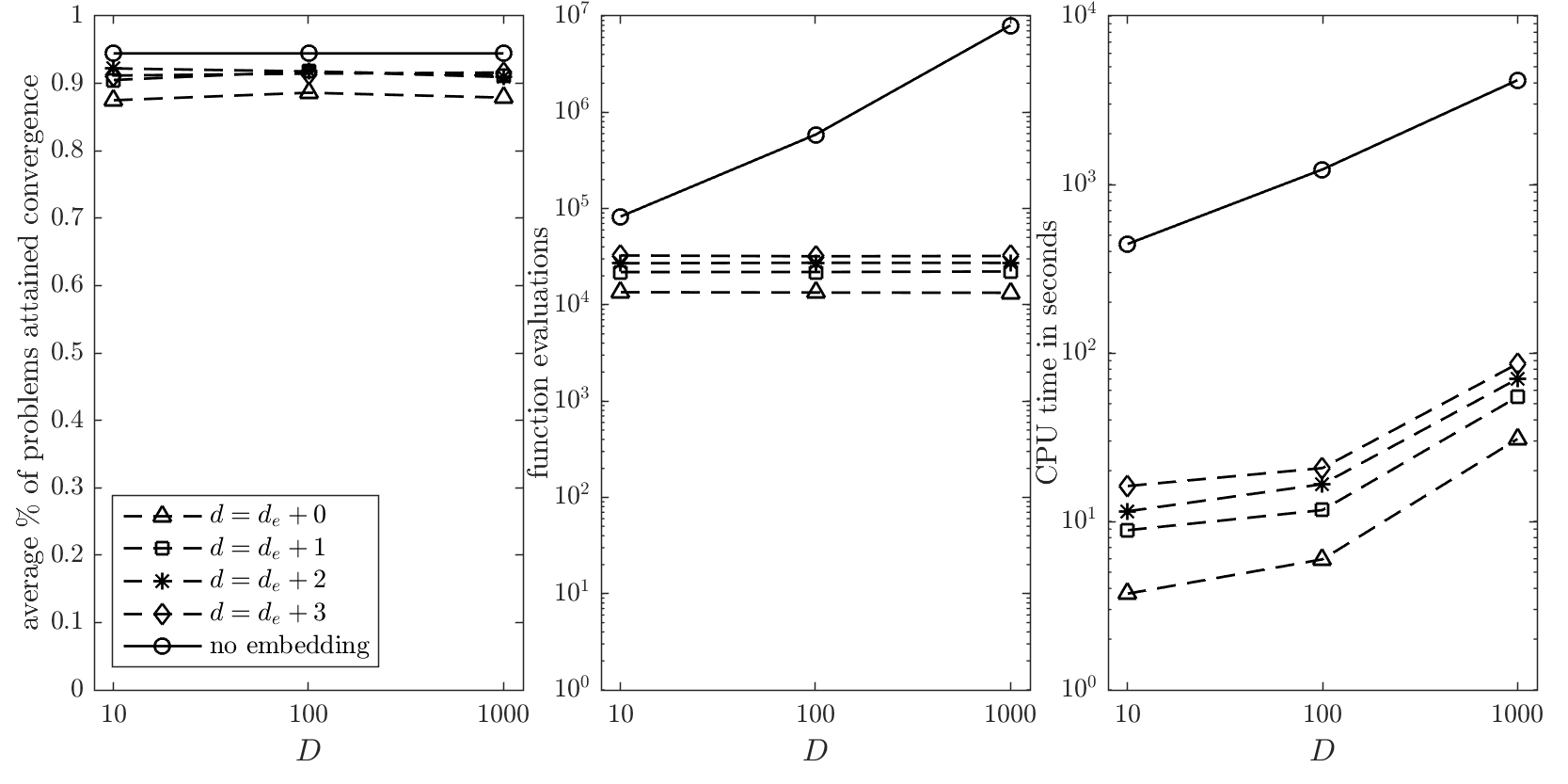}
	\caption{Experiment B: REGO with DIRECT, BARON and KNITRO with $\delta = 7.5\sqrt{d_e}$ fixed and $d = d_e$, $d_e + 1$, $d_e + 2$ and $d_e+3$.}
	\label{fig: REGO_fixed_delta}
\end{figure}
\begin{figure}[H]
	\centering
	a) DIRECT \par\medskip
	\includegraphics[scale=0.6]{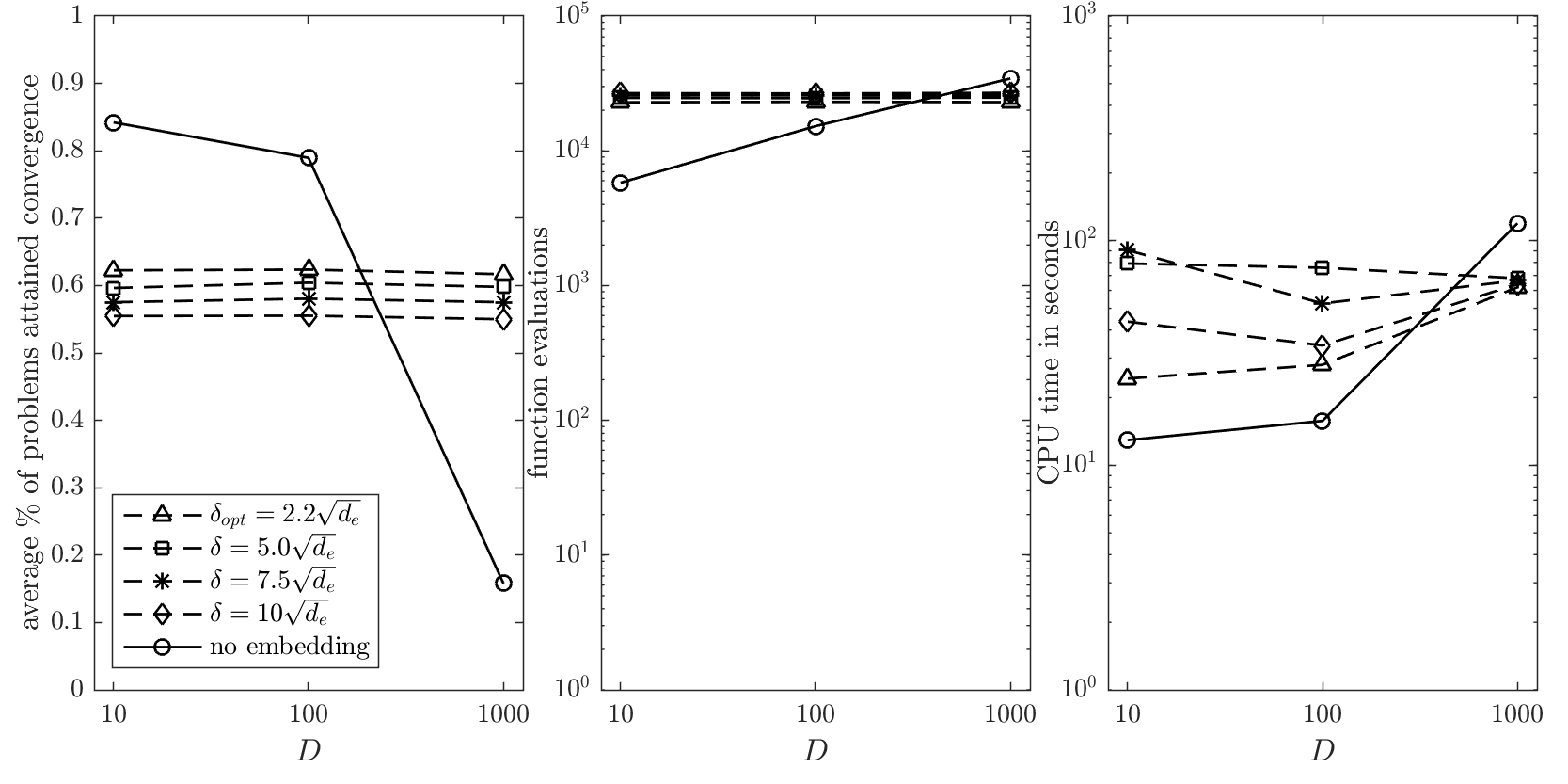} \\
	b) BARON \par\medskip
	\includegraphics[scale=0.6]{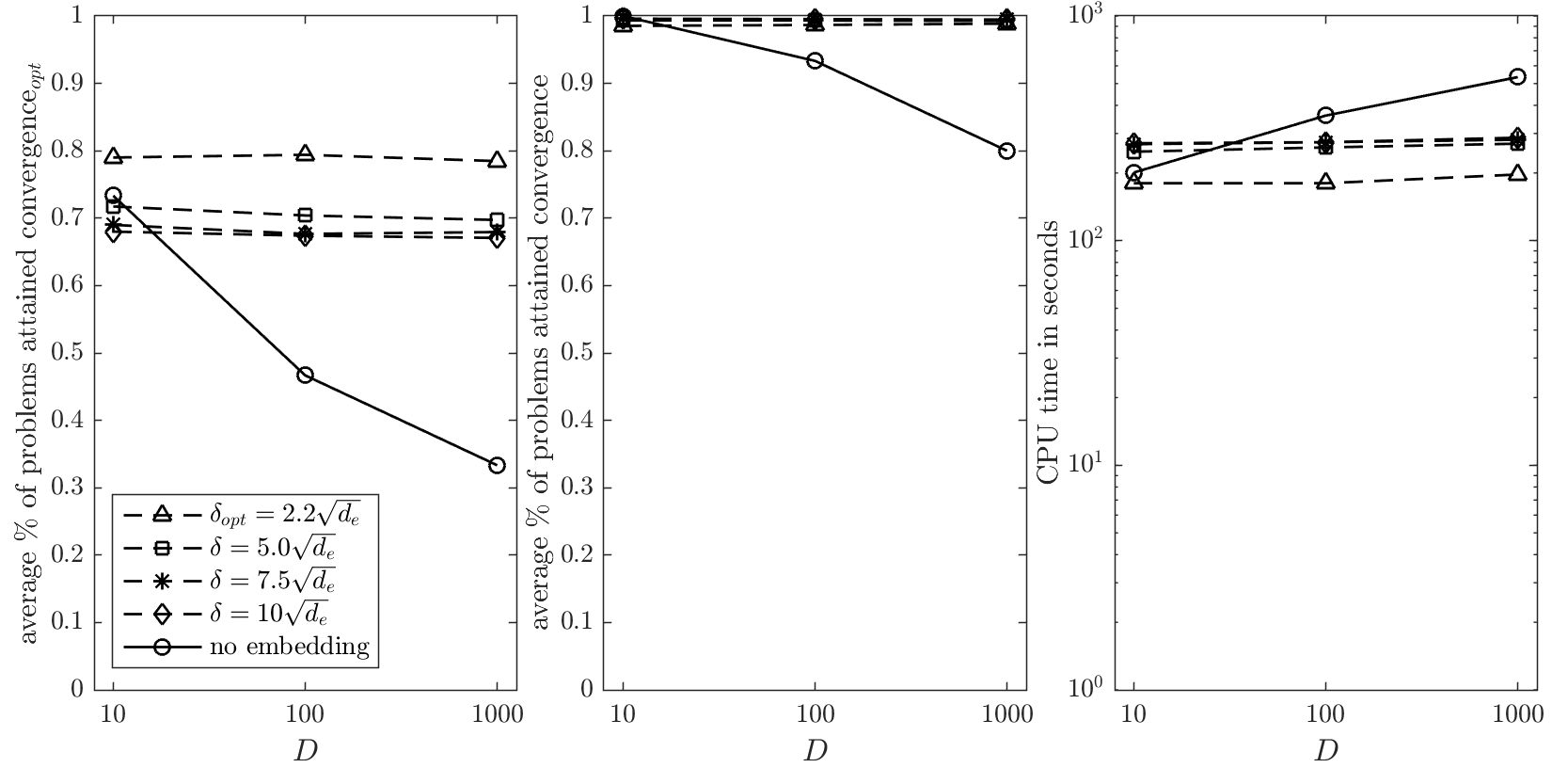} \\
	c) KNITRO \par\medskip
	\includegraphics[scale=0.6]{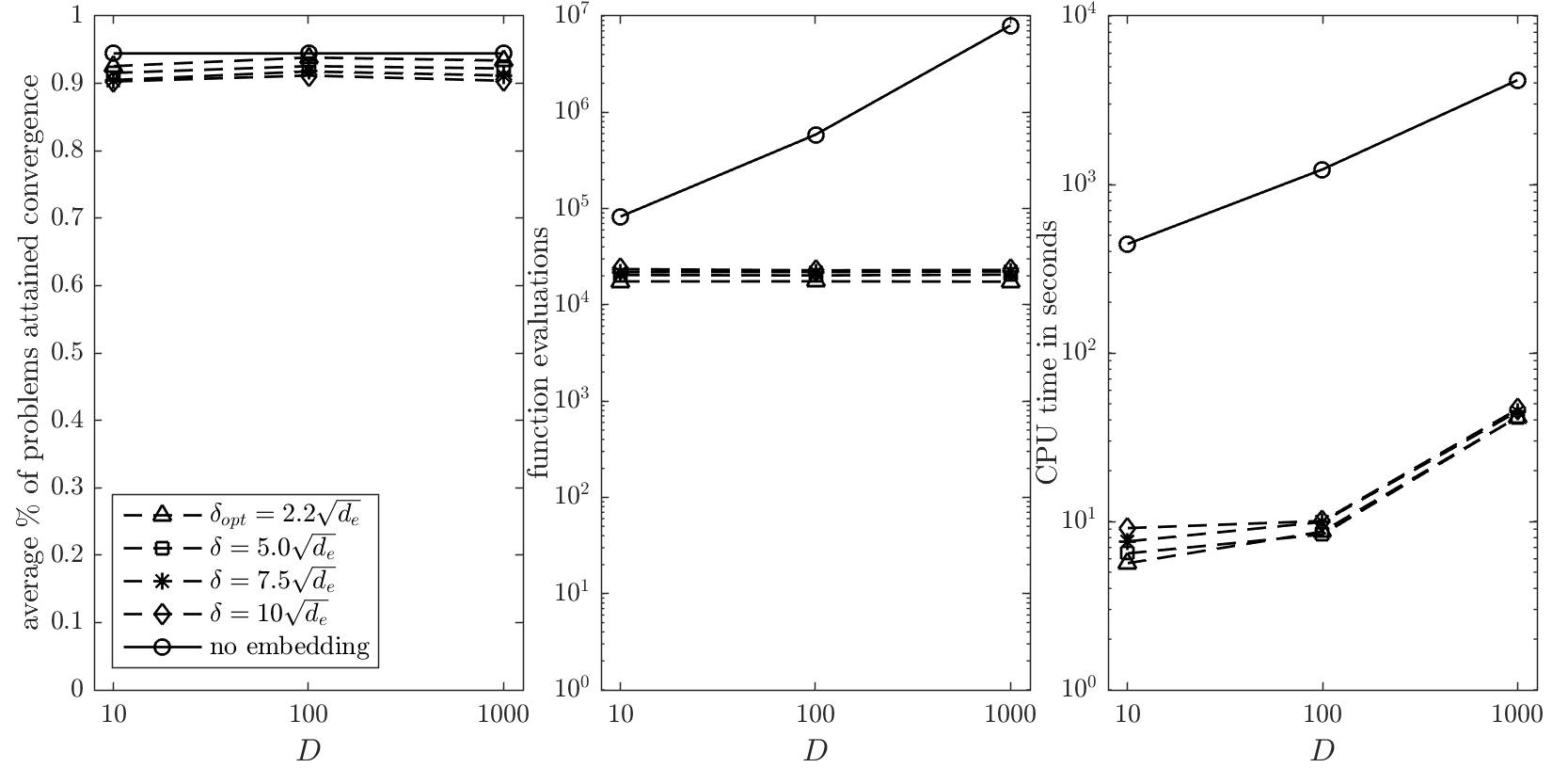}
	\caption{Experiment C: REGO with DIRECT, BARON and KNITRO with $d = d_e + 1$ fixed and $\delta = 5\sqrt{d_e}, 7.5\sqrt{d_e},10\sqrt{d_e}$ and $2.2\sqrt{d_e}$  $(\delta_{opt})$.}
	\label{fig: REGO_d=1}
\end{figure}

\end{document}